\renewcommand{\@pnumwidth}{1.5em}  
\def\l@section      {\@tocline{1}{0.6em}{0em}{}{}}  
\def\l@subsection   {\@tocline{2}{0.3em}{2em}{}{}}  
\def\l@subsubsection{\@tocline{3}{0.2em}{4.7em}{}{}}  
\newtheorem{theorem}{Theorem}[section]
\newtheorem{proposition}[theorem]{Proposition}
\newtheorem{lemma}[theorem]{Lemma}
\newtheorem{definition}[theorem]{Definition}
\newtheorem{corollary}[theorem]{Corollary}
 \numberwithin{equation}{section}
  \newcommand{\dxu}{\textnormal{d}\mu}
  \newcommand{\dxe}{\textnormal{d}x}
  \newcommand{\dye}{\textnormal{d}y}
 \newcommand{\dt}{\textnormal{d} t}
\newcommand{\dr}{\textnormal{d}r}
\newcommand{\dhn}{\textnormal{d}\mathcal{H}^{n-1}} 
\newcommand{\tl}{T}
\newcommand{\pl}{{\bf q}} 
 \newcommand{\spt}{\operatorname{supp}}
\newcommand{\dist}{\operatorname{dist}}
\newcommand{\loc}{\operatorname{loc}}
\newcommand{\osc}{\operatorname{osc}}
\newcommand{\essinf}{{\operatorname{ess} \inf}}
\newcommand{\esssup}{{\operatorname{ess} \sup}}
\newcommand{\dx}{\textnormal{d}\mathtt{V} _{\tilde \beta}}
\newcommand{\dmva}{\textnormal{d}\mathtt{V} _1}
\newcommand{\dmvb}{\textnormal{d}\mathtt{V} _2}
\newcommand{\mv}{\mathtt{V}}
\newcommand{\mva}{\mathtt{V}_1}
\newcommand{\mvb}{\mathtt{V}_2}
\newcommand{\mvbt}{\mathtt{V}_{\tilde \beta}}
\newcommand{\mvbb}{\mathtt{V}_{\beta}}
\newcommand{\mvbd}{\mathtt{V}_{2\beta}}
\newcommand{\dmvbd}{\textnormal{d}\mathtt{V} _{2\beta}}
\newcommand{\bbt}{\tilde \beta}
\newcommand{\rep}{\bar{p}}
\newcommand{\dmvbb}{\textnormal{d} \mathtt{V}_{ \beta}}
\title{Local Hölder Regularity for Quasilinear Elliptic Equations with Mixed Local-Nonlocal Operators, Variable Exponents, and Weights}
\author{Juan Pablo Alcon Apaza}
\address{Departamento de Matem\'atica, Universidade Federal de São Carlos, 13565-905, São Carlos--SP, Brazil}
\email{juanpabloalconapaza@gmail.com}
\begin{document}

\maketitle

\begin{abstract}
We establish local boundedness and local Hölder continuity of weak solutions to the following prototype problem:
$$
- \operatorname{div}\left(|x|^{-2\beta}|\nabla u|^{\pl-2} \nabla u \right) + (-\Delta)_{\Omega , p (\cdot , \cdot), \beta}^{s(\cdot , \cdot)} u= 0 \quad \text { in } \quad \Omega,
$$
where $\Omega \subset \mathbb{R}^n$, $n \geq 2$, is a bounded domain. The nonlocal operator is defined by
$$
(-\Delta)_{\Omega , p(\cdot , \cdot) , \beta}^{s(\cdot , \cdot)}  u(x) :=\operatorname{P.V.} \int_{\Omega} \frac{|u(x)-u(y)|^{p(x,y)-2}(u(x)-u(y))}{|x-y|^{n+ s(x,y) p(x,y)}} \cdot \frac{1}{|x|^\beta|y|^\beta} \,  \dye.
$$ 
Here, $p : \Omega \times \Omega \rightarrow (1, \infty)$ and $s : \Omega \times \Omega  \rightarrow (0,1)$ are measurable functions, $\pl := \esssup _{\Omega \times \Omega} \ p$, and $0 \leq 2\beta < n$. Our approach is analytic and relies on an adaptation of the De Giorgi–Nash–Moser theory to a mixed local–nonlocal framework with variable exponents and weights.
\end{abstract}

\let\thefootnote\relax\footnote{2020 \textit{Mathematics Subject Classification}. 35B45; 35B65; 35D30; 35J92; 35R11}
\let\thefootnote\relax\footnote{\textit{Keywords and phrases}. Regularity; mixed local and nonlocal Laplacian; local boundedness; Hölder continuity; variable exponent; weight}


\markright{\uppercase{Local Hölder Regularity for Quasilinear Elliptic Equations with Mixed Local and Nonlocal Operators}}

\tableofcontents

\section{Introduction}

In this article, we study the local boundedness and local Hölder continuity of weak solutions to the following problem:
\begin{equation} \label{4}
-\Delta_{\pl , 2\beta} u+\mathcal{L}_{s(\cdot , \cdot),p(\cdot , \cdot),\beta}(u)=0 \quad \text { in }  \quad \Omega ,
\end{equation}
where $\Omega$ is a bounded domain in $\mathbb{R}^n$ with $n \geq 2$. The functions $ p : \Omega \times \Omega \rightarrow (1, \infty)$ and $s : \Omega \times \Omega \rightarrow (0,1)$ are measurable. We denote $\mathbf{q} := \esssup _{\Omega \times \Omega} \, p$, and assume $0 \leq 2\beta < n$.

The {\it local weighted $\pl$-Laplace operator} is defined by
\begin{equation*}
-\Delta_{\pl ,2 \beta} u =-\operatorname{div}\left(|x|^{-2\beta}|\nabla u|^{\pl-2} \nabla u \right),
\end{equation*}
and  {\it the nonlocal weighted regional $s(x,y)$-fractional  $p(x,y)$-Laplace operator} is given by
\begin{equation*}
\mathcal{L}_{s (\cdot , \cdot),p(\cdot , \cdot),\beta}(u)(x)=\operatorname{P.V.} \int_{\Omega}|u(x)-u(y)|^{p(x,y)-2}(u(x)-u(y))  \frac{K(x, y)}{|x|^\beta|y|^\beta}   \, \dye, 
\end{equation*}
where $\operatorname{P.V.}$ denotes the principal value, and
\begin{enumerate}[label=$(H_{\arabic*})$]
\item \label{162} $K : \Omega \times \Omega \rightarrow \mathbb{R}$ is a measurable function satisfying $K(x,y)=K(y,x)$, and
\begin{equation*}
\frac{\Lambda^{-1}}{|x-y|^{n+s(x,y)p(x,y) }} \leq K(x, y) \leq \frac{\Lambda}{|x-y|^{n+s(x,y)p(x,y) }}, \quad \text { a.e. } (x,y) \in \Omega \times \Omega,
\end{equation*}
for some $\Lambda \geq 1$. 

\item \label{163} The functions $s$, $p$ satisfy $s(x,y)=s(y,x)$ and $p(x,y)=p(y,x)$ for every $(x,y)$, and 
$$
 0<s^- := \underset{\Omega \times \Omega}{\essinf} \, s  \leq \underset{\Omega \times \Omega}{\esssup} \, s =: s^+ <1
$$
and
$$
1<p^- := \underset{\Omega \times \Omega}{\essinf} \, p \leq \underset{\Omega   \times \Omega}{\esssup} \, p  =: \pl <\infty .
$$
\end{enumerate}

The definition of $\mathcal{L}_{s(\cdot, \cdot), p(\cdot, \cdot), \beta}$ is motivated by the {\it regional fractional $p$-Laplace operator}:
$$
(-\Delta)_{\Omega, p}^s u(x):=\lim _{\epsilon \downarrow 0}(-\Delta)_{\Omega, p, \epsilon}^s u(x), \quad x \in \Omega,
$$
where 
$$
(-\Delta)_{\Omega, p, \epsilon}^s u(x):=C_{n, p, s} \int_{\{y \in \Omega \:|\: |y-x|> \epsilon\}}|u(x)-u(y)|^{p-2} \frac{u(x)-u(y)}{|x-y|^{n+p s}} \, \dye ,
$$
and
$$
C_{n, p, s}=\frac{s 2^{2 s} \Gamma\left(\frac{p s+p+n-2}{2}\right)}{\pi^{\frac{n}{2}} \Gamma(1-s)}.
$$
For details on this operator, see \cite{zbMATH05159612, zbMATH06406292, zbMATH06582990}.

\subsection{Context and Related Work}

$ $

\medskip

\noindent {\it {On the Local or Nonlocal Problems.}} Over the past two decades, there has been a surge of interest in nonlocal equations of the type
\begin{equation} \label{158}
(-\Delta)_{p(\cdot , \cdot)}^{s(\cdot , \cdot)} u(x):= \operatorname{P.V.} \int_{\mathbb{R}^n} \frac{|u(x)-u(y)|^{p(x, y)-2}(u(x)-u(y))}{|x-y|^{n+s(x, y) p(x, y)}} \, \dye =0 ,
\end{equation}
and related variational problems. We refer to the survey papers \cite{nezza2012guidefract, zbMATH06983323} and the monographs \cite{zbMATH06559661, zbMATH06533015} for an overview of the history, developments, and applications of nonlocal problems. In the context of regularity theory, a foundational result was established by Caffarelli and Silvestre \cite{zbMATH05204436}, who proved a Harnack inequality for the fractional Laplace equation using an extension argument. This pioneering work spurred rapid developments in the regularity theory for nonlocal equations of fractional Laplacian type, as seen in \cite{zbMATH02111870, zbMATH02232666, zbMATH05913780, zbMATH05551486, zbMATH05952965, zbMATH05223048, zbMATH05373494, zbMATH06435282, zbMATH06568053, zbMATH05049012}. Notably, Caffarelli, Chan, and Vasseur \cite{zbMATH05913780} adapted De Giorgi's approach to nonlocal equations.

For the fractional $p$-Laplace equation, Di Castro, Kuusi, and Palatucci \cite{zbMATH06330975, di2016localfract} established Harnack's inequality and Hölder continuity of weak solutions using De Giorgi's method. A key innovation in their work was the introduction of the {\it nonlocal tail}, which plays a crucial role in regularity estimates. Further developments in this direction can be found in \cite{zbMATH07511756, zbMATH06710259,  zbMATH07309248, zbMATH07576867, zbMATH07134362, zbMATH06604339, zbMATH06610795, zbMATH06637333, zbMATH06798251, zbMATH06442824, zbMATH06699542, zbMATH07309168, zbMATH07665927}.

More recently, regularity theory has been extended to nonlocal equations with nonstandard order and exponent. Bass and Kassmann \cite{zbMATH02111870, zbMATH02232666} and Silvestre \cite{zbMATH05049012} proved Hölder continuity and Harnack's inequality for bounded solutions to linear nonlocal equations with kernels of the form $|x-y|^{- 2 s(x) }$ or $\varphi(|x-y|)^{-1}$. Related results can be found in \cite{zbMATH06535137, arXiv:1505.05498, zbMATH07058421}. Additionally, De Filippis and Palatucci \cite{zbMATH07046606} studied nonlocal equations of double-phase type and established Hölder continuity for bounded solutions.

In \cite{cheng2020variablefrac}, Cheng, Ge, and Agarwal established the global boundedness of weak solutions to nonhomogeneous $s(\cdot, \cdot)$-fractional $p(\cdot, \cdot)$-Laplace equations using De Giorgi's approach. More recently, Chaker and Kim \cite{chaker2023local} obtained local Hölder continuity for weak solutions to nonlocal problems with a variable exponent $p(\cdot, \cdot)$ and a constant order $s(\cdot, \cdot) \equiv s$. In \cite{ok2023local}, Ok proved the local boundedness and local Hölder continuity of weak solutions to nonlocal equations with both a variable exponent $p(\cdot, \cdot)$ and a variable order $s(\cdot, \cdot)$.

The local analog of this case is the $p(x)$-Laplacian problem:
$$
-\Delta_{p(x)} u := -\operatorname{div}\left(|\nabla u|^{p(x)-2} \nabla u\right) = 0,
$$
which serves as a model for equations with nonstandard growth conditions. The regularity theory for this class of problems is well established; see \cite{zbMATH01586360, zbMATH02191022, zbMATH06603041, zbMATH06591396, zbMATH01270231, zbMATH05146777, zbMATH01279360}. In particular, \cite{zbMATH01279360} shows that weak solutions are locally bounded if $p(\cdot)$ is continuous, and locally Hölder continuous if $p(\cdot)$ is log-Hölder continuous.

\medskip

\noindent {\it {On Mixed Local-Nonlocal Problems.}} For the mixed local–nonlocal case with $p=2$, i.e.,
\begin{equation} \label{159}
-\Delta u + (-\Delta)^s u = 0,
\end{equation}
Foondun \cite{zbMATH05636583} has proved the Harnack inequality and local Hölder continuity for nonnegative solutions. Barlow–Bass–Chen–Kassmann \cite{zbMATH05544580} have obtained a Harnack inequality for the parabolic problem related to \eqref{159}. Chen–Kumagai \cite{zbMATH05769037} has proved the Harnack inequality and local Hölder continuity for the parabolic version of \eqref{159}. This parabolic Harnack estimate was then used to prove the elliptic Harnack inequality for \eqref{159} by Chen–Kim–Song–Vondraček in \cite{zbMATH06191309}. For more regularity results related to \eqref{159}, we refer to Athreya–Ramachandran \cite{zbMATH06871997}, Chen–Kim–Song \cite{zbMATH05938378}, and Chen–Kim–Song–Vondraček \cite{zbMATH06122084}. The arguments in these articles combine probabilistic and analytic techniques. Moreover, the Harnack inequality is proved only for globally nonnegative solutions.

Recently, interior Sobolev regularity, the strong maximum principle, and symmetry properties—among many other qualitative features of solutions to \eqref{159}—have been studied by Biagi–Dipierro–Valdinoci–Vecchi \cite{zbMATH07502820, zbMATH07400634}, Dipierro–Proietti Lippi–Valdinoci \cite{zbMATH07544282, zbMATH07752594}, and Dipierro–Ros–Oton–Serra–Valdinoci \cite{zbMATH07514705}. There also exist regularity results for a nonhomogeneous analogue of \eqref{159}. More precisely, Athreya–Ramachandran \cite{zbMATH06871997} has proved a Harnack inequality using probabilistic and analytic methods, and the authors in \cite{zbMATH07502820, zbMATH07544282} have obtained boundedness as well as interior and boundary regularity results using analytic techniques.

Biagi–Dipierro–Valdinoci–Vecchi \cite{zbMATH07400634} have also obtained interior regularity results for a nonhomogeneous version of
\begin{equation} \label{160}
-\Delta _p u + (-\Delta _p)^s u = 0.
\end{equation}
In \cite{zbMATH08016974}, Antonini and Cozzi address some regularity issues for mixed local–nonlocal quasilinear operators modeled on the sum of a $p$-Laplacian and a fractional $(s, q)$-Laplacian, as in \eqref{160}. Under suitable assumptions on the right-hand sides and the outer data, including $\partial\Omega \in C^{1, \alpha}$ and $p > s q$, they show that weak solutions of the Dirichlet problem are $C^{1, \theta}$-regular up to the boundary and establish a Hopf-type lemma for positive supersolutions.

De Filippis and Mingione \cite{zbMATH07796271} studied minimizers of functionals of the type
$$
w \mapsto \int_{\Omega} \left(  |D w|^p - f w \right) \, \dxe + \int _{\mathbb{R}^n}\int_{ \mathbb{R}^n} \frac{|w(x) - w(y)|^\gamma}{|x - y|^{n + s \gamma}}\, \dxe \dye
$$
with $p, \gamma > 1 > s > 0$ and $p > s \gamma$. They proved that minimizers are locally $C^{1, \alpha}$-regular in $\Omega$ and globally Hölder continuous. For the case where $f$ is a given vector field, see \cite{zbMATH07794358}.

Mixed local and nonlocal problems have been comprehensively studied in recent years due to their wide range of real-world applications. For instance, such operators model diffusion processes at different scales and also appear in the theory of optimal search strategies, biomathematics, animal foraging, and more; see, for example, \cite{zbMATH07752594} and the references therein. Another interesting application arises from the superposition of two stochastic processes with different scales, such as a classical random walk and a Lévy process; see \cite{zbMATH07460689} for further details.


\medskip

\noindent {\it {On Nonlocal Problems Involving Weights.}}  Abdellaoui, Attar, and Bentifour \cite{abdellaoui2016fractional} studied a class of nonlocal problems involving weights. They proved the existence of a weak solution to the problem
$$
\left\{
\begin{aligned}
(-\Delta)_{p, \beta}^s u &= f(x, u) & & \text { in } \Omega, \\
u &= 0 & &  \text { in } \mathbb{R}^n \setminus \Omega,
\end{aligned}
\right.
$$
for the largest class of data $f$, including $f(x, u) = f(x) \in L^1(\Omega)$. Here, $\Omega \subset \mathbb{R}^n$ is a domain containing the origin, and the operator is defined as
\begin{equation}\label{161}
(-\Delta)_{p, \beta}^s u(x) := \operatorname{P.V.} \int_{\mathbb{R}^n} \frac{|u(x) - u(y)|^{p-2}(u(x) - u(y))}{|x - y|^{n + ps}} \cdot  \frac{1}{|x|^\beta |y|^\beta} \,  \dye,
\end{equation}
where $1 < p < n$, $0 < s < 1$, $ps < n$, and $0 \leq \beta < \frac{n - ps}{2}$.

This operator \eqref{161} appears naturally in the context of improved Hardy inequalities. Namely, for all test functions $\phi \in C_0^\infty(\Omega)$ and for any $q < p$, we have
$$
G_{s, p}(\phi) \geq C \int_{\Omega} \int_{\Omega} \frac{|v(x) - v(y)|^p}{|x - y|^{n + qs}} w(x)^{\frac{p}{2}} w(y)^{\frac{p}{2}} \, \dxe  \dye,
$$
where
$$
G_{s, p}(\phi) := \int_{\mathbb{R}^n} \int_{\mathbb{R}^n} \frac{|\phi(x) - \phi(y)|^p}{|x - y|^{n + ps}} \, \dxe  \dye - \lambda_{n, p, s} \int_{\mathbb{R}^n} \frac{|\phi(x)|^p}{|x|^{ps}} \, \dxe,
$$
$\lambda_{n, p, s}$ is the optimal Hardy constant, $w(x) = |x|^{-(n- ps)/p}$, and $v(x) = \phi(x)/w(x)$. For more details, see \cite{zbMATH06766506, zbMATH06566683, zbMATH06293381, zbMATH05817574}.


\subsection{Main Results} We study the local boundedness of weak subsolutions to \eqref{4}. The argument is based on an energy estimate (Lemma \ref{73}), the Sobolev inequality, and an iteration technique (Lemma \ref{84}). The local Hölder continuity of weak solutions to \eqref{4} is also considered. We follow the approach of Garain and Kinnunen \cite{zbMATH07576867} and Di Castro, Kuusi, and Palatucci \cite{di2016localfract}, in which the local boundedness estimate and the logarithmic energy estimate (Lemma \ref{h51}) play an important role.

{One of the technical differences between the approaches to the mixed local–nonlocal problem and the purely nonlocal problem lies in the use of Poincaré-type Sobolev inequalities in forms adapted to the local problem (\cite[Corollary 1.4]{chani1985weightedpeano}; \cite[Corollary 1.57]{zbMATH01061233}) and to the nonlocal problem (\cite[Lemma 2.3]{ok2023local}; \cite[Lemma 4.9]{zbMATH06710259}; \cite[Formula (6.3)]{zbMATH02038439}). For example, this difference can be seen in the terms that appear in the lower bounds of the Caccioppoli-type estimates in \cite[Theorem 1.4]{di2016localfract}, \cite[Lemma 3.1]{zbMATH07576867}, and \cite[Lemma 4.2]{ok2023local}; see also Lemma \ref{73} below. These estimates are used to obtain logarithmic estimates (see \cite[Lemma 4.3]{ok2023local} and the estimates in \eqref{h25} below), where terms of the form
$$
r^{p^-_r - p^+_r}
$$
appear, with
$$
p^-_r = \inf_{B_r(x_0) \times B_r(x_0)} p \quad \text { and } \quad p^+_r = \sup_{B_r(x_0) \times B_r(x_0)} p.
$$}

{To handle such terms, it is useful to consider log-Hölder continuity conditions on $p$, for example,
$$
\sup_{0 < r \leq 1/2} \left( \omega_p(r) + \omega_s(r) \right) \ln \left( \frac{1}{r} \right) \leq c_{LH}, \quad \text { for some } \quad c_{LH} > 0,
$$
where
$$
\begin{aligned}
\omega_s(r) := & \displaystyle \sup_{B_r \subset \Omega} \sup_{x_1, x_2, y_1, y_2 \in B_r} \left| s(x_1, y_1) - s(x_2, y_2) \right|, \\[5pt]
\omega_p(r) := &\displaystyle \sup_{B_r \subset \Omega} \sup_{x_1, x_2, y_1, y_2 \in B_r} \left| p(x_1, y_1) - p(x_2, y_2) \right|.
\end{aligned} 
$$}

{With respect to $s$, terms of the form
$$
r^{s^-_r - s^+_r},
$$
where $ s^-_r = \inf_{B_r(x_0) \times B_r(x_0)} s $ and $ s^+_r = \sup_{B_r(x_0) \times B_r(x_0)} s $, appear in \cite[Lemma 4.4]{ok2023local} as a result of applying the logarithmic inequality together with the Poincaré-type inequality for the nonlocal problem (an inequality that involves terms related to $s_r^{-}, s_r^{+}, p_r^{-}$, and $p_r^{+}$). In our case, this issue does not arise (see Corollary \ref{h65}), since the Poincaré-type inequality we employ (Lemma \ref{58}) is associated with the space $W^{1,\pl}(B_r(x_0) ; \mv)$.}

{Although we do not assume a log-Hölder continuity condition on $s$, we impose a restriction on the oscillation of $p$ that depends on $s$, namely,
$$
\left(1 - s^{+}\right)\left(p^{-} - 1\right) \geq \frac{\pl}{p^{-}} \left(\pl - p^{-}\right),
$$
as shown in Theorem \ref{157}.}

{We say that a function $p : \Omega \times \Omega \rightarrow \mathbb{R}$ satisfies condition \ref{144} if:
\begin{enumerate}[label=($P_{\arabic*}$)]
\item  \label{144}  There exists a constant $\mathtt{C}>0$ such that 
\begin{gather}\label{145}
r^{p^-_r - p^+_r} \leq \mathtt{C} , \quad  \text { for every ball  } \quad B_r (x_0) \Subset \Omega .
\end{gather}
\end{enumerate}}

The condition \eqref{145},  taken from \cite{chaker2023local}, is in fact weaker than the log-Hölder continuity of $p$; for further details, see \cite[Remark 1.3]{chaker2023local}.

For a measurable function $v : \Omega \rightarrow \mathbb{R}$, we define
\begin{equation*}
\tl (v ; x_0, r, \rho):=\underset{x \in B_\rho (x_0)}{\esssup} \, \int_{\Omega \setminus B_r(x_0)} \frac{|v(y)|^{p(x, y)-1}}{|y-x_0|^{n+s(x, y) p(x, y)}} \cdot \frac{1}{|y|^\beta} \, \dye ,
\end{equation*}
where $B_{{r}} (x_0)\subset B_\rho (x_0) \subset \Omega$.


The main result of this work is the following:
\begin{theorem}\label{157} (Local Hölder continuity). Let $\Omega$ be a  bounded domain. Assume that conditions \ref{162}, \ref{163}, and {property \ref{144}} are satisfied. Also, suppose that
$$
\beta \geq 0, \quad n-2\beta - 1+     {s^-}>0, \quad n-\beta>s^+\pl, \quad \text { and }\quad (1-s^+)(p^--1)\geq \frac{\pl}{p^-}(\pl- p^-).
$$ 

Let $u\in W^{1,\pl } _{\loc} (\Omega ; |x|^{-2\beta}) \cap W^{s(x,y),p(x,y)} (\Omega ; |x|^{-\beta})$ be a weak solution of \eqref{4}, satisfying $\tl (u ; x_0, r, \rho) <\infty $ for all balls  $B_r (x_0) \subset B_\rho (x_0) \subset \Omega$.   Then $u$ is locally Hölder continuous in $\Omega$.

Moreover, if $B_R(x_0) \subset \Omega$   with $R \in (0, 1/2]$ is a ball such that either  $0 \notin \overline{B_{3R}(x_0)}$ or $x_0 = 0$, then there exist constants $\alpha \in ( 0, \frac{1-s^+}{\pl-1} )$ and $c>0$, depending on ${\sup _{y \in \Omega} |y|}$, $R$, $s^-$, $s^{+}$, $p^{-}$, $\pl$,  $\Lambda$, $\beta$, and $ n$, such that
\begin{equation*}
\begin{aligned}
\underset{B_{\rho}(x_{0})}{\osc} \, u := & \, \displaystyle \displaystyle \underset{B_{\rho}(x_{0})}{\esssup } \, u-\underset{B_{\rho}(x_{0})}{\essinf } \, u \\[5pt]
\leq & \, \displaystyle \displaystyle c\left(\frac{\rho}{r}\right)^{\alpha}   \left[ r^{\frac{\pl}{\pl - 1}}\tl\left( u ; x_{0}, \frac{r}{2} , r \right) ^{\frac{1}{\pl -1}}+\left(\fint _{B_{ r}(x_{0})}|u|^{\pl} \, \dmvbd \right)^{\frac{1}{\pl}} +1 \right],
\end{aligned}
\end{equation*}
where $B_{2r}(x_0) \subset B_R(x_0)$, $r \in (0, 1/4]$, and $\rho \in (0, r/2]$.

If $\beta = 0$, the constants $\alpha$ and $c$ depend only on  $s^-$, $s^+$, $p^-$, $\pl$, $\Lambda$, and $n$. In the case $x_0 = 0$, the constants $\alpha$ and $c$ depend only on  $s^-$, $s^+$, $p^-$, $\pl$, $\Lambda$, $\beta$, and $n$.
\end{theorem}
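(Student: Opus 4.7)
I will adapt the De Giorgi--Nash--Moser strategy, following the template of Di Castro, Kuusi, and Palatucci \cite{di2016localfract} and Garain and Kinnunen \cite{zbMATH07576867}, to the present mixed local--nonlocal, weighted, variable-exponent framework. Two building blocks are already in place: the local boundedness estimate, obtained via the Caccioppoli-type energy inequality (Lemma~\ref{73}), the Sobolev embedding, and the iteration Lemma~\ref{84}; and the logarithmic energy estimate (Lemma~\ref{h51}). The whole argument reduces to proving a single-scale oscillation decay $\osc_{B_{\sigma r}(x_0)} u \le (1-\delta)\,\omega$, where $\omega$ denotes the bracketed quantity on the right-hand side of the theorem evaluated at radius $r$, and then iterating it on a geometric sequence of radii.

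\textbf{Key steps.} First I would fix $B_{2r}(x_0)\subset B_R(x_0)$ and normalize by setting
\[
\omega := r^{\pl/(\pl-1)}\,\tl(u;x_0,r/2,r)^{1/(\pl-1)}+\left(\fint_{B_r(x_0)}|u|^{\pl}\,\dmvbd\right)^{1/\pl}+1,
\]
so that the local boundedness estimate gives $\esssup_{B_r(x_0)} |u|\le c\,\omega$. Writing $M:=\esssup_{B_r}u$, $m:=\essinf_{B_r}u$, I would then run a De Giorgi-type alternative on the shifted functions $v_+:=M-u+d$ and $v_-:=u-m+d$, with $d$ positive and comparable to the tail contribution $r^{\pl/(\pl-1)}\,\tl^{1/(\pl-1)}$; this choice of $d$ is crucial, because it is exactly what absorbs the nonlocal tail term appearing in the log-estimate. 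Applying Lemma~\ref{h51} to $\log v_{\pm}$ on a finite sequence of concentric balls produces dyadic measure shrinking: after a fixed number $k$ of steps, one of the level sets $\{u>M-2^{-k}(M-m+d)\}$ or $\{u<m+2^{-k}(M-m+d)\}$ has arbitrarily small relative measure in $B_{r/2}(x_0)$. Feeding this smallness into Lemma~\ref{73} and the geometric-convergence iteration Lemma~\ref{84} yields the desired decay $\osc_{B_{\sigma r}}u\le(1-\delta)(M-m)+C(\text{tail}+1)$.

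\textbf{Iteration and main obstacle.} Setting $r_j:=\sigma^j r$, I would iterate the one-step decay in $j$, controlling the remainder via the quasi-monotonicity of $\tl(u;x_0,r_j/2,r_j)$; the assumption $n-\beta>s^+\pl$ ensures that the tail contributions form a summable geometric series that does not destroy the decay. Choosing $\alpha>0$ so that $(1-\delta)\le\sigma^{\alpha}$ and $\alpha<(1-s^+)/(\pl-1)$ produces the claimed modulus of continuity by interpolation between two consecutive dyadic radii. The principal difficulty is the non-homogeneity of the operator caused by the variable exponent $p(x,y)$ combined with the weight $|x|^{-2\beta}$: rescaling to unit balls introduces additive ``$+1$'' terms and mismatched powers $p^-$ versus $\pl$, and this is precisely where the balancing hypothesis $(1-s^+)(p^--1)\ge(\pl/p^-)(\pl-p^-)$ must intervene to make the iteration Lemma~\ref{84} close. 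Two further difficulties are the split into the cases $0\notin\overline{B_{3R}(x_0)}$ (where the weight is a bounded perturbation and constants can be made independent of $\dist(x_0,0)$) and $x_0=0$ (where scale invariance of the weight is exploited), which require distinct choices of admissible cut-offs and slightly different energy comparisons; and the verification that the weighted kernel is integrable against those cut-offs, for which the hypothesis $n-2\beta-1+s^+>0$ is exactly tailored.
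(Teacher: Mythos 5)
Your overall scheme is the one the paper itself uses (and inherits from Di Castro--Kuusi--Palatucci and Garain--Kinnunen): a measure dichotomy as in \eqref{100}--\eqref{101}, the logarithmic estimate of Lemma \ref{h51}/Corollary \ref{h65} combined with the weighted Poincar\'e--Sobolev inequalities to shrink a sub-level set, the Caccioppoli estimate of Lemma \ref{73} plus Corollary \ref{70} and Lemma \ref{84} to propagate positivity, and finally an iteration over geometric radii $r_j=\eta^j r/2$ (Lemma \ref{183}). However, there is a genuine gap in your key quantitative choice: you take $d$ comparable to the scaled tail $r^{\pl/(\pl-1)}\tl(u;x_0,r/2,r)^{1/(\pl-1)}$. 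The level at which the logarithmic lemma shrinks measure, and hence the level of positivity that the De Giorgi iteration propagates to the smaller ball, is proportional to $d$; so with your choice the one-step improvement is only $\osc_{B_{j+1}}u\le \osc_{B_j}u-c\,d$, an \emph{additive} gain of tail size, which can be arbitrarily small compared with the current oscillation and therefore does not produce the fixed-factor decay $\osc_{B_{j+1}}u\le(1-\epsilon)\,\omega(r_j)$ needed for H\"older continuity. In the paper $d=\epsilon\,\omega(r_j)$ with $\epsilon=\eta^{\pl/(\pl-1)-\alpha}$, i.e.\ a fixed small fraction of the current oscillation bound; the tail is then \emph{not} absorbed by $d$ a priori, but is estimated afterwards in \eqref{119}--\eqref{120} (via \eqref{113}--\eqref{118}) using the induction hypothesis $\osc_{B_i}u\le\omega(r_i)$ at \emph{all} previous scales, summed as a geometric series thanks to $\alpha<\tfrac{1-s^+}{\pl-1}$ — this is what your appeal to ``quasi-monotonicity of $\tl$'' glosses over. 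Moreover, in the variable-exponent setting the log-estimate carries the factor $\max\{1,d^{\,p^--\pl}\}$, and the balancing conditions \eqref{122} and \eqref{138} (this is exactly where $(1-s^+)(p^--1)\ge\frac{\pl}{p^-}(\pl-p^-)$ is used) work only because $d=\epsilon\,\omega(r_j)\ge 2\epsilon(r_j/r_0)^\alpha$ is bounded below by a power of $r_j$; a tail-sized $d$ admits no such lower bound, so that factor cannot be controlled.

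A secondary but real issue: your one-step decay is stated as $\osc_{B_{\sigma r}}u\le(1-\delta)(M-m)+C(\mathrm{tail}+1)$ with a scale-independent ``$+1$''. Iterating such an inequality only gives a uniform bound $\osc\lesssim C/\delta$, not decay. In the paper the ``$+1$'' enters once, in the normalization \eqref{136} of $\omega(r_0)$ (guaranteeing $\omega(r_0)\ge2$, which is also needed for the balancing estimates above), and the decay is then carried entirely by the prescribed modulus $\omega(r_j)=(r_j/r_0)^\alpha\omega(r_0)$ in \eqref{99}; no additive constant appears in the induction step \eqref{141}. To repair your argument you would need to (i) tie $d$ and the truncation level of the logarithm to $\epsilon\,\omega(r_j)$ rather than to the tail, (ii) prove the tail estimates for the shifted functions $u_j$ from the induction hypothesis (or, at minimum, from the local sup bound via an annular decomposition), and (iii) close the exponent bookkeeping exactly as in \eqref{122}, \eqref{124}, \eqref{138}.
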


Our second main result is given in the next theorem. {Note that this result does not require the condition \ref{144}.}
\begin{theorem}\label{129}
(Local boundedness). Let $\Omega$ be a  bounded domain. Assume conditions \ref{162} and \ref{163} are satisfied. Also, assume that $\beta \geq 0$ and $n-\beta - 1+      {s^-}>0$.

Let $u\in W^{1,\pl } _{\loc} (\Omega ; |x|^{-2\beta}) \cap W^{s(x,y),p(x,y)} (\Omega ; |x|^{-\beta})$ be a weak solution of \eqref{4}, satisfying $\tl (u ; x_0, r, \rho) <\infty $ for all balls  $B_r (x_0) \subset B_\rho (x_0) \subset \Omega$. Then there exists a  constant $C=C(s^{+}, p^{-}, \pl , \beta, \Lambda, n)>0$, such that
\begin{equation*}
\begin{aligned}
\underset{B_{r/2}(x_{0})}{\esssup } \,  u  
 \leq  \delta (|x_0|+1)^{\frac{\beta}{\pl-1}} r^{\frac{\pl}{\pl-1}} \tl \left(u_+ ; x_{0}, \frac{r}{2} ,r \right)^{\frac{1}{\pl -1}} +C\delta^{-\frac{(\pl - 1)\kappa}{\pl(\kappa -1)}} \left(\fint_{B_{r}(x_{0})} u_+^{\pl} \, \dmvbd \right)^{\frac{1}{\pl}} +1,
\end{aligned}
\end{equation*}
whenever $B_{r}(x_{0}) \subset \Omega$ with $r \in(0,1/2]$ and $\delta \in(0,1]$. Here,      {\(\kappa = n/(n - \pl)\) if \(1 < \pl < n\), and \(\kappa = 2\) if \(\pl \geq n\).}
\end{theorem}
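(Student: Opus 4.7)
The plan is to carry out a De Giorgi iteration on shrinking balls and rising truncation levels, using the three tools the paper has prepared for this purpose: the energy estimate (Lemma \ref{73}), the weighted Sobolev embedding (Corollary \ref{70}), and the geometric iteration lemma (Lemma \ref{84}). Set $r_j := r/2 + r/2^{j+1}$, $B_j := B_{r_j}(x_0)$, levels $k_j := k(1-2^{-j})$ with $k>0$ to be chosen, smooth cutoffs $\eta_j \in C_0^{\infty}(B_j)$ with $\eta_j \equiv 1$ on $B_{j+1}$ and $|\nabla \eta_j| \leq c\,2^{j}/r$, and $w_j := (u-k_j)_+$. Let $A_j := \fint_{B_j} w_j^{\pl}\,\dmvbd$. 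The goal is to force $A_j \to 0$ for a suitable $k$, which then yields $u \leq k$ a.e.\ on $B_{r/2}(x_0)$.

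Applying Lemma \ref{73} to the subsolution $w_j$ on the pair $(B_j, B_{j+1})$ controls the weighted Dirichlet energy of $w_j$ plus the fractional Gagliardo seminorm of $\eta_j w_j$ by a local Caccioppoli piece $\lesssim (2^{j}/r)^{\pl}\int_{B_j} w_j^{\pl}\,\dmvbd$ and a nonlocal tail piece whose coefficient contains $\tl(u_+;x_0,r/2,r)$. This tail is split using Young's inequality with parameter $\delta$: one part, of the form $\delta(|x_0|+1)^{\beta/(\pl-1)} r^{\pl/(\pl-1)}\tl^{1/(\pl-1)}$, is kept as the first summand in the target bound, while the remainder is absorbed into the iteration. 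Coupling the energy with Corollary \ref{70}, which furnishes an exponent $\kappa > 1$ and the inequality
$$\Bigl(\fint_{B_{j+1}} (\eta_j w_j)^{\pl\kappa}\,\dmvbd\Bigr)^{1/\kappa} \leq C\, r^{\pl}\fint_{B_{j+1}} |x|^{-2\beta}\,|\nabla(\eta_j w_j)|^{\pl}\,\dxe,$$
and invoking the Tchebyshev estimate $|\{w_{j-1} > k_j-k_{j-1}\} \cap B_{j-1}| \leq (2^{j}/k)^{\pl}\int w_{j-1}^{\pl}$, I obtain a recursive inequality of the form
$$A_{j+1} \leq C\, b^{j}\, k^{-\pl\theta}\, A_j^{1+\theta}, \qquad \theta := \frac{\kappa-1}{\kappa} > 0,\quad b > 1.$$

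The iteration Lemma \ref{84} then yields $A_j \to 0$ whenever $A_0 = \fint_{B_r(x_0)} u_+^{\pl}\,\dmvbd$ is dominated by $C^{-1/\theta}b^{-1/\theta^2} k^{\pl}$. Solving this smallness condition for $k$ and combining with the $\delta$-split of the tail produces exactly the announced value
$$k = \delta(|x_0|+1)^{\frac{\beta}{\pl-1}} r^{\frac{\pl}{\pl-1}} \tl(u_+;x_0,r/2,r)^{\frac{1}{\pl-1}} + C\delta^{-\frac{(\pl-1)\kappa}{\pl(\kappa-1)}}\Bigl(\fint_{B_r(x_0)} u_+^{\pl}\,\dmvbd\Bigr)^{1/\pl} + 1,$$
which therefore bounds $u$ from above on $B_{r/2}(x_0)$.

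The main obstacle is the interaction between the variable exponent $p(x,y)$ in the nonlocal energy and tail on the one hand, and the constant exponent $\pl$ that drives the Sobolev bootstrap on the other. Controlling expressions homogeneous of degree $p(x,y)$ by expressions homogeneous of degree $\pl$ requires a case split according to whether $w_j \leq 1$ or $w_j > 1$, and it is precisely this split that forces the additive constant $+1$ in the final estimate. A secondary difficulty is tracking the weight $|x|^{-2\beta}$ through the chain of estimates: when $x_0 \neq 0$ one uses $|x|^{-2\beta} \asymp (|x_0|+1)^{-2\beta}$ on $B_r(x_0)$ for $r \leq 1/2$, which produces the prefactor $(|x_0|+1)^{\beta/(\pl-1)}$ in front of the tail, whereas when $x_0 = 0$ the singularity of the weight is absorbed directly by the weighted Sobolev inequality, consistent with the stated dependence of the constants.
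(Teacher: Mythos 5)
Your plan is the paper's own strategy: a De Giorgi iteration built from the Caccioppoli-type estimate of Lemma \ref{73}, the weighted Sobolev inequality of Corollary \ref{70}, and the iteration Lemma \ref{84}, with the level chosen to contain $\delta(|x_0|+1)^{\beta/(\pl-1)}r^{\pl/(\pl-1)}\tl^{1/(\pl-1)}+\dots+1$ so that the tail coefficient is controlled by $\delta^{1-\pl}$ and the extra $+1$ neutralizes the $p(x,y)$-versus-$\pl$ homogeneity mismatch (the paper's $a^{p(x,y)}\le a^{\pl}+1$ and the factor $\min\{\tilde k^{\pl},1\}$). Two of your descriptions differ only cosmetically from the paper: (i) you derive the recursion by H\"older plus Chebyshev, getting $A_{j+1}\le Cb^{j}k^{-\pl\theta}A_j^{1+\theta}$ with $\theta=(\kappa-1)/\kappa$, while the paper uses the level-gap inequality \eqref{83} and works with $Y_j=A_j^{1/\pl}$, getting exponent $\kappa$; both feed Lemma \ref{84} and both produce the same power $\delta^{-\frac{(\pl-1)\kappa}{\pl(\kappa-1)}}$; (ii) what you call a ``Young's inequality split'' of the tail is, in effect, the level choice — note that a literal Young split $\tl\cdot w\le \epsilon w^{\pl}+C_\epsilon\tl^{\pl'}$ would inject an additive term that does not vanish along the iteration and would defeat Lemma \ref{84}; since you keep the tail inside $k$ and absorb only a $\delta^{1-\pl}$-multiple of the local energy, your mechanism coincides with the paper's.

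The one genuine gap is in your treatment of the weight. The dichotomy ``$x_0\neq 0$: use $|x|^{-2\beta}\asymp(|x_0|+1)^{-2\beta}$ on $B_r(x_0)$'' versus ``$x_0=0$: the Sobolev inequality absorbs the singularity'' does not cover the regime $0<|x_0|<3r$, where the origin lies inside (or very near) the ball but not at its center: there $|x|^{-2\beta}$ is unbounded on $B_r(x_0)$ and no pointwise comparability with a constant holds, yet the theorem makes no restriction on the position of $x_0$, so this case cannot be avoided. The paper handles it by splitting the nonlocal term $J_2$ according to $x_0\notin B_{3r}(0)$ or $x_0\in B_{3r}(0)$ and, in the latter case, integrating the kernel against the weight via Lemma \ref{27} (estimates \eqref{76}--\eqref{75}); also, the prefactor $(|x_0|+1)^{\beta/(\pl-1)}$ in front of the tail does not come from a comparability of weights but from the crude upper bound $|x|^{\beta}\le(|x_0|+r)^{\beta}\le(|x_0|+1)^{\beta}$ in the tail term $J_3$, which is valid for every $x_0$. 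With that correction — replace your $x_0\neq0$/$x_0=0$ split by the paper's $x_0\notin B_{3r}(0)$/$x_0\in B_{3r}(0)$ split and invoke Lemma \ref{27} for the weighted kernel integrals — your argument matches the paper's proof.
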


This article is organized as follows. Section \ref{184} introduces key definitions and preliminary results. Section \ref{185} is devoted to establishing the necessary energy estimates. In Sections \ref{186} and \ref{187}, we prove the local boundedness and local Hölder continuity of weak solutions, respectively.


\section{Preliminaries} \label{184}


We adopt the following notation:
\begin{enumerate} 
\item[$\bullet$]  $\pl := \esssup_{\Omega \times \Omega} \, p$.

\item[$\bullet$] For $x, y \in \mathbb{R}^n$, their Euclidean inner product is denoted by $\langle x, y \rangle$ or $x \cdot y$.  The Euclidean norm is denoted by $|x|$.

\item[$\bullet$] The Euclidean ball of radius $r$ centered at $x_0$ is denoted by
$B_r(x_0) = \{ x \in \mathbb{R}^n \: | \: |x - x_0| < r \}$.

\item[$\bullet$] The conjugate exponent of $q > 1$ is given by $q' := \frac{q}{q - 1}$.

\item[$\bullet$] We define 
$$
\mvbb (x) := |x|^{-\beta}, \quad  \mvbb (F) := \int _F  \dmvbb, \quad \text {  and  } \quad  \dmvbb := |x|^{-\beta}  \dxe ,
$$
where $F$ is a measurable set. The standard Lebesgue measure of $F$ is denoted by $|F|$.

\item[$\bullet$] For a measurable function $f : B_r(x_0) \rightarrow \mathbb{R}$, we write
$$
\fint _{B_r(x_0)} f\, \dmvbb  = (f)_{B_r(x_0), \mvbb } = \frac{1}{\mvbb (B_r(x_0))} \int_{B_r(x_0)} f \, \dmvbb .
$$

\item[$\bullet$] For $a \in \mathbb{R}$, we  set $a_+ := \max\{a, 0\}$ and $ a_- := -\min\{a, 0\}$.

\item[$\bullet$] We write 
$$
\mathcal{A}(u,x, y) : = |u(x)-u(y)|^{p(x,y)-2}(u(x)-u(y)) \quad \text { and } \quad \dxu (x,y) : =\frac{1}{|x|^\beta |y|^\beta} K(x, y) \, \dxe \dye.
$$
\item[$\bullet$] The symbols $c$, $C$, or $C_i$ (for $i \in \mathbb{N}$) denote positive constants that may vary from line to line, or even within the same line. If a constant depends on parameters $r_1, r_2, \ldots$, we write it as $C(r_1, r_2, \ldots)$.
\end{enumerate}

\subsection{Weighted Variable Exponent Lebesgue Spaces}

In this section, we provide introductory definitions and results on weighted variable exponent Lebesgue spaces. For further details, see \cite{dienhar2011lebesgue, diening2008muckenhoupt, fanzhao2001spaces, heinonen2006nonlinpot, kovavcik1991spaces, zbMATH01308968, turesson2000potentweig}.

Let $\mv$ be a weight function on $\mathbb R^n$; that is,  $\mv$ is a nonnegative and locally integrable function with respect to the Lebesgue measure.
 

Let $\Omega \subset \mathbb{R}^n$ be an open set, and let $p : \Omega \rightarrow[1, \infty)$ (called a {\it variable exponent} on $\Omega$) be a measurable function. The weighted variable exponent Lebesgue space $L^{p(x)}(\Omega; \mv)$ is defined by
$$
L^{p(x)}(\Omega ; \mv )=\left\{u : \Omega \rightarrow \mathbb{R} \: | \: u \text { is measurable  and }  \int_{\Omega}|u|^{{p(x)}}   \mv \, \dxe <\infty\right\},
$$
with the norm
\begin{equation}\label{142}
\|u\|_{p(x), \Omega , \mv} := \inf \left\{\lambda > 0 \: |\:  \varrho_{p(x), \Omega , \mv}\left(\frac{u}{\lambda}\right) \leq 1\right\},
\end{equation}
where
$$
\varrho_{p(x), \Omega , \mv} (u)  :=\int_{\Omega}|u|^{{p(x)}}  \mv \, \dxe .
$$

The relationship between the modular $\varrho _{p(x) , \Omega , \mv} (\cdot)$ and  the norm $\|\cdot\|_{p(x) , \Omega , \mv}$ is given by:
\begin{equation*}
\begin{aligned}
&   \min \left\{\varrho _{p(x) , \Omega , \mv}  (f)^{\frac{1}{p^{-}}}, \varrho _{p(x) , \Omega , \mv}(f)^{\frac{1}{p^+ }}\right\} \\[5pt]
&  \qquad \leq \|f\|_{p(x) , \Omega , \mv} \leq \max \left\{\varrho _{p(x) , \Omega , \mv}(f)^{\frac{1}{p^{-}}}, \varrho _{p(x) , \Omega , \mv}(f)^{\frac{1}{p^+ }}\right\},
\end{aligned}
\end{equation*}
where $p^{-}=\underset{\Omega}{\essinf} \,  p$ and $p^+ =\underset{ \Omega}{\esssup} \, p$.

Hölder's inequality can be written in the form:
\begin{equation}\label{154}
\|f g\|_{s(x) , \Omega , \mv} \leq 2\|f\|_{p(x) , \Omega , \mv} \|g\|_{q(x) , \Omega , \mv},
\end{equation}
where $\frac{1}{s}=\frac{1}{p}+\frac{1}{q}$; see  \cite[Lemma 3.2.20]{dienhar2011lebesgue}.

For an introduction to weighted Lebesgue and Sobolev spaces, we refer the reader to \cite{heinonen2006nonlinpot, turesson2000potentweig}. Let $q\in (1,\infty)$. The {\it weighted Lebesgue space} $L^q( \Omega ; \mv)$ is defined by
$$
L^q(\Omega ; \mv):=\left\{u : \Omega \rightarrow \mathbb{R} \:|\: u \text { is measurable  and } \int_\Omega |u|^q  \mv \, \dxe <\infty\right\},
$$
equipped with the norm
$$
\|u\|_{q, \Omega , \mv}:=\left(\int_\Omega |u|^q  \mv \, \dxe \right)^{\frac{1}{q}}.
$$

The {\it weighted Sobolev space} is defined by
$$
W^{1, q}(\Omega ; \mv):=\left\{u \in L^q(\Omega ; \mv )\:|\:  | \nabla u | \in L^q( \Omega; \mv)\right\}
$$
equipped with the norm
$$
\|u\|_{1, q, \Omega , \mv }:=\|u\|_{q, \Omega, \mv}+\|\nabla u\|_{q, \Omega, \mv}.
$$


\subsection{Weighted Variable Exponent Fractional Sobolev Spaces} In this subsection, we  provide introductory definitions and results on weighted variable exponent fractional Sobolev spaces. For more details, see  \cite{zbMATH07837279, zbMATH07575925, ho2019fract, zbMATH07647941}.

Let $\Omega \subset \mathbb{R}^n$ be an open set, and let $\mv$ be a weight function on $\mathbb{R}^n$. Let $p : \Omega \times \Omega \rightarrow [1,\infty )$ and $s : \Omega \times \Omega \rightarrow (0,1)$ be measurable functions such that
\begin{gather}
1< \underset{\Omega \times \Omega}{\essinf} \, p \leq   \underset{\Omega \times \Omega}{\esssup} \, p <\infty, \quad  p(x,y)=p(y,x), \text { a.e. } (x,y)\in \Omega\times \Omega, \label{174}\\[5pt]
0< \underset{\Omega \times \Omega}{\essinf} \, s \leq   \underset{\Omega \times \Omega}{\esssup} \, s <1 , \quad s(x,y)=s(y,x), \text { a.e. } (x,y)\in \Omega\times \Omega. \label{175}
\end{gather}

The weighted variable exponent fractional Sobolev space is defined as
$$
W^{s(x,y),p(x,y)} (\Omega ; \mv):= \left\{u\in L^{\rep (x)} (\Omega ; \mv)  \:|\: \tilde \varrho _{s,p, \Omega, \mv}(u) < \infty \right\},
$$
where
$$
\tilde \varrho_{s,p, \Omega, \mv}(u):=\int_{\Omega} \int_{\Omega} \frac{|u(x) - u(y)|^{p(x,y)} }{|x-y|^{n+ s(x,y)p(x,y)}}  \mv (x) \mv (y) \, \dxe \dye
$$
and
$$
\rep (x) := p(x,x).
$$

We define the seminorm
$$
[u]_{s , p ,\Omega,\mv} :=\inf \left\{ \lambda >0 \:|\: \tilde \varrho _{s,p,\Omega,\mv} \left(\frac{u}{\lambda}\right)\leq 1 \right\}.
$$

Then, $W^{s(x,y),p(x,y)} (\Omega ; \mv)$, endowed with the norm
$$
\|u\|_{s,p,\Omega,\mv} := \|u\|_{\rep (x) , \Omega , \mv} + [u]_{s , p ,\Omega,\mv} ,
$$
is a Banach space; see \cite{bahrouni2018compfrac, bahrouni2018fract, cheng2020variablefrac}.

We also define
$$
\hat \varrho _{s,p,\Omega,\mv} (u):= \varrho _{\rep (x) , \Omega , \mv} (u) +\tilde \varrho_{s,p, \Omega, \mv}(u),
$$
and the corresponding norm
$$
|u|_{s,p,\Omega,\mv} := \inf \left\{\lambda >0\:|\: \hat \varrho _{s,p,\Omega,\mv} \left(\frac{u}{\lambda}\right) \leq 1\right\}.
$$

Then, the norms $\|\cdot\|_{s,p,\Omega,\mv}$ and $|\cdot | _{s,p,\Omega,\mv}$ are comparable; see \cite{ho2019fract}. The following lemma is taken from \cite{ho2019fract}.
\begin{lemma}
 Let $u \in W^{s(x,y), p(x,y)}(\Omega ; \mv)$. Then:
 \begin{enumerate}[label=(\roman*)]
\item If $[u]_{s,p,\Omega,\mv} \geq 1$, then $[u]_{s,p,\Omega,\mv} ^{p^-} \leq \tilde \varrho_{s,p,\Omega,\mv} (u) \leq[u]_{s,p,\Omega,\mv} ^{p^+ }$.

\item If $[u]_{s,p,\Omega,\mv}  \leq 1$, then $[u]_{s,p,\Omega,\mv} ^{p^+ } \leq \tilde \varrho _{s,p,\Omega,\mv} (u) \leq [u]_{s,p,\Omega,\mv} ^{p^{-}}$.

\item If $|u|_{s,p,\Omega,\mv}  \geq 1$, then $|u|_{s,p,\Omega,\mv} ^{p^{-}} \leq \hat{\varrho} _{s,p,\Omega,\mv} (u) \leq |u|_{s,p,\Omega,\mv} ^{p^+ }$.

\item If $|u|_{s,p,\Omega,\mv}  \leq 1$, then $|u|_{s,p,\Omega,\mv} ^{p^+ } \leq \hat{\varrho} _{s,p,\Omega,\mv}  (u) \leq |u|_{s,p,\Omega,\mv} ^{p^{-}}$.
\end{enumerate}
\end{lemma}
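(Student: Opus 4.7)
The plan is to establish all four claims via the standard modular--Luxemburg norm equivalence, driven by the elementary pointwise scaling of $\lambda^{p(x,y)}$. The key analytic inputs are minimal; the bulk of the argument is bookkeeping.

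The starting point is the pointwise comparison $\lambda^{p^-} \leq \lambda^{p(x,y)} \leq \lambda^{p^+}$ when $\lambda \geq 1$, with reversed inequalities for $\lambda \leq 1$. Dividing the integrand of $\tilde{\varrho}_{s,p,\Omega,\mv}(u/\lambda)$ by $\lambda^{p(x,y)}$ and integrating yields the two-sided modular scaling
\begin{equation*}
\lambda^{-p^+}\tilde{\varrho}_{s,p,\Omega,\mv}(u) \leq \tilde{\varrho}_{s,p,\Omega,\mv}(u/\lambda) \leq \lambda^{-p^-}\tilde{\varrho}_{s,p,\Omega,\mv}(u) \quad \text{for } \lambda \geq 1,
\end{equation*}
together with the mirror estimates for $\lambda \leq 1$. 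The same scaling holds for $\hat{\varrho}_{s,p,\Omega,\mv}$, because both of its summands $\varrho_{\rep(x),\Omega,\mv}$ and $\tilde{\varrho}_{s,p,\Omega,\mv}$ are positively homogeneous with a measurable exponent taking values in $[p^-, p^+]$.

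Next, I would establish the boundary relation $\tilde{\varrho}_{s,p,\Omega,\mv}\bigl(u/[u]_{s,p,\Omega,\mv}\bigr) \leq 1$ whenever $[u]_{s,p,\Omega,\mv} > 0$. Picking a decreasing sequence $\lambda_k \searrow [u]_{s,p,\Omega,\mv}$ with $\tilde{\varrho}_{s,p,\Omega,\mv}(u/\lambda_k) \leq 1$ and applying the monotone convergence theorem to the pointwise nondecreasing integrand $|u(x)-u(y)|^{p(x,y)}\lambda_k^{-p(x,y)}$ delivers the claim; the analogous statement for $\hat{\varrho}_{s,p,\Omega,\mv}$ is identical. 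Combined with the continuity and monotonicity of $\lambda \mapsto \tilde{\varrho}_{s,p,\Omega,\mv}(u/\lambda)$ (by dominated convergence), this also yields the unit-ball equivalences $\tilde{\varrho}_{s,p,\Omega,\mv}(u) \geq 1 \iff [u]_{s,p,\Omega,\mv} \geq 1$ and $\tilde{\varrho}_{s,p,\Omega,\mv}(u) \leq 1 \iff [u]_{s,p,\Omega,\mv} \leq 1$, with the matching equivalences for $\hat{\varrho}_{s,p,\Omega,\mv}$ and $|\cdot|_{s,p,\Omega,\mv}$.

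With these ingredients in hand, claim (i) is immediate. Assuming $[u]_{s,p,\Omega,\mv} \geq 1$, the boundary relation at $\lambda = [u]_{s,p,\Omega,\mv}$ together with the scaling gives $\tilde{\varrho}_{s,p,\Omega,\mv}(u) \leq [u]_{s,p,\Omega,\mv}^{p^+}$, while setting $\mu := \tilde{\varrho}_{s,p,\Omega,\mv}(u)^{1/p^-}$ (which is $\geq 1$ by the equivalence above) produces $\tilde{\varrho}_{s,p,\Omega,\mv}(u/\mu) \leq \mu^{-p^-}\tilde{\varrho}_{s,p,\Omega,\mv}(u) = 1$, whence $[u]_{s,p,\Omega,\mv} \leq \mu$ and therefore $[u]_{s,p,\Omega,\mv}^{p^-} \leq \tilde{\varrho}_{s,p,\Omega,\mv}(u)$. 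Case (ii) follows symmetrically, using $\lambda = [u]_{s,p,\Omega,\mv} \leq 1$ and $\mu = \tilde{\varrho}_{s,p,\Omega,\mv}(u)^{1/p^+}$ together with the reversed scaling estimates. Statements (iii) and (iv) are obtained by repeating the argument verbatim, substituting $\tilde{\varrho}_{s,p,\Omega,\mv}$ and $[\cdot]_{s,p,\Omega,\mv}$ by $\hat{\varrho}_{s,p,\Omega,\mv}$ and $|\cdot|_{s,p,\Omega,\mv}$, which is legitimate precisely because the scaling bound and boundary relation were verified for $\hat{\varrho}_{s,p,\Omega,\mv}$ above.

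The main delicate point I expect is the justification of the boundary relation at the Luxemburg radius itself, since this is a genuine limiting statement rather than a pointwise manipulation, and similarly the unit-ball equivalences rest on the continuity and monotonicity of the modular in the scaling parameter rather than on purely algebraic reasoning. Everything else reduces to mechanical bookkeeping of the scaling inequalities.
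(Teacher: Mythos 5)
Your argument is correct: the modular scaling inequalities, the monotone-convergence justification that the Luxemburg infimum itself is admissible (i.e.\ $\tilde \varrho _{s,p,\Omega,\mv}(u/[u]_{s,p,\Omega,\mv})\leq 1$), and the resulting unit-ball comparisons constitute the standard proof of this norm--modular relation, and the only point left implicit is the trivial degenerate case $[u]_{s,p,\Omega,\mv}=0$ (resp.\ $|u|_{s,p,\Omega,\mv}=0$), where all quantities vanish. The paper gives no proof of its own --- it simply quotes the lemma from \cite{ho2019fract} --- and the argument there is this same standard one, so your proposal matches the intended route.
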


The following lemma will be used several times throughout this article.
\begin{lemma} \label{27}
\begin{enumerate}[label=(\roman*)]
\item \label{23} If $R>0$, $0<|y|\leq R$, $0<\alpha <1$, $\bbt\geq 0$, and $n-\bbt -\alpha>0$, then
\begin{equation}\label{21}
\int_{B_R(0)} \frac{1}{|x-y|^{n-\alpha}|x|^{\bbt}} \, \dxe \leq  C \frac{R^{2\alpha}}{|y|^{\bbt +\alpha}},
\end{equation}
where
 $$
C=   \frac{ 2^{1-\alpha}  \pi^{\frac{n}{2}}   \Gamma\left(\frac{1-\alpha}{2}\right)}{ \Gamma\left(\frac{n-1}{2}\right) \Gamma\left(1-\frac{\alpha}{2}\right)}    \left[  \frac{\Gamma(n-\bbt-\alpha) \Gamma\left(\frac{\alpha}{2}\right)}{\Gamma\left(n-\bbt-\frac{\alpha}{2}\right)} + \frac{1}{\alpha}  \right].
$$

\item \label{28} If $0<R\leq |y|/2$, $\bbt \geq 0$, and $n- \bbt  >\gamma >0$, then
\begin{equation} \label{34}
\int_{B_R(0)} \frac{1}{|x-y|^{n+\gamma}|x|^{\bbt}} \, \dxe \leq  \frac{2^{4+\gamma}\pi^{\frac{n+1}{2}}}{3\Gamma\left(\frac{n-1}{2}\right) (n- \bbt -\gamma )} \cdot  \frac{R^{n- \bbt -\gamma }}{|y| ^{n}} .
\end{equation}

\item \label{167} Assume $n-\bbt - (1-a)>0$ and $\bbt \geq 0$. Also, let $0<a<1$ and $1\leq b$. Let $K\subset \mathbb{R}^n$ be a bounded measurable set. If $x \in K$, $x\neq 0$, and $K\subset B_r(0)$, then
\begin{equation}\label{168}
\int _{K} \frac{1}{|x-y|^{n-(1-a)b}|y|^{\bbt}} \, \dye \leq C(r,a,b) \frac{1}{|x|^{\bbt}}.
\end{equation}

\end{enumerate}
\end{lemma}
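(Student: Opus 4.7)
The plan is to prove each of the three estimates by decomposing the integration domain into pieces where only one of the two singular factors dominates, then bounding each piece by elementary polar-coordinate computations. In all three parts, the hypothesis $\bbt < n$ (implied by the standing assumption in each item) is what makes the weight $|x|^{-\bbt}$ locally integrable near the origin.

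For part \ref{23}, I would first normalize via the dilation $x = |y|z$, which rewrites the integral as
$$
\int_{B_R(0)} \frac{\dxe}{|x-y|^{n-\alpha}\, |x|^{\bbt}} \;=\; |y|^{\alpha - \bbt} \int_{B_\tau(0)} \frac{\textnormal{d}z}{|z - e_y|^{n-\alpha}\, |z|^{\bbt}},
$$
with $e_y := y/|y| \in S^{n-1}$ and $\tau := R/|y| \geq 1$. It then suffices to bound the inner integral by $C \tau^{2\alpha}$. I would decompose $B_\tau(0) = B_{1/2}(e_y) \cup B_{1/2}(0) \cup A$, where $A$ is the complementary annular region. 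On $B_{1/2}(e_y)$, one has $|z| \geq 1/2$ so $|z|^{-\bbt}$ is bounded, while $|z - e_y|^{-(n-\alpha)}$ is integrable since $\alpha > 0$. On $B_{1/2}(0)$, the factor $|z - e_y| \geq 1/2$ is bounded below, and $|z|^{-\bbt}$ is integrable since $\bbt < n - \alpha$. On the portion of $A$ with $|z| \geq 2$, the inequality $|z - e_y| \geq |z|/2$ controls the integrand by $C |z|^{-(n - \alpha + \bbt)}$, and polar coordinates produce $\int_2^\tau r^{\alpha - \bbt - 1}\, \dr \leq C \tau^\alpha \leq C \tau^{2\alpha}$. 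To reproduce the precise $\Gamma$-function constants claimed, I would instead use polar coordinates centered at $y$ and compute the spherical average $\int_{S^{n-1}} |y + \rho\omega|^{-\bbt}\, \ds(\omega)$ in closed form via the substitution $t = \cos\theta$; this produces the Beta-function factor $B(n-\bbt-\alpha, \alpha/2) = \Gamma(n-\bbt-\alpha)\Gamma(\alpha/2)/\Gamma(n-\bbt-\alpha/2)$ appearing in the statement, while the $1/\alpha$ summand comes from the truncation of the radial integral of $\rho^{\alpha-1}$ at small $\rho$.

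Parts \ref{28} and \ref{167} should be far more elementary. For \ref{28}, the hypothesis $R \leq |y|/2$ forces $|x-y| \geq |y|/2$ for every $x \in B_R(0)$, so pulling out the factor $(|y|/2)^{-(n+\gamma)}$ reduces the task to $\int_{B_R(0)} |x|^{-\bbt}\, \dxe \leq C R^{n-\bbt}$; writing $R^{n-\bbt}/|y|^{n+\gamma} = (R/|y|)^\gamma \cdot R^{n - \bbt - \gamma}/|y|^n$ and using $(R/|y|)^\gamma \leq 2^{-\gamma}$ yields the stated bound with an explicit constant. For \ref{167}, I would split $K = K_1 \cup K_2$ where $K_1 := K \cap \{|y| \geq |x|/2\}$ and $K_2 := K \cap \{|y| < |x|/2\}$. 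On $K_1$, the bound $|y|^{-\bbt} \leq 2^{\bbt}\, |x|^{-\bbt}$ factors out the singularity in $x$, leaving $\int_{B_{2r}(x)} |x-y|^{-(n-(1-a)b)}\, \dye \leq C(r, a, b)$. On $K_2$, the reverse estimate $|x-y| \geq |x|/2$ factors out the $|x-y|$ singularity, leaving $\int_{B_{|x|/2}(0)} |y|^{-\bbt}\, \dye \leq C |x|^{n - \bbt}$; the resulting $|x|^{(1-a)b - \bbt}$ is controlled by $r^{(1-a)b}\, |x|^{-\bbt}$ because $|x| \leq r$.

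The only delicate point I anticipate is matching the precise $\Gamma$-function constant in part \ref{23}: the region-splitting argument readily yields the correct qualitative bound $C \tau^{2\alpha}$, but the optimal constant requires the more careful polar-coordinate computation outlined above, with the spherical-average identity reducing everything to standard Beta-function integrals. Parts \ref{28} and \ref{167} reduce to one-shot applications of the triangle inequality and direct radial integration.
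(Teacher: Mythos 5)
Your estimates are essentially correct, but you prove them by a genuinely different and more elementary route than the paper, so a comparison is in order. For \ref{23} the paper does not split regions at all: it passes to polar coordinates about the origin, rewrites the integral as $\rho^{\alpha-\bbt}\int_0^{R/\rho}t^{n-\bbt-1}K_\alpha(t)\,\dt$ with the angular kernel $K_\alpha(t)=\int_{|x'|=1}|tx'-y'|^{\alpha-n}\,\dhn(x')$, bounds $K_\alpha$ explicitly for $t>1$ from the closed form \eqref{94}, transfers that bound to $t<1$ via the inversion identity $K_\alpha(s)=s^{-n+\alpha}K_\alpha(1/s)$, and then evaluates $\int_0^1 t^{n-\bbt-1}K_\alpha(t)\,\dt$ as a Beta-type integral while bounding $\int_1^{R/\rho}$ crudely; the two summands in the displayed constant are exactly these two radial integrals. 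Your dilation-plus-splitting argument reaches the same inequality, with some $C=C(n,\alpha,\bbt)$, much faster; just record as well that on the leftover middle region $\{|z|\le 2\}\setminus\bigl(B_{1/2}(e_y)\cup B_{1/2}(0)\bigr)$ both factors are bounded, so it contributes a constant $\le C\tau^{2\alpha}$. For \ref{28} the paper again goes through a spherical kernel, while your one-line use of $|x-y|\ge|y|/2$ suffices. For \ref{167} the paper splits at $|y|=|x|$ and, on $K\cap B_{|x|}(0)$, peels off $|x-y|^{(1-a)(b-1)}\le(2|x|)^{(1-a)(b-1)}$ so as to apply part \ref{23} to what remains; your split at $|y|=|x|/2$ decouples the two singularities directly, which is also fine on the piece $K_1$.

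Two caveats. First, your route does not reproduce the literal constants in \eqref{21} and \eqref{34}, and the sketched repair for \ref{23} does not work as described: the spherical average $\int_{S^{n-1}}|y+\rho\omega|^{-\bbt}\,\ds(\omega)$ is a hypergeometric-type quantity, not a Beta integral, and it even diverges at $\rho=|y|$ when $\bbt\ge n-1$, a range the hypothesis $\bbt<n-\alpha$ permits since $\alpha<1$; in the paper the factor $\Gamma(n-\bbt-\alpha)\Gamma(\alpha/2)/\Gamma(n-\bbt-\alpha/2)$ arises from the radial integral $\int_0^1 t^{n-\bbt-1}K_\alpha(t)\,\dt$ after the pointwise bound on $K_\alpha$, and the $1/\alpha$ from the range $t\in(1,R/\rho)$. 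Similarly, the constant your argument yields in \ref{28}, namely $2^{n+1}\pi^{n/2}/[\Gamma(n/2)(n-\bbt)]$, is not dominated by the displayed one for large $n$, so strictly speaking you prove the inequalities with different explicit constants. This is harmless for every use of the lemma in the paper, where only the dependence $C=C(n,\alpha,\bbt)$ (resp.\ $C=C(n,\gamma,\bbt)$) matters, but it does deviate from the statement as written. Second, and more substantively, your estimate on $K_2$ in \ref{167} tacitly assumes $n-(1-a)b>0$: since $b\ge 1$ is unrestricted, the case $(1-a)b\ge n$ is allowed, and there the lower bound $|x-y|\ge|x|/2$ controls the kernel from below, not above. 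The fix is immediate, since in that regime $|x-y|^{(1-a)b-n}\le(2r)^{(1-a)b-n}$ and $\int_{K_2}|y|^{-\bbt}\,\dye\le C\,r^{n-\bbt}\le C\,r^{n}|x|^{-\bbt}$, but it needs to be said; the paper's factorization $|x-y|^{-(n-(1-a)b)}=|x-y|^{(1-a)(b-1)}\,|x-y|^{-(n-(1-a))}$ avoids this dichotomy altogether.
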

\begin{proof}
\ref{23} We set $r=|x|$ and $\rho=|y|$. Then $x=r x^{\prime}$, $y=\rho y^{\prime}$, where $|x^{\prime}|=|y^{\prime}|=1$. Therefore,
$$
 \int_{B_R(0)} \frac{1}{|x-y|^{n-\alpha}|x|^{\bbt}} \, \dxe = \int_0^R \frac{r^{n-1}}{r^{\bbt} \rho ^{n-\alpha}}\left(\int_{|x^{\prime}|=1} \frac{1}{\left|\frac{r}{\rho} x^{\prime} - y^{\prime}\right|^{n-\alpha}}  \, \dhn (x^{\prime}) \right) \, \dr .
$$

We now set $t=\frac{r}{\rho}$; hence,
\begin{equation} \label{20}
\int_{B_R(0)} \frac{1}{|x-y|^{n-\alpha}|x|^{\bbt}} \, \dxe =  \frac{1}{\rho ^{\bbt-\alpha}}  \int_0^{\frac{R}{\rho}} t^{n-\bbt -1}\left(\int_{|x^{\prime}|=1} \frac{1}{\left|tx^{\prime}- y^{\prime}\right|^{n-\alpha }} \, \dhn (x^\prime)\right) \, \dt .
\end{equation}

We define
$$
K_\alpha (t):=\int_{|x^{\prime}|=1} \frac{1}{\left|tx^{\prime}- y^{\prime}\right|^{n-\alpha}} \, \dhn (x^\prime).
$$

As in \cite[pp. 248 - 251]{ferrari2012radial},  we find
\begin{equation}\label{94}
K_\alpha(t)=2 \frac{\pi^{\frac{n-1}{2}}}{\Gamma\left(\frac{n-1}{2}\right)} \int_0^\pi \frac{\sin ^{n-2} \theta }{\left(1-2 t\cos  \theta +t^2\right)^{\frac{n-\alpha}{2}}} \, \textnormal{d} \theta .
\end{equation}

For $t>1$, we have
\begin{equation*}
K_\alpha (t) = 2 \frac{\pi^{\frac{n-1}{2}}}{\Gamma\left(\frac{n-1}{2}\right)}  t ^{-n+2}\left(t^2-1\right)^{-1+\alpha}    \int_0^\pi \sin ^{n-2}  \theta \frac{\left(\sqrt{t^2-\sin ^2 \theta }+\cos \theta \right)^{1-\alpha}}{\sqrt{t ^2-\sin ^2 \theta }} \, \textnormal{d} \theta.
\end{equation*}

Since $\sqrt{t^2-\sin ^2 \theta}\geq |\cos \theta|$,
\begin{equation}\label{16}
\begin{aligned}
K_\alpha (t) \leq  & \, \displaystyle 2 \frac{\pi^{\frac{n-1}{2}}}{\Gamma\left(\frac{n-1}{2}\right)}  t ^{-n+2}\left(t^2-1\right)^{-1+\alpha}  2^{1-\alpha}   \int_0^\pi \frac{1}{|\cos \theta| ^\alpha} \, \textnormal{d} \theta \\
   = & \, \displaystyle 2 ^{2-\alpha}\frac{\pi^{\frac{n}{2}}}{\Gamma\left(\frac{n-1}{2}\right)}  t ^{-n+2}\left(t^2-1\right)^{-1+\alpha}  \frac{\Gamma\left(\frac{1-\alpha}{2}\right)}{\Gamma\left(1-\frac{\alpha}{2}\right)}.
\end{aligned}
\end{equation}

Similarly, if $s<1 $,
\begin{equation}\label{17}
\begin{aligned}
K_\alpha (s)= & \, \displaystyle s^{-n+\alpha} K_\alpha \left(\frac{1}{s}\right) \\
 \leq & \, \displaystyle 2 ^{2-\alpha}\frac{\pi^{\frac{n}{2}}}{\Gamma\left(\frac{n-1}{2}\right)}  s ^{-\alpha}\left(1-s^2\right)^{-1+\alpha}  \frac{\Gamma\left(\frac{1-\alpha}{2}\right)}{\Gamma\left(1-\frac{\alpha}{2}\right)}.
\end{aligned}
\end{equation}

From, \cite{gradshteyn2007table} or \cite{ abramowitz1965formulas, magnus1966formulas},
\begin{equation} \label{18}
\int _0^1 t^{n-\bbt -1} K_\alpha (t) \, \dt \leq  
2 ^{1-\alpha}\frac{\pi^{\frac{n}{2}}}{\Gamma\left(\frac{n-1}{2}\right)}  \cdot \frac{\Gamma\left(\frac{1-\alpha}{2}\right)}{\Gamma\left(1-\frac{\alpha}{2}\right)} \cdot \frac{\Gamma (n-\bbt -\alpha)\Gamma \left(\frac{\alpha}{2}\right)}{ \Gamma \left( n-\bbt -\frac{\alpha}{2} \right)}
\end{equation}
and, since $t^{-\bbt + 1}\leq t$ if $t\geq 1$:
\begin{equation} \label{19}
\int _1^{\frac{R}{\rho}} t^{n-\bbt -1} K_\alpha (t) \, \dt \leq  \frac{2 ^{1-\alpha}}{\alpha} \cdot \frac{\pi^{\frac{n}{2}}}{\Gamma\left(\frac{n-1}{2}\right)}   \cdot \frac{\Gamma\left(\frac{1-\alpha}{2}\right)}{\Gamma\left(1-\frac{\alpha}{2}\right)}\left(\frac{R}{\rho}\right)^{2\alpha}.
\end{equation}

From \eqref{20}, \eqref{18}, and \eqref{19}, we obtain \eqref{21}.

\medskip

\ref{28} Similarly to \ref{23}, we set $r=|x|$ and $\rho=|y|$. Then $x=r x^{\prime}$ and $y=\rho y^{\prime}$, where $|x^{\prime}|=|y^{\prime}|=1$. Then
\begin{equation} \label{29}
\int_{B_R(0)} \frac{1}{|x-y|^{n+\gamma}|x|^{\bbt}} \, \dxe =  \frac{1}{\rho ^{\bbt + \gamma}}  \int_0^{\frac{R}{\rho}} t^{n-\bbt -1}\hat K_\gamma (t) \, \dt .
\end{equation}
where $t=\frac{r}{\rho}$, and  for $t>1$,
\begin{align*}
 \hat K_\gamma (t):= & \, \displaystyle \int_{|x^{\prime}|=1} \frac{1}{\left|tx^{\prime}- y^{\prime}\right|^{n+\gamma}} \, \dhn (x^\prime)\\
= & \, \displaystyle 2 \frac{\pi^{\frac{n-1}{2}}}{\Gamma\left(\frac{n-1}{2}\right)} t^{-n+2}\left(t^2-1\right)^{-1 - \gamma} \int_0^\pi \sin ^{n-2} \theta \frac{\left(\sqrt{t^2-\sin ^2 \theta}+\cos \theta\right)^{1+\gamma}}{\sqrt{t^2-\sin ^2 \theta}} \, \textnormal{d} \theta.
\end{align*}

Then, since $t^2  -1\geq \sqrt{t^2 -\sin ^2\theta}$ if $t\geq 2$,
\begin{equation*}\label{30}
\hat K_\gamma (t) \leq  2^{2+\gamma} \frac{\pi^{\frac{n+1}{2}}}{\Gamma\left(\frac{n-1}{2}\right)} t^{-n+2}(t^2 -1)^{-1} .
\end{equation*}

If $0<s\leq 1/2$, this implies $(1-s^2)^{-1}\leq 3/4$, and
\begin{equation}\label{31}
\begin{aligned}
\hat K_\gamma (s)= & \, \displaystyle s^{-n-\gamma} \hat K_\gamma \left(\frac{1}{s}\right)\\
\leq & \, \displaystyle  \frac{ 2^{4+\gamma} \pi^{\frac{n+1}{2}}}{ 3 \Gamma\left(\frac{n-1}{2}\right)} s^{-\gamma} .
\end{aligned}
\end{equation}

Hence,
\begin{equation} \label{32}
\int_{B_R(0)} \frac{1}{|x-y|^{n+\gamma}|x|^{\bbt}} \, \dxe \leq  \frac{2^{4+\gamma}\pi^{\frac{n+1}{2}}}{3\Gamma\left(\frac{n-1}{2}\right) (n- \bbt -\gamma )} \cdot  \frac{1}{|y| ^{\bbt + \gamma}} \left( \frac{R}{|y|}\right) ^{n- \bbt -\gamma },  
\end{equation}
which proves \eqref{34}.

\medskip

\ref{167} Let $x\in K\subset B_r (0)$ with $x\neq 0$, we have
\begin{align*}
\int_{K} \frac{1}{|x-y|^{n-(1-a) b}|y|^{\bbt}} \, \dye = & \, \displaystyle \displaystyle \int_{K \setminus B_{|x|} (0)} \frac{1}{|x-y|^{n-(1-a) b}|y|^{\bbt}} \, \dye + \int_{ K\cap B_{|x|} (0)} \frac{1}{|x-y|^{n-(1-a) b}|y|^{\bbt}} \, \dye\\[5pt]
\leq & \, \displaystyle \frac{1}{|x|^{\bbt}} \int _{B_{2r}( {x} ) }  \frac{1}{|x-y|^{n-(1-a) b}} \, \dye + \int _{B_{|x|} (0)} \frac{|x-y|^{(1-a) (b-1)}}{|x-y|^{n-(1-a)}|y|^{\bbt}} \, \dye\\[7pt]
\leq & \, \displaystyle \frac{|\partial B_1(0)|(2r)^{(1-a)b}}{(1-a)b|x|^{\bbt}} + (2|x|)^{(1-a) (b-1)} \int _{B_{|x|} (0)} \frac{1}{|x-y|^{n-(1-a)}|y|^{\bbt}} \, \dye.
\end{align*}
Applying \eqref{21} in \ref{23}, we conclude \eqref{168}. \end{proof}

\begin{lemma} \label{176}
Let $\Omega \subset \mathbb{R}^n$ be a {bounded} open set. Assume that conditions \eqref{174} and \eqref{175} on $s$ and $p$ hold. Furthermore, for all measurable  sets $K\Subset \Omega$, suppose there exist constants $c_0, c_1>0$ such that
\begin{gather}
c_0 \leq \mv (x), \quad \text { a.e. } x\in K, \label{172}\\[5pt]
\int _{K } \frac{\mv (y)}{|x-y|^{n-(1-s(x,y))p(x,y)}} \, \dye \leq c_1\mv (x), \quad \text { a.e. } x\in K \setminus \{0\}, \label{169}\\[5pt]
 {\int _{\Omega} \mv \, \dxe < \infty}.\label{173}
\end{gather}

If $u\in W^{s(x,y),p(x,y)} (\Omega ; \mv) \cap L^{{\pl}} _{\loc}(\Omega ; \mv ^2)$ and $\zeta : \Omega \rightarrow \mathbb{R}$ is a Lipschitz function with $\spt \zeta {\Subset} \Omega$, then  
$$
\zeta u\in W^{s(x,y),p(x,y)} (\Omega ; \mv) \cap L^{{\pl}} (\Omega ; \mv ^2).
$$
\end{lemma}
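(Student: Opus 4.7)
The plan is to verify separately that $\zeta u$ lies in each of the three spaces whose intersection defines the target: $L^{\rep(x)}(\Omega;\mv)$, $L^{\pl}(\Omega;\mv^2)$, and the Gagliardo-modular space. Let $K:=\spt\zeta\Subset\Omega$, $M:=\|\zeta\|_\infty$, and $L:=\lip\zeta$, all finite because $\zeta$ is Lipschitz with compact support. The two Lebesgue pieces are straightforward: since $|\zeta u|\le M|u|$ and $\zeta u$ is supported in $K$, the hypothesis $u\in L^{\pl}_{\loc}(\Omega;\mv^2)$ immediately gives $\int_\Omega|\zeta u|^{\pl}\,\mv^2\,\dxe<\infty$, and $u\in W^{s(x,y),p(x,y)}(\Omega;\mv)\subset L^{\rep(x)}(\Omega;\mv)$ gives $\varrho_{\rep(x),\Omega,\mv}(\zeta u)<\infty$.

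The heart of the proof is showing $\tilde\varrho_{s,p,\Omega,\mv}(\zeta u)<\infty$. Starting from
$$\zeta(x)u(x)-\zeta(y)u(y)=\zeta(x)\bigl(u(x)-u(y)\bigr)+u(y)\bigl(\zeta(x)-\zeta(y)\bigr)$$
and the convexity inequality $(a+b)^{p(x,y)}\le 2^{\pl-1}(a^{p(x,y)}+b^{p(x,y)})$, I would reduce to controlling two double integrals. The first, coming from $\zeta(x)(u(x)-u(y))$, is bounded pointwise by $(2M)^{\pl}|u(x)-u(y)|^{p(x,y)}$ and hence contributes at most $(2M)^{\pl}\tilde\varrho_{s,p,\Omega,\mv}(u)<\infty$. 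The second,
$$J:=\int_\Omega\int_\Omega\frac{|u(y)|^{p(x,y)}|\zeta(x)-\zeta(y)|^{p(x,y)}}{|x-y|^{n+s(x,y)p(x,y)}}\,\mv(x)\mv(y)\,\dxe\,\dye,$$
has an integrand vanishing on $(\Omega\setminus K)\times(\Omega\setminus K)$, so after choosing an intermediate compact set $K\Subset K'\Subset\Omega$ with $d:=\dist(K,\partial K')>0$ I would decompose $J=J_1+J_2+J_3$ over $K'\times K'$, $K\times(\Omega\setminus K')$, and $(\Omega\setminus K')\times K$ respectively.

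On $K'\times K'$, the Lipschitz bound $|\zeta(x)-\zeta(y)|^{p(x,y)}\le\max(1,L)^{\pl}|x-y|^{p(x,y)}$ converts the singular kernel into $|x-y|^{-n+(1-s(x,y))p(x,y)}$; after dominating $|u(y)|^{p(x,y)}\le 1+|u(y)|^{\pl}$ and applying Fubini together with hypothesis \eqref{169} (valid with $x$ and $y$ interchanged thanks to the symmetry of $s$ and $p$), this piece is controlled by $c_1\max(1,L)^{\pl}\int_{K'}(1+|u|^{\pl})\,\mv^2\,\dye$, which is finite by $u\in L^{\pl}_{\loc}(\Omega;\mv^2)$. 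On the two cross-pieces $J_2$ and $J_3$, $|x-y|\ge d$ and $\zeta$ vanishes on one of the variables, so the nonlocal kernel is tame; hypothesis \eqref{173} is the precise tool for absorbing $\mv$ against the decay at the tail of $\Omega$, while $u\in L^{\pl}_{\loc}$ on the shell $K'\setminus K$ controls the $u$-factor after the split $|u|^{p(x,y)}\le 1+|u|^{\pl}$.

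The main obstacle I expect lies in the $K'\times K'$ piece: the crude replacement $|u(y)|^{p(x,y)}\le 1+|u(y)|^{\pl}$ is forced by the variable exponent and is what makes the strictly stronger local hypothesis $u\in L^{\pl}_{\loc}(\Omega;\mv^2)$ necessary (the natural $L^{\rep(x)}$-regularity built into $u\in W^{s,p}$ does not suffice). A subsidiary concern is ensuring that \eqref{169} applies uniformly away from the singularity of $\mv$ at the origin; when $0\in K'$ this must be reconciled with estimates of the type developed in Lemma~\ref{27}, and the constant term generated by the $1$ in $1+|u|^{\pl}$ requires local integrability of $\mv^2$, which is implicit in \eqref{172} and \eqref{169}.
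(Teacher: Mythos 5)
Your overall strategy (product-rule splitting of $\zeta(x)u(x)-\zeta(y)u(y)$, a near/far decomposition, hypothesis \eqref{169} plus the Lipschitz bound and $u\in L^{p^+}_{\loc}(\Omega;\mv^2)$ for the near part, \eqref{173} for the far part) is the same as the paper's, but there is a genuine gap in the cross term you call $J_2$, i.e.\ the piece of
$$J=\int_\Omega\int_\Omega\frac{|u(y)|^{p(x,y)}|\zeta(x)-\zeta(y)|^{p(x,y)}}{|x-y|^{n+s(x,y)p(x,y)}}\,\mv(x)\mv(y)\,\dxe\,\dye$$
over $K\times(\Omega\setminus K')$. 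There $\zeta(y)=0$, so the integrand is $|u(y)|^{p(x,y)}|\zeta(x)|^{p(x,y)}|x-y|^{-n-s(x,y)p(x,y)}\mv(x)\mv(y)$, and the factor $u$ is evaluated at the \emph{far} variable $y\in\Omega\setminus K'$, not on the shell $K'\setminus K$ as you claim. After the forced split $|u(y)|^{p(x,y)}\le 1+|u(y)|^{\pl}$ you would need
$\esssup_{x\in K}\int_{\Omega\setminus K'}|u(y)|^{\pl}\,|x-y|^{-n-s(x,y)p(x,y)}\mv(y)\,\dye<\infty$,
and none of the hypotheses gives this: \eqref{173} controls only the kernel against $\mv$ (no $|u|^{\pl}$ factor), $u\in L^{\pl}_{\loc}(\Omega;\mv^2)$ is purely local, and the global information $u\in L^{\rep(x)}(\Omega;\mv)$ carries the wrong exponent, since $p(x,y)$ may exceed $\rep(y)=p(y,y)$ at the far point. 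So, as written, the finiteness of $J_2$ does not follow.

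The source of the problem is that you apply the product-rule split globally before decomposing. The paper avoids this by splitting only in the near region: in the far region one of $\zeta(x),\zeta(y)$ vanishes identically, so $|\zeta(x)u(x)-\zeta(y)u(y)|$ equals $|\zeta u|$ evaluated at the \emph{near} variable, and the far integration reduces to $\int\mv\cdot|x-y|^{-n-sp}$, which is exactly what \eqref{173} (together with $|x|\le 2|x-y|$ there, and \eqref{172}) handles, while the $u$-factor stays on $\spt\zeta$ where $L^{p^+}_{\loc}(\Omega;\mv^2)$ applies. Your argument can be repaired in place by writing $|u(y)|\le|u(y)-u(x)|+|u(x)|$ inside $J_2$, sending the first contribution into the finite Gagliardo modular and the second back to the near variable $x\in K$; but that is precisely the paper's treatment in disguise. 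A minor additional point: \eqref{173} is stated for complements of balls $B_{2r}(0)$, so for a generic $K'$ you should fix $r$ with $\spt\zeta\subset B_r(0)$, apply \eqref{173} outside $B_{2r}(0)$, and treat the bounded leftover region (where $|x-y|\ge d$) using local integrability of the weight.
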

\begin{proof}
 
 We have
\begin{equation}\label{170}
|\zeta (x) - \zeta (y) |\leq C_2 |x-y|, \quad \forall  (x,y) \in {\Omega \times \Omega} ,
\end{equation}
for some constant $C_2 > 1$.

{Since $\spt \zeta \Subset \Omega$, there exist open sets $U_1, U_2$ such that
$$
\operatorname{supp} \zeta \subset U_1 \Subset U_2 \Subset \Omega .
$$}

 Employing the identity $ab-bd = a(b-d) + d(a-b)$ for all $a,b,c,d \in \mathbb{R}$,  {and using the decomposition
$$
\Omega \times \Omega = \left(\mathcal S\times \mathcal S \right)\cup  \left[ \mathcal S \times \left( \Omega \setminus \mathcal S \right) \right] \cup \left[   \left( \Omega \setminus \mathcal S \right) \times \mathcal S \right] \cup \left[   \left( \Omega \setminus \mathcal S \right) \times \left( \Omega \setminus \mathcal S \right) \right], \quad \mathcal{S} \subset \Omega, 
$$
we obtain}
\begin{align*}
\int _{\Omega} \int _{\Omega} & \frac{|\zeta (x)u (x) -\zeta (y) u (y)|^{p(x,y)}}{|x-y|^{n+s(x,y)p(x,y)}}  \mv (x) \mv (y) \, \dxe \dye \\[5pt]
= & \, \displaystyle \int _{  {U_2}  } \int _{ {U_2} } \frac{|\zeta (x)u (x) -\zeta (y) u (y)|^{p(x,y)}}{|x-y|^{n+s(x,y)p(x,y)}}  \mv (x) \mv (y) \, \dxe \dye \\[5pt]
& \, \displaystyle + 2\int _{ {U_2} } \int _{ {\Omega \setminus  U_2 } } \frac{|\zeta (y) u (y)|^{p(x,y)}}{|x-y|^{n+s(x,y)p(x,y)}}  \mv (x) \mv (y) \, \dxe \dye\\[7pt]
 \leq & \, \displaystyle 2^{{\pl} -1}\int _{{U_2} } \int _{{U_2} } \frac{ \left|u (x)(\zeta (x) -\zeta (y))\right|^{p(x,y)}   }{|x-y|^{n+s(x,y)p(x,y)}}  \mv (x) \mv (y)\,  \dxe \dye \\[5pt]
 & \, \displaystyle + 2^{{\pl} -1}\int _{{U_2}} \int _{ {U_2} } \frac{  \left| \zeta (y)(u (x) -u (y))\right|^{p(x,y)}  }{|x-y|^{n+s(x,y)p(x,y)}}  \mv (x) \mv (y)\, \dxe \dye \\[5pt]
& \, \displaystyle + 2\int _{ {U_1}} \left( | u (y)|^{{\pl}} +1\right)\mv (y)\int _{{\Omega \setminus  U_2}} \frac{1}{|x-y|^{n+s(x,y)p(x,y)}}  \mv (x) \, \dxe \dye \\[7pt]
=: & \, \displaystyle I_1 + I_2 + I_3. 
 \end{align*}

By \eqref{170} and \eqref{169}, we have
\begin{equation}\label{171}
I_1 \leq c_1 C_2^{{\pl}}2^{{\pl} -1}    \int _{{U_2}} \left( |u|^{{\pl}} +1 \right) \mv ^2 \, \dxe <\infty,
\end{equation}
since $u\in L_{\loc}^{{\pl}}(\Omega ; \mv ^2)$.

Moreover, $I_2$ is finite because $u \in W^{s(x, y), p(x, y)}(\Omega ; \mv)$. We also have $ I_3< \infty$ by \eqref{172}, \eqref{173}, and the fact that for all $(x,y)\in {(\Omega \setminus U_2)\times U_1}$, 
{$$
\frac{1}{|x-y|^{n+s(x, y) p(x, y)}} \mv(x)\leq \left[ \esssup _{(x,y)\in(\Omega \setminus U_2)\times U_1} \delta ^{-n- s(x,y)p(x,y)} \right] \mv (x) , 
$$
where $\delta = \dist (\Omega  \setminus U_2 , U_1 )>0 $.}

Therefore, we conclude the proof of the lemma.\end{proof}

The following result will be useful when using test functions.
\begin{corollary}
Let $\Omega \subset \mathbb{R}^n$ be a {bounded} open set. Assume that conditions \eqref{174} and \eqref{175} on $s$ and $p$ hold; also, $n-\beta -(1-s^-) >0$. If $u\in W^{s(x,y),p(x,y)} (\Omega ; |x|^{-\beta}) \cap L^{{\pl}} _{\loc}(\Omega ; |x| ^{-2\beta})$ and $\zeta : \Omega \rightarrow \mathbb{R}$ is a Lipschitz function with $\spt \zeta {\Subset} \Omega$, then  
$$
\zeta u\in W^{s(x,y),p(x,y)} (\Omega ; |x|^{-\beta}) \cap L^{{\pl}} (\Omega ; |x| ^{-2\beta}).
$$
\end{corollary}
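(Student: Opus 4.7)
The plan is to apply Lemma \ref{176} with the weight $\mv(x) = |x|^{-\beta}$, so that $\mv(x)^2 = |x|^{-2\beta}$ and the spaces appearing in the hypothesis and conclusion of that lemma match exactly the ones in the present corollary. It therefore suffices to verify the three structural conditions \eqref{172}, \eqref{169}, and \eqref{173} on $\mv$ for this choice of weight.

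Condition \eqref{172} is immediate: every compact $K \Subset \Omega$ sits inside some ball $B_{R_0}(0)$, whence $|x|^{-\beta} \geq R_0^{-\beta} =: c_0 > 0$ for a.e.\ $x \in K$. Condition \eqref{173} is only slightly more involved: for $y \in B_r(0)$ and $x \in \Omega \setminus B_{2r}(0)$ one has $|x| \geq 2r$, so splitting the integration region into $\{2r \leq |x| \leq 1\} \cap \Omega$ (bounded measure, integrand dominated by $(2r)^{-(n + \beta + s^+\pl)}$ uniformly in $y$) and $\{|x| > 1\} \cap \Omega$ (integrand dominated by the $y$-independent tail $|x|^{-(n + \beta + s^- p^-)}$, which is integrable at infinity) yields a finite bound independent of $y$.

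The substantive step is condition \eqref{169}, which is essentially the content of Lemma \ref{27}\ref{167}. I would apply that lemma pointwise with $\bbt = \beta$, $a = s(x,y)$, and $b = p(x,y)$; the structural inequality $n - \beta - (1 - a) \geq n - \beta - (1 - s^-) > 0$ (using $s^- \leq s(x,y)$ and the standing hypothesis) together with $b \geq p^- > 1$ supply the lemma's assumptions. The main obstacle is that Lemma \ref{27}\ref{167} produces a constant $C(r, a, b)$ depending on $a$ and $b$, whereas \eqref{169} demands a uniform $c_1$. Tracing that constant through the proof of Lemma \ref{27}\ref{167}, it decomposes into factors of the form $((1-a)b)^{-1}$, $(2|x|)^{(1-a)(b-1)}$, $r^{(1-a)b}$, and Gamma values $\Gamma((1-\alpha)/2)$, $\Gamma(\alpha/2)$, $\Gamma(n-\beta-\alpha)$, $\Gamma(n-\beta-\alpha/2)$, $\Gamma(1-\alpha/2)$ with $\alpha = 1 - a$. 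Because $a$ varies in $[s^-, s^+] \subset (0,1)$, $b$ in $[p^-, \pl] \subset (1, \infty)$, and $|x|$ in a bounded subset of $\mathbb{R}^n$, every one of these exponents and Gamma arguments stays in a compact subinterval on which the corresponding function is continuous and bounded away from $0$ and $\infty$. This yields a finite $c_1$ independent of $(x, y)$, verifying \eqref{169}. Lemma \ref{176} then immediately delivers the conclusion.
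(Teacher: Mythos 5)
Your proposal is correct and follows essentially the same route as the paper: invoke Lemma \ref{176} with $\mv(x)=|x|^{-\beta}$ and verify the kernel condition \eqref{169} via Lemma \ref{27}\ref{167}, the hypothesis $n-\beta-(1-s^-)>0$ guaranteeing its applicability for all $a=s(x,y)$, $b=p(x,y)$. Your extra care in checking \eqref{172}, \eqref{173} and the uniformity of the constant $C(r,a,b)$ over $a\in[s^-,s^+]$, $b\in[p^-,\pl]$ is a welcome detail the paper leaves implicit, but it does not change the argument.
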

\begin{proof}
By Lemma \ref{27} \ref{167}, we have that $\mv (x) = |x|^{-\beta}$ satisfies condition \eqref{169} in Lemma \ref{176}. {Furthermore, $\mv(x)$ also satisfies \eqref{173} since $n>\beta$}. Hence, employing this last lemma, we conclude the proof of the corollary.\end{proof}

Another useful result is the following:
\begin{lemma} \label{177} Let $\Omega \subset \mathbb{R}^n$ be an open  set. Assume that conditions \eqref{174} and \eqref{175} on $s$ and $p$ hold.
\begin{enumerate}[label=(\roman*)]
\item \label{178} Let $\mv_0$ a weight satisfying $\mv _0^{-1} \in L_{\loc}^{\frac{1}{q-1}} (\mathbb{R} ^n)$, where  $q\in (1,\infty)$. If  $u,v\in W^{1,q} _{\loc} (\Omega ; \mv _0 ) \cap L^{\infty}_{\loc} (\Omega)$, then $u v\in W^{1,q} _{\loc} (\Omega ; \mv _0) \cap L^{\infty}_{\loc} (\Omega)$.

\item \label{179} If  $u,v\in  W^{s(x,y),p(x,y)} (\Omega ; \mv) $, then $\max \{u,v \}\in  W^{s(x,y),p(x,y)} (\Omega ; \mv)$.
\end{enumerate}
\end{lemma}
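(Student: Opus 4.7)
The plan is to handle the two parts separately. Both reduce to elementary pointwise facts combined with a direct verification of the weighted integrability.

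For part \ref{178}, my strategy is to first embed $W^{1,q}_{\loc}(\Omega;\mv _0)$ into $W^{1,1}_{\loc}(\Omega)$ so that the classical product rule becomes applicable. Concretely, for any compact $K\Subset\Omega$, Hölder's inequality with exponents $q$ and $q/(q-1)$ gives
\begin{equation*}
\int_K|\nabla u|\,\dxe \leq \left(\int_K|\nabla u|^q\mv _0\,\dxe\right)^{1/q}\left(\int_K \mv _0^{-1/(q-1)}\,\dxe\right)^{(q-1)/q}<\infty,
\end{equation*}
and the same bound works for $u,v,\nabla v$. Hence $u,v\in W^{1,1}_{\loc}(\Omega)\cap L^\infty_{\loc}(\Omega)$, and the standard Leibniz rule yields $\nabla(uv)=u\nabla v+v\nabla u$ distributionally on $\Omega$. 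Using the $L^\infty_{\loc}$ bounds of $u$ and $v$,
\begin{equation*}
\int_K|\nabla(uv)|^q\mv _0\,\dxe\leq 2^{q-1}\|u\|_{L^\infty(K)}^q\int_K|\nabla v|^q\mv _0\,\dxe+2^{q-1}\|v\|_{L^\infty(K)}^q\int_K|\nabla u|^q\mv _0\,\dxe<\infty,
\end{equation*}
and analogously $uv\in L^q_{\loc}(\Omega;\mv _0)$. The $L^\infty_{\loc}$ membership of $uv$ is trivial.

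For part \ref{179}, I would begin with the sharp pointwise inequality
\begin{equation*}
|\max\{a,b\}-\max\{c,d\}|\leq \max\{|a-c|,|b-d|\},\qquad a,b,c,d\in\mathbb{R},
\end{equation*}
obtained by a short case analysis on which term realizes each maximum. Setting $a=u(x),b=v(x),c=u(y),d=v(y)$ and raising to the $p(x,y)$-th power, together with $\max\{A,B\}^{p(x,y)}\leq A^{p(x,y)}+B^{p(x,y)}$ for $A,B\geq 0$, yields after integration the modular bound
\begin{equation*}
\tilde\varrho_{s,p,\Omega,\mv}(\max\{u,v\})\leq \tilde\varrho_{s,p,\Omega,\mv}(u)+\tilde\varrho_{s,p,\Omega,\mv}(v)<\infty.
\end{equation*}
For the Lebesgue part, the elementary bound $|\max\{u,v\}|^{\rep(x)}\leq 2^{\pl-1}(|u|^{\rep(x)}+|v|^{\rep(x)})$ shows that $\max\{u,v\}\in L^{\rep(x)}(\Omega;\mv)$, and hence $\max\{u,v\}\in W^{s(x,y),p(x,y)}(\Omega;\mv)$.

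The only delicate point is the reduction $W^{1,q}_{\loc}(\Omega;\mv _0)\hookrightarrow W^{1,1}_{\loc}(\Omega)$ in part \ref{178}, which is exactly where the Muckenhoupt-type hypothesis $\mv _0^{-1}\in L^{1/(q-1)}_{\loc}(\mathbb{R}^n)$ enters; once this is in place, the classical product rule does the rest. Part \ref{179} is essentially the application of a sharp pointwise inequality for the $\max$ function, so I do not expect any real obstacle there.
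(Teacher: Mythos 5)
Your proof is correct and takes essentially the same route as the paper: part \ref{178} rests on exactly the embedding $W^{1,q}_{\loc}(\Omega;\mv_0)\subset W^{1,1}_{\loc}(\Omega)$ that the paper invokes (you supply the Hölder computation, using $\mv_0^{-1}\in L^{1/(q-1)}_{\loc}$, that the paper leaves implicit), followed by the classical Leibniz rule and the weighted bounds via the $L^\infty_{\loc}$ control. For part \ref{179} the paper instead uses the identity $\max\{u,v\}=\tfrac{1}{2}(u+v+|u-v|)$, but this is only a cosmetic difference from your pointwise inequality $|\max\{a,b\}-\max\{c,d\}|\leq\max\{|a-c|,|b-d|\}$: both express the $1$-Lipschitz character of the maximum and lead to the same modular bound, so your argument is a fully adequate substitute.
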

\begin{proof}
The proof of \ref{178} follows from the observation that \( W^{1,q}_{\loc}(\Omega; \mv _0) \subset W^{1,1}_{\loc}(\Omega) \). The proof of \ref{179} is derived from the definition of the weighted variable exponent fractional Sobolev space and the identity \( \max \{u,v\} = \frac{u+v + |u-v|}{2} \).\end{proof}



\subsection{Sobolev Inequalities}

We say $\mv$ satisfies the {\it doubling condition} (with respect to Lebesgue measure) if
$$
\mv \left(B_{2 r}(x)\right) \leq c \mv \left(B_r(x)\right)
$$
with $c$ independent of $r$ and $x$. We write $\mv \in D_{\infty}$ for such $\mv$.

 {A weight $\mv$ is said to belong to the {\it Muckenhoupt class} $A_q$, for $1 < q<\infty$,  if there exists a positive constant $A$ such that, for every ball $B \subset \mathbb{R}^n$,
\begin{equation*}
\left(\fint _B \mv \,  \dxe \right)\left(\fint _B \mv ^{-\frac{1}{q-1}} \, \dxe \right)^{q-1} \leq A.
\end{equation*}}

{The following lemma will be useful for applying subsequent results--concerning Lipschitz functions--to our problems.
\begin{lemma} 
Assume $U $ is a bounded domain with Lipschitz boundary, and let $ \mv \in A_q $. Suppose $ u \in W^{1, q}(U;\mv) $ for some $ 1 < q < \infty $. Then there exists a sequence of functions $ u_m \in C^{\infty}(\bar{U}) $ such that
$$
u_m \rightarrow u \quad \text { in } \quad  W^{1, q}(U;\mv).
$$
\end{lemma}
\begin{proof}
By \cite[Theorem 2.1.13]{turesson2000potentweig}, there exists an extension $ \hat{u} \in W^{1, q}(\mathbb{R}^n; \mv) $ of $ u$. Then, by \cite[Corollary 2.1.6]{turesson2000potentweig}, there is a sequence $ \hat{u}_m \in C^{\infty}(\mathbb{R}^n) $ such that
$$
\hat{u}_m \rightarrow \hat{u} \quad \text{in } \quad  W^{1, q}(\mathbb{R}^n; \mv).
$$
Defining $ u_m := \hat{u}_m|_U $, we conclude the proof of the lemma.\end{proof}}

For $0<\alpha, p_0, q<\infty$, we consider pairs $\mva, \mvb$ of weights in $D_{\infty}$ that satisfy
\begin{equation}\label{52}
\left(\frac{s}{h}\right)^\alpha \left(\frac{\mva (B_s(y))}{\mva (B_h(y))}\right)^{\frac{1}{q}} \leq c\left(\frac{\mvb (B_s(y))}{\mvb (B_h(y))}\right)^{\frac{1}{p_0}}, \quad 0<s \leq h,
\end{equation}
with $c$ independent of $x$, $s$, and $h$. For the next proposition, we only need to assume that \eqref{52} holds when $\alpha=1$ and $y \in B_{2 h}(x)$ for a certain $x$.

For a measurable function $f : B_r(x_0) \rightarrow \mathbb{R}$, we define
$$
(f)_{B_r(x_0),\mv}:=\frac{1}{\mv(B_r(x_0))} \int_{B_r(x_0)} f \mv \dxe.
$$
\begin{proposition} \label{58} ({\cite[Corollary 1.4]{chani1985weightedpeano}}). Let $u$  be Lipschitz continuous on a ball $B_r(x) \subset \mathbb R^n$, and assume that $1<p_0<q$, $\mva \in D_{\infty}$, the pair $\mva$, $\mvb$ satisfies \eqref{52} with $\alpha=1$ for $y \in B_{2 r}(x)$, and $\mvb \in A_{p_0}$. Then
$$
\left(\fint_{B_r(x)} \left|u-(u)_{B_r(x),\mva}  \right|^q  \dmva \right)^{\frac{1}{q}}  \leq   c r \left( \fint_{B_r(x)}|\nabla u|^{p_0} \dmvb \right)^{\frac{1}{p_0}}
$$
with $c$ independent of $u, r$, and $x$.
\end{proposition}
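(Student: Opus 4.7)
The final statement is the two-weight Sobolev--Poincaré inequality of Chanillo--Wheeden, and my plan is to invoke Corollary 1.4 of \cite{chani1985weightedpeano} directly. For orientation, let me describe the structure behind such a result and the role played by the hypotheses \eqref{52}, $\mva \in D_\infty$, and $\mvb \in A_{p_0}$.

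First, I would record the classical pointwise Poincaré representation for a Lipschitz function $u$ on $B_r(x)$:
\[
|u(y) - (u)_{B_r(x),\mva}| \leq C \int_{B_r(x)} \frac{|\nabla u(z)|}{|y-z|^{n-1}}\, \dxe (z), \qquad \text{a.e.\ } y \in B_r(x),
\]
with $C$ depending only on $n$ and the doubling constant of $\mva$. Passing from Lebesgue averages to $\mva$-averages on the left side costs only a doubling factor because $\mva \in D_{\infty}$. This reduces the problem to a two-weight estimate
\[
\|I_1 f\|_{L^q(B_r(x); \mva)} \leq C r \, \|f\|_{L^{p_0}(B_r(x); \mvb)}, \qquad f := |\nabla u|,
\]
for the localized Riesz potential $I_1 f(y) = \int_{B_r(x)} |y-z|^{-(n-1)} f(z)\, \dxe(z)$.

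Second, I would establish this two-weight Riesz inequality by dyadically decomposing $I_1 f(y)$ over the annuli $B_{2^{-k}r}(y) \setminus B_{2^{-k-1}r}(y)$. Each dyadic piece reduces to an $\mvb$-average of $f$ multiplied by the corresponding length scale; the balance condition \eqref{52} with $\alpha = 1$ is precisely what converts an $\mvb$-average on a ball of radius $s$ into an $\mva$-quantity on the same ball, producing the prefactor $r$ thanks to the exponent $\alpha = 1$. The $A_{p_0}$ hypothesis on $\mvb$ then upgrades the dyadic pointwise bound to a genuine $L^{p_0}(B_r(x);\mvb) \to L^q(B_r(x);\mva)$ inequality via the weighted boundedness of the Hardy--Littlewood maximal function on $L^{p_0}(\mvb)$.

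The main obstacle is the delicate interplay between the two-weight balance condition \eqref{52} and the single-weight Muckenhoupt condition on $\mvb$, which must be coordinated in order to obtain the correct linear scaling $cr$ (as opposed to a merely scale-invariant constant). Since this coordination is exactly what Chanillo--Wheeden carry out in full generality, my actual proof in the paper would consist of one line: apply their Corollary 1.4 with parameters $\alpha = 1$, $p_0$, $q$, and the weight pair $(\mva, \mvb)$ under consideration.
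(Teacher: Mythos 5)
Your proposal matches the paper exactly: Proposition \ref{58} is stated there as a quoted result, with no proof given beyond the citation of \cite[Corollary 1.4]{chani1985weightedpeano}, which is precisely your one-line conclusion. The background sketch you give (Riesz potential representation, the balance condition \eqref{52} with $\alpha=1$ supplying the factor $r$, and $\mvb \in A_{p_0}$ for the maximal function) is a reasonable account of the machinery behind Chanillo--Wheeden, but it is not needed and not present in the paper.
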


The following technical result will be used in this and subsequent sections.
\begin{lemma}\label{180}
Assume that $n>\bbt\geq 0$ and $q\in (1,\infty)$. Then:
\begin{enumerate}[label=(\roman*)]
\item \label{181} The weight $\mvbt (x) = |x|^{-\bbt}$ is doubling and belongs to the Muckenhoupt class $A_q$.

\item \label{66} If $x_0 \in \mathbb{R}^n$ and $h\geq s>0$, we have the estimate
\begin{equation}\label{68}
\frac{\mvbt (B_s(x_0))}{\mvbt (B_h(x_0))}\geq C(\bbt ,n) \left(\frac{s}{h}\right)^n, 
\end{equation}
where 
$$
\mvbt (F) := \int _{F} \dx \quad \text { and } \quad  \dx :=  \mvbt \dxe,
$$
for a measurable set $F \subset \mathbb{R}^n$.
\end{enumerate}
\end{lemma}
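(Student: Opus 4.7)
The plan is to deduce both parts from elementary size estimates of $\mvbt(B_r(x_0))$ obtained by splitting into the regime where the ball sits away from the origin and the regime where it is close to (or contains) it.

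I would start with part \ref{66}. Fix $x_0 \in \mathbb{R}^n$ and $0 < s \leq h$. In the regime $|x_0| \geq 2h$ the triangle inequality gives $|x_0|/2 \leq |x| \leq 3|x_0|/2$ for every $x \in B_h(x_0)$ and \emph{a fortiori} for $x \in B_s(x_0)$, so that
$$
(3|x_0|/2)^{-\bbt}|B_h(x_0)| \leq \mvbt(B_h(x_0)) \leq (|x_0|/2)^{-\bbt}|B_h(x_0)|,
$$
and analogously with $h$ replaced by $s$. The ratio $\mvbt(B_s(x_0))/\mvbt(B_h(x_0))$ is then at least $3^{-\bbt}(s/h)^n$. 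In the complementary regime $|x_0| < 2h$, the inclusion $B_s(x_0) \subset B_{3h}(0)$ shows that $|x| \leq 3h$ on $B_s(x_0)$, hence $\mvbt(B_s(x_0)) \geq (3h)^{-\bbt}|B_s(x_0)| = c_n (3h)^{-\bbt} s^n$; for the denominator I would use $B_h(x_0) \subset B_{3h}(0)$ together with the spherical-coordinate computation $\int_{B_{3h}(0)} |x|^{-\bbt}\, \dxe = c_n'(3h)^{n-\bbt}/(n-\bbt)$, where the hypothesis $n-\bbt>0$ is essential for finiteness. Taking the quotient yields \eqref{68} with a constant depending only on $n$ and $\bbt$.

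For part \ref{181}, the doubling property is exactly the above case analysis applied with $s=r$ and $h=2r$, which produces $\mvbt(B_{2r}(x_0)) \leq C(n,\bbt)\, \mvbt(B_r(x_0))$ in both regimes. To verify $\mvbt \in A_q$ it suffices to show that
$$
\left( \fint_{B_r(x_0)} |x|^{-\bbt}\, \dxe \right)\left( \fint_{B_r(x_0)} |x|^{\bbt/(q-1)}\, \dxe \right)^{q-1} \leq C,
$$
and the same dichotomy recurs. When $|x_0| \geq 2r$ the two averages are comparable to $|x_0|^{-\bbt}$ and $|x_0|^{\bbt/(q-1)}$ respectively, and their product telescopes to a constant. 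When $|x_0| < 2r$ I would enlarge to $B_{3r}(0)$ and compute both integrals explicitly; the averages scale like $r^{-\bbt}$ and $r^{\bbt/(q-1)}$, whose combination again collapses. Integrability of the second integrand is automatic since its exponent is nonnegative, while that of the first uses $0 \leq \bbt < n$.

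No serious obstacle is anticipated: the argument leans on nothing more than the triangle inequality and the one-dimensional integral $\int_0^R r^{n-1-\bbt}\, \dr$. The only care required is in tracking that the constants depend only on $n$, $\bbt$, and (for the Muckenhoupt condition) on $q$, which is transparent from the explicit estimates above.
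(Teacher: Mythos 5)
Your argument is correct. For part \ref{66} you are doing essentially what the paper does: the paper also reduces to a dichotomy between balls far from the origin and balls near the origin (it splits at $3h\leq |x_0|$, with a further subcase according to whether $B_h(x_0)$ meets $B_h(0)$, and in the near-origin case enlarges to $B_{3h}(0)$ and integrates $r^{n-1-\bbt}$ exactly as you do); your two-case split at $|x_0|\geq 2h$ is a slightly cleaner organization of the same computation, and your constants $3^{-\bbt}(s/h)^n$ and $c(n,\bbt)3^{-n}(s/h)^n$ match the paper's conclusion. The genuine difference is in part \ref{181}: the paper does not prove it at all, but simply cites standard references (Grafakos, Stein, Torchinsky) for the fact that $|x|^{-\bbt}$ with $0\leq\bbt<n$ is doubling and lies in $A_q$, whereas you verify the $A_q$ condition
$\bigl(\fint_{B}|x|^{-\bbt}\,\dxe\bigr)\bigl(\fint_{B}|x|^{\bbt/(q-1)}\,\dxe\bigr)^{q-1}\leq C$
directly by the same far/near dichotomy, and you obtain doubling as the special case $s=r$, $h=2r$ of \eqref{68}. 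Your route buys a self-contained proof with explicit dependence of the constant on $n$, $\bbt$, $q$; the paper's citation buys brevity and implicitly the sharper standard fact that $|x|^{a}\in A_q$ exactly when $-n<a<n(q-1)$, of which the present hypothesis $0\leq\bbt<n$ is the relevant special case. Both are sound; your near-origin verification correctly uses $\bbt<n$ for integrability of $|x|^{-\bbt}$ and the nonnegativity of the exponent $\bbt/(q-1)$ for the dual average, which are the only points where care is needed.
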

\begin{proof}
\ref{181}  The proof can be found in one of the following references: \cite[Chapter 7, Weighted Inequalities]{grafakos2014fourier} and \cite[Chapter 5, Weighted Inequalities]{stein1993harmonic}, or \cite[Chapter IX, $A_p$ Weights]{torchinsky1986harmonic}.

\medskip

\ref{66}   Let $h\geq s>0$. For all $x\in B_s (x_0)$, we have
\begin{equation*} 
|x_0|+h \geq |x_0| +s > |x|.
\end{equation*}
Hence,
\begin{equation}\label{54}
\begin{aligned}
I:= & \, \displaystyle \frac{\int _{B_s (x_0)} |x|^{-\bbt} \, \dxe }{\int _{B_h (x_0)} |x|^{-\bbt} \, \dxe} \\[4pt]
\geq & \, \displaystyle \frac{|B_s(x_0)|}{\int _{B_h (x_0)} |x|^{-\bbt} \, \dxe} \cdot \frac{1}{(|x_0| + h)^{\bbt}}.
\end{aligned}
\end{equation}

If $3h\leq |x_0|$, and noting that $|x_0|-h<|x|$ for all $x\in B_h(x_0)$, we have
\begin{equation}\label{55}
\begin{aligned}
I\geq & \, \displaystyle \frac{|B_s(x_0)|}{|B_h(x_0)|}\left( \frac{|x_0| - h}{|x_0| + h} \right)^{\bbt}\\[4pt]
 \geq & \, \displaystyle \frac{|B_s(x_0)|}{|B_h(x_0)|}\left( \frac{1}{2} \right)^{\bbt}.
 \end{aligned}
\end{equation}

If $3h> |x_0|$ and $|x|\geq h$ for all $ x\in B_h(x_0)$, then from \eqref{54},
\begin{equation}\label{56}
\begin{aligned}
I\geq & \, \displaystyle \frac{|B_s(x_0)|}{|B_h(x_0)|}\left( \frac{h}{|x_0| + h} \right)^{\bbt} \\[4pt]
\geq & \, \displaystyle \frac{|B_s(x_0)|}{|B_h(x_0)|}\left( \frac{1}{4} \right)^{\bbt}.
\end{aligned}
\end{equation}

If $3h> |x_0|$ and $|x_1|< h$ for some $ x_1\in B_h(x_0)$, then using \eqref{54}, we obtain
\begin{equation}\label{57}
\begin{aligned}
I\geq  & \, \displaystyle \frac{|B_s(x_0)|}{\int _{B_{3h} (0)} |x|^{-\bbt} \, \dxe} \cdot \frac{1}{(|x_0| + h)^{\bbt}}\\[5pt]
 \geq & \, \displaystyle \frac{(n-\bbt)|B_s(x_0)|}{|\partial B_{1} (0)| (3h)^n}\left(  \frac{3h}{|x_0| + h} \right)^{\bbt} \\[7pt]
 \geq & \, \displaystyle \frac{(n- \bbt)|B_s(x_0)|}{|\partial B_{1} (0)| (3h)^n}\left(  \frac{3}{4} \right)^{\bbt}.
 \end{aligned}
\end{equation}

From \eqref{54}-\eqref{57}, we obtain
$$
\frac{\mvbt (B_s(x_0))}{\mvbt (B_h(x_0))}\geq C(\bbt ,n) \left(\frac{s}{h}\right)^n. 
$$

\end{proof}

As a consequence of Proposition \ref{58} and Lemma \ref{180},  we obtain the following result.
\begin{corollary} \label{59}  Let $u$  be Lipschitz continuous on a ball $B_r(x) \subset \mathbb R^n$, and assume that $1<p_0<q$, $1-n(1/p_0 - 1/q)\geq 0$, and $n>\bbt \geq 0$. Then
$$
\left(\fint_{B_r(x)} \left|u-(u)_{B_r(x),\mvbt}  \right|^q  \, \dx \right)^{\frac{1}{q}}  \leq   c r \left( \fint_{B_r(x)}|\nabla u|^{p_0} \,  \dx \right)^{\frac{1}{p_0}}
$$
with $c$ independent of $u$, $r$, and $x$.

\end{corollary}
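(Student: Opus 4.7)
The plan is to deduce Corollary \ref{59} directly from Proposition \ref{58} by choosing $\mva=\mvb=\mvbt$ and verifying the three hypotheses required by that proposition: $\mvbt\in D_\infty$, $\mvbt\in A_{p_0}$, and the balance condition \eqref{52} with $\alpha=1$ for $y\in B_{2r}(x)$. Once these are in place, the Poincaré-type estimate is immediate.

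\textbf{Step 1 (doubling and Muckenhoupt).} By Lemma \ref{180}\ref{181}, under the assumption $n>\bbt\geq 0$ the weight $\mvbt(x)=|x|^{-\bbt}$ belongs to the Muckenhoupt class $A_q$ for every $q\in(1,\infty)$; in particular, $\mvbt\in A_{p_0}$ and $\mvbt$ is doubling, so $\mvbt\in D_\infty$. This covers two of the three hypotheses of Proposition \ref{58}.

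\textbf{Step 2 (balance condition \eqref{52}).} With $\mva=\mvb=\mvbt$ and $\alpha=1$, \eqref{52} reduces to
\begin{equation*}
\frac{s}{h}\leq c\left(\frac{\mvbt(B_s(y))}{\mvbt(B_h(y))}\right)^{\frac{1}{p_0}-\frac{1}{q}},\qquad 0<s\leq h.
\end{equation*}
Lemma \ref{180}\ref{66} gives the lower bound $\mvbt(B_s(y))/\mvbt(B_h(y))\geq C(\bbt,n)(s/h)^n$. Raising this to the nonnegative power $\tfrac{1}{p_0}-\tfrac{1}{q}$ produces
\begin{equation*}
\left(\frac{\mvbt(B_s(y))}{\mvbt(B_h(y))}\right)^{\frac{1}{p_0}-\frac{1}{q}}\geq C(\bbt,n)^{\frac{1}{p_0}-\frac{1}{q}}\left(\frac{s}{h}\right)^{n\left(\frac{1}{p_0}-\frac{1}{q}\right)}.
\end{equation*}
Since $s/h\leq 1$ and the standing assumption $1-n(1/p_0-1/q)\geq 0$ guarantees $n(1/p_0-1/q)\leq 1$, we have $(s/h)^{n(1/p_0-1/q)}\geq s/h$, which yields \eqref{52} with a constant $c=c(\bbt,n,p_0,q)$ that is independent of $y$, $s$, and $h$. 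In particular it holds for every $y\in B_{2r}(x)$.

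\textbf{Step 3 (conclusion).} All hypotheses of Proposition \ref{58} are now verified, so applying it with $\mva=\mvb=\mvbt$ yields the desired inequality
\begin{equation*}
\left(\fint_{B_r(x)}\left|u-(u)_{B_r(x),\mvbt}\right|^q\dx\right)^{\frac{1}{q}}\leq c\, r\left(\fint_{B_r(x)}|\nabla u|^{p_0}\dx\right)^{\frac{1}{p_0}},
\end{equation*}
with $c$ independent of $u$, $r$, and $x$. There is no genuine obstacle here: the entire argument amounts to a routine verification that the power weight $|x|^{-\bbt}$ fits the abstract framework of \cite{chani1985weightedpeano}, and the only place where the extra dimensional assumption $1-n(1/p_0-1/q)\geq 0$ intervenes is in comparing the volume ratio lower bound from Lemma \ref{180}\ref{66} with the linear factor $s/h$ in \eqref{52}.
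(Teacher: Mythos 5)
Your proposal is correct and follows exactly the route the paper intends: Corollary \ref{59} is stated there as a direct consequence of Proposition \ref{58} and Lemma \ref{180}, with $\mva=\mvb=\mvbt$, the $A_{p_0}$ and doubling properties coming from Lemma \ref{180}\ref{181}, and the balance condition \eqref{52} with $\alpha=1$ coming from the volume-ratio bound \eqref{68} together with the hypothesis $1-n(1/p_0-1/q)\geq 0$. Your verification of \eqref{52} is the only nontrivial step and it is carried out correctly.
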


\begin{proposition} (\cite[Theorem 1.5]{chani1985weightedpeano}). Let $u$ be Lipschitz continuous on a ball $B_r(x) \subset \mathbb R^n$, and assume that $1<p_0<q$, $\mva \in D_{\infty}$, the pair $\mva$, $\mvb$ satisfies \eqref{52} with $\alpha=1$ for $y \in B_{2 r}(x)$, and $\mvb \in A_{p_0}$. Suppose that $\spt u \subset B_h (x)$. Then
$$
\left(\fint_{B_r(x)} |u|^q  \, \dmva \right)^{\frac{1}{q}}  \leq   c r \left( \fint_{B_r(x)}|\nabla u|^{p_0} \, \dmvb \right)^{\frac{1}{p_0}},
$$
with $c$ independent of $u$, $r$, and $x$.
\end{proposition}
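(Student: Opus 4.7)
The plan is to reduce the statement to a two-weight boundedness theorem for the Riesz potential $I_1 f(z) := \int_{\mathbb{R}^n} |z-y|^{-(n-1)}\, f(y)\, \dye$. Since $u$ is Lipschitz with $\spt u \subset B_h(x)$, the fundamental theorem of calculus, applied along rays emanating from an arbitrary point $z$ and averaged over unit directions, yields the classical pointwise bound
$$
|u(z)| \leq c_n \int_{\mathbb{R}^n} \frac{|\nabla u(y)|}{|z-y|^{n-1}}\, \dye = c_n\, I_1(|\nabla u|)(z).
$$
Raising this to the $q$-th power, integrating against $\mva$ over $B_r(x)$, and normalizing, it will suffice to prove the two-weight inequality
$$
\left(\fint_{B_r(x)} (I_1 f)^q\, \dmva \right)^{\frac{1}{q}} \leq c\, r\, \left(\fint_{B_r(x)} f^{p_0}\, \dmvb \right)^{\frac{1}{p_0}}
$$
for $f := |\nabla u| \geq 0$ with $\spt f \subset B_h(x)$, with a constant $c$ uniform in $r$ and $x$.

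For the two-weight estimate I would split $I_1 f = I_1 f_1 + I_1 f_2$ with $f_1 := f\, \chi_{B_{2r}(x)}$ and $f_2 := f - f_1$. A dyadic annular decomposition of $\{2^{-k-1} r \leq |z-y| < 2^{-k} r\}$ yields the familiar pointwise bound $I_1 f_1(z) \leq C\, r\, M f(z)$ on $B_r(x)$, where $M$ is the Hardy--Littlewood maximal operator. Since $\mvb \in A_{p_0}$, the operator $M$ is bounded on $L^{p_0}(\mvb)$; combining this with the balance condition \eqref{52} (with $\alpha = 1$, localized to $y \in B_{2r}(x)$) converts the resulting $L^{p_0}(\mvb)$-average on $B_{2r}(x)$ into an $L^q(\mva)$-average on $B_r(x)$ with the correct prefactor $r$. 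The non-local piece $I_1 f_2$ (present only if $h > 2r$) is controlled by decomposing $\mathbb{R}^n \setminus B_{2r}(x)$ into dyadic annuli $B_{2^{k+1} r}(x) \setminus B_{2^{k} r}(x)$ and iterating \eqref{52} together with the doubling property of $\mvb$, producing a geometrically convergent series.

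The main obstacle is the uniformity of the constant $c$: it must depend only on $n$, $p_0$, $q$, the $A_{p_0}$-constant of $\mvb$, the $D_\infty$-constant of $\mva$, and the constant in \eqref{52}, but not on $r$, $x$, or $u$. Achieving this uniformity requires that \eqref{52} be invoked only at scales and base points compatible with its local validity on $B_{2r}(x)$, which is precisely what forces the dyadic argument above to be organized carefully. Once this organization is in place, the local maximal-function estimate and the telescoping tail estimate combine cleanly to give the desired normalized Sobolev inequality. Notice also that this is the Sobolev-type counterpart of Proposition \ref{58}, differing only in that the mean $(u)_{B_r(x), \mva}$ is replaced by $0$; the support hypothesis on $u$ is precisely what permits this replacement via the Riesz potential representation.
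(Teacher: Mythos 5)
The paper does not prove this proposition at all: it is imported verbatim from Chanillo--Wheeden \cite[Theorem 1.5]{chani1985weightedpeano}, so what you are really attempting is a self-contained proof of that two-weight Sobolev theorem. Your reduction via the Riesz potential representation $|u(z)|\le c_n I_1(|\nabla u|)(z)$ and the splitting $f=f_1+f_2$ is the standard and reasonable opening, and the pointwise bound $I_1 f_1\le C\,r\,Mf$ on $B_r(x)$ is fine.

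The genuine gap is the step where you ``convert the resulting $L^{p_0}(\mvb)$-average on $B_{2r}(x)$ into an $L^q(\mva)$-average on $B_r(x)$'' by combining the $A_{p_0}$-boundedness of $M$ with the balance condition \eqref{52}. That conversion is exactly the content of the two-weight, exponent-raising inequality you are trying to prove, and it does not follow from the ingredients you list. The boundedness of $M$ on $L^{p_0}(\mvb)$ controls $Mf$ only in $L^{p_0}(\mvb)$, whereas the left-hand side requires $L^q(\mva)$ with $q>p_0$ and a different weight; and \eqref{52} is a statement about measures of balls only, so it cannot be applied to the general function $Mf$. Indeed, the unrestricted conversion it would require, namely $\bigl(\fint_{B_r}g^{q}\,\dmva\bigr)^{1/q}\le C\bigl(\fint_{B_{2r}}g^{p_0}\,\dmvb\bigr)^{1/p_0}$ for all $g\ge 0$, is false already for $\mva=\mvb=1$ and $q>p_0$ (test on indicators of small balls; the factor $(s/h)^{\alpha}$ in \eqref{52} is precisely what is missing). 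Nor does a term-by-term use of \eqref{52} on the dyadic annuli rescue the argument as written: applying \eqref{52} at scale $2^{j}$ against scale $2r$ produces factors $\bigl(\mva(B_{2r})/\mva(B_{2^{j}})\bigr)^{1/q}$ that grow as the scale decreases, so the resulting series is not geometric, and a Hedberg-type optimization in the splitting scale would then require a weighted maximal estimate for $\mva$ that is not among the hypotheses. Closing this gap is essentially reproving Chanillo--Wheeden (or invoking Sawyer--Wheeden-type two-weight results for fractional integrals/maximal operators), which is why the paper simply cites the theorem rather than proving it.
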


\begin{corollary} \label{70}
Let $u$ be Lipschitz continuous on a ball $B_r(x) \subset \mathbb R^n$ with $\spt u \subset B_r(x)$. Let
\begin{equation}\label{69}
\kappa = \left\{
 \begin{aligned}
 & \, \displaystyle \frac{n}{n-p_0} & \, \displaystyle  & \, \displaystyle  \text { if } 1<p_0<n,  \\
 &2 & \, \displaystyle  & \, \displaystyle  \text { if } p_0 \geq n.
\end{aligned} \right.
\end{equation}

There exists a  constant $c>0$, independent of $u$, $r$, and $x$,  such that
\begin{equation}\label{2}
\left(\fint_{B_r(x)} |u|^{\kappa p_0}  \, \dx \right)^{\frac{1}{\kappa p_0}}  \leq   c r \left( \fint_{B_r(x)}|\nabla u|^{p_0} \, \dx \right)^{\frac{1}{p_0}}.
\end{equation}

\end{corollary}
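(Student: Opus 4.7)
The plan is to obtain Corollary \ref{70} as a direct specialization of the preceding weighted Sobolev inequality (Chanillo--Wheeden, [Theorem 1.5]) with the two weights collapsed to a common one, $\mva = \mvb = \mvbt$, and target exponent $q := \kappa p_0$. With this setup, the task reduces to verifying the hypotheses of that Proposition in the single-weight case: (a) $\mvbt \in A_{p_0}$; (b) the balance condition \eqref{52} holds with $\alpha = 1$; and (c) $1 < p_0 < q$.

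Hypothesis (a) is immediate from Lemma \ref{180} \ref{181}, which asserts that $\mvbt \in A_{p_0}$ for every $p_0 > 1$ whenever $0 \leq \bbt < n$. Hypothesis (c) is also trivial: the definition \eqref{69} of $\kappa$ yields $\kappa > 1$ in both regimes, so $q = \kappa p_0 > p_0 > 1$. The substantive step is (b). With $\mva = \mvb = \mvbt$ and $\alpha = 1$, condition \eqref{52} collapses (since $q > p_0$) to
$$
\frac{s}{h} \leq c \left(\frac{\mvbt(B_s(y))}{\mvbt(B_h(y))}\right)^{\frac{1}{p_0} - \frac{1}{q}}, \quad 0 < s \leq h.
$$
I would then invoke the lower bound $\mvbt(B_s(y))/\mvbt(B_h(y)) \geq C (s/h)^n$ from Lemma \ref{180} \ref{66}, which reduces the above to the purely arithmetic inequality $n(1/p_0 - 1/q) \leq 1$, using that $s/h \leq 1$.

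Finally, the definition \eqref{69} of $\kappa$ is precisely calibrated so that this inequality holds. When $1 < p_0 < n$, the choice $q = n p_0/(n - p_0)$ gives $1/p_0 - 1/q = 1/n$, hence $n(1/p_0 - 1/q) = 1$ with equality. When $p_0 \geq n$, the choice $q = 2 p_0$ gives $n(1/p_0 - 1/q) = n/(2 p_0) \leq 1/2$. Either way the balance condition is verified, and the cited Proposition yields \eqref{2}. The argument is essentially a bookkeeping exercise; there is no serious obstacle, and in particular no new estimate beyond the two items of Lemma \ref{180} is required. The only mild subtlety is the case split around $p_0 = n$, dictated by the failure of the Sobolev exponent $np_0/(n-p_0)$ to be defined there, which is why the definition of $\kappa$ is piecewise.
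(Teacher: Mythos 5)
Your proposal is correct and is precisely the derivation the paper intends (it leaves Corollary \ref{70} unproved, as an immediate consequence of the Chanillo--Wheeden proposition and Lemma \ref{180}, exactly as done for Corollary \ref{59}): taking $\mva=\mvb=\mvbt$, $q=\kappa p_0$, checking $\mvbt\in A_{p_0}\cap D_\infty$ via Lemma \ref{180}\ref{181}, and reducing the balance condition \eqref{52} with $\alpha=1$ to $n(1/p_0-1/q)\leq 1$ via Lemma \ref{180}\ref{66} and the definition \eqref{69} of $\kappa$. No gaps; this matches the paper's approach.
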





\subsection{Definition and Auxiliary Results for Weak Solutions}
Next, we define the notion of a weak solution to \eqref{4}.
\begin{definition} \label{166} Let $\Omega$ be a  bounded domain, and assume conditions \ref{162} and \ref{163} are satisfied, and that $n-(1-s^-)>\beta \geq 0$. A function   $u \in W_{\loc}^{1, \pl }(\Omega ; \mvbd) \cap W^{s(x,y),p(x,y)} (\Omega ; \mvbb)$ is a \textnormal{weak subsolution} of \eqref{4} if, for every  nonnegative test function $\phi \in W_{\loc} ^{1, \pl}(\Omega ; \mvbd ) \cap W^{s(x,y),p(x,y)} (\Omega ; \mvbb)$ with $\spt \phi \subset \Omega $, we have
\begin{equation} \label{3}
\int_{\Omega}|\nabla u|^{\pl-2} \nabla u       {\cdot} \nabla \phi \, \dmvbd+\int_{\Omega} \int_{\Omega} \mathcal{A}(u, x, y)(\phi(x)-\phi(y)) \, \dxu  (x,y)\leq 0,
\end{equation}
where 
$$
\mathcal{A}(u,x, y)=|u(x)-u(y)|^{p(x,y)-2}(u(x)-u(y)) \quad \text { and } \quad  \dxu (x,y) =\frac{1}{|x|^\beta |y|^\beta} K(x, y) \, \dxe \dye .
$$
 
Analogously,  $u\in W_{\loc}^{1, \pl }(\Omega ; \mvbd) \cap W^{s(x,y),p(x,y)} (\Omega ; \mvbb)$ is a \textnormal{weak supersolution} of \eqref{4} if the integral in \eqref{3} is nonnegative for every nonnegative test function $\phi \in W_{\loc} ^{1, \pl}(\Omega ; \mvbd ) \cap W^{s(x,y),p(x,y)} (\Omega ; \mvbb)$ with $\spt \phi \subset \Omega$.

 A function $u\in W_{\loc}^{1, \pl }(\Omega ; \mvbd) \cap W^{s(x,y),p(x,y)} (\Omega ; \mvbb)$ is a \textnormal{weak solution} of \eqref{4} if equality holds in \eqref{3} for every $\phi \in W_{\loc} ^{1, \pl}(\Omega ; \mvbd ) \cap W^{s(x,y),p(x,y)} (\Omega ; \mvbb)$ with $\spt \phi \subset \Omega$, and without any sign restriction on $\phi$.
\end{definition}


It follows directly from Definition \ref{166} that $u$ is a weak subsolution of \eqref{4} if and only if $-u$ is a weak supersolution of \eqref{4}. Moreover, for any $c \in \mathbb{R}$, $u+c$ is a weak solution of \eqref{4} if and only if $u$ is a weak solution of \eqref{4}.

We now discuss further structural properties of weak solutions. We denote the positive and negative parts of $a \in \mathbb{R}$ by $a_{+}=\max \{a, 0\}$ and $a_{-}=\max \{-a, 0\}$, respectively. 

\begin{lemma} \label{164} A function $u\in W_{\loc}^{1, \pl }(\Omega ; \mvbd) \cap W^{s(x,y),p(x,y)} (\Omega ; \mvbb)$ is a weak solution of \eqref{4} if and only if $u$ is both a weak subsolution and a weak supersolution of \eqref{4}.
\end{lemma}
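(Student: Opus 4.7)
The plan is to establish the equivalence by the usual splitting $\phi = \phi_+ - \phi_-$ and then separately testing the subsolution and supersolution inequalities against each nonnegative piece.

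The ``only if'' direction is immediate from Definition \ref{166}: if $u$ is a weak solution, then \eqref{3} holds with equality for every admissible $\phi$ with $\spt \phi \subset \Omega$, so in particular it holds (with equality, hence both $\leq$ and $\geq$) for every nonnegative such $\phi$. Thus $u$ is simultaneously a weak subsolution and a weak supersolution.

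For the ``if'' direction, I fix an arbitrary test function $\phi \in W^{1,\pl}_{\loc}(\Omega;\mvbd) \cap W^{s(x,y),p(x,y)}(\Omega;\mvbb)$ with $\spt \phi \subset \Omega$, and decompose
\[
\phi = \phi_+ - \phi_-, \qquad \phi_\pm = \max\{\pm \phi, 0\}.
\]
The first step is to verify that both $\phi_+$ and $\phi_-$ are themselves admissible nonnegative test functions. For the local part, since $|\nabla \phi_\pm| \le |\nabla \phi|$ pointwise a.e.\ (indeed $\nabla \phi_+ = \chi_{\{\phi>0\}}\nabla \phi$ and similarly for $\phi_-$), we have $\phi_\pm \in W^{1,\pl}_{\loc}(\Omega;\mvbd)$. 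For the nonlocal part, writing $\phi_+ = \max\{\phi,0\}$ and $\phi_- = \max\{-\phi,0\}$ and applying Lemma \ref{177}\ref{179}, we obtain $\phi_\pm \in W^{s(x,y),p(x,y)}(\Omega;\mvbb)$. Finally, $\spt \phi_\pm \subset \spt \phi \subset \Omega$, so both are valid test functions in Definition \ref{166}.

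The second step combines the two one-sided inequalities. Testing the subsolution inequality with $\phi_+ \geq 0$ gives
\[
\int_{\Omega}|\nabla u|^{\pl-2} \nabla u \cdot \nabla \phi_+ \, \dmvbd
+ \int_{\Omega}\int_{\Omega} \mathcal{A}(u,x,y)(\phi_+(x)-\phi_+(y)) \, \dxu(x,y) \leq 0,
\]
while testing the supersolution inequality with the same $\phi_+$ yields the reverse inequality $\geq 0$. Hence equality holds for $\phi_+$. Repeating the argument with $\phi_-$ in place of $\phi_+$ produces equality for $\phi_-$. By linearity of both terms in $\eqref{3}$ with respect to the test function, equality also holds for $\phi = \phi_+ - \phi_-$, proving that $u$ is a weak solution.

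The only mildly delicate point is the admissibility check for $\phi_\pm$ in the weighted fractional space; this is precisely the content of Lemma \ref{177}\ref{179}, so no further work is needed. Everything else is a direct appeal to the definitions and linearity, so the proof is short.
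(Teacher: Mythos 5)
Your proof is correct and takes essentially the same route as the paper: the paper simply defers to \cite[Lemma 2.7]{zbMATH07576867}, whose argument is exactly your decomposition $\phi = \phi_+ - \phi_-$ tested against the sub- and supersolution inequalities, and your admissibility check for $\phi_\pm$ via Lemma \ref{177}\ref{179} supplies the weighted variable-exponent detail needed to carry that argument over. Nothing further is required.
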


\begin{proof} 
The proof follows the same argument as in  \cite[Lemma 2.7]{zbMATH07576867}.\end{proof}

Next, we show that the property of being a weak subsolution is preserved under taking the positive part. As a direct consequence $u_{-}$ is a weak subsolution of \eqref{4}, whenever $u$ is a weak supersolution of \eqref{4}.
\begin{lemma} \label{165} Assume that $u\in  W_{\loc}^{1, \pl }(\Omega ; \mvbd) \cap W^{s(x,y),p(x,y)} (\Omega ; \mvbb) $ is a weak subsolution of \eqref{4}. Then $u_{+} \in  W_{\loc}^{1, \pl }(\Omega ; \mvbd) \cap W^{s(x,y),p(x,y)} (\Omega ; \mvbb)$ is a weak subsolution of \eqref{4}.
\end{lemma}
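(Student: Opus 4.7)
The plan is to follow the classical approximation scheme used for $p$-Laplacian subsolutions, adapted to our mixed local–nonlocal weighted variable-exponent setting, essentially mimicking the argument in \cite{zbMATH07576867}. First I would confirm that $u_+$ lies in the correct function spaces: the local regularity $u_+ \in W_{\loc}^{1,\pl}(\Omega;\mvbd)$ follows from the standard Sobolev chain rule $\nabla u_+ = \chi_{\{u > 0\}}\nabla u$, while $u_+ \in W^{s(x,y),p(x,y)}(\Omega;\mvbb)$ is a direct consequence of Lemma \ref{177} \ref{179} applied with $v \equiv 0$.

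The main step is the construction of an appropriate test function. Given a nonnegative admissible $\phi$ with $\spt \phi \subset \Omega$, I would introduce a smooth nondecreasing cutoff $\theta_\varepsilon : \mathbb{R} \to [0,1]$ satisfying $\theta_\varepsilon \equiv 0$ on $(-\infty,0]$ and $\theta_\varepsilon \equiv 1$ on $[\varepsilon,\infty)$, and test \eqref{3} with $\phi_\varepsilon := \phi\, \theta_\varepsilon(u)$. Its admissibility in $W^{1,\pl}_{\loc}(\Omega;\mvbd) \cap W^{s(x,y),p(x,y)}(\Omega;\mvbb)$ with $\spt\phi_\varepsilon \subset \spt \phi \subset \Omega$ follows from Lemma \ref{177} \ref{178} combined with the standard Lipschitz composition properties in the fractional space, using that $\theta_\varepsilon$ is bounded Lipschitz. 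The chain rule yields
\begin{equation*}
|\nabla u|^{\pl - 2}\nabla u \cdot \nabla \phi_\varepsilon = \theta_\varepsilon(u)|\nabla u|^{\pl - 2}\nabla u \cdot \nabla \phi + \phi\, \theta_\varepsilon'(u)|\nabla u|^{\pl},
\end{equation*}
and the second summand is nonnegative, so it may be discarded from the inequality. Letting $\varepsilon \to 0^+$, dominated convergence (with integrable dominant $|\nabla u|^{\pl-1}|\nabla \phi|$) turns the surviving integral into $\int_\Omega |\nabla u_+|^{\pl - 2}\nabla u_+ \cdot \nabla \phi\,\dmvbd$.

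For the nonlocal part, the key analytic input is the pointwise algebraic inequality
\begin{equation*}
|a-b|^{p-2}(a-b)\bigl(\chi_{\{a > 0\}}\alpha - \chi_{\{b > 0\}}\beta\bigr) \geq |a_+ - b_+|^{p-2}(a_+ - b_+)(\alpha - \beta),
\end{equation*}
valid for all $a,b \in \mathbb{R}$, $\alpha,\beta \geq 0$ and $p > 1$, verified by case analysis on the signs of $a,b$ (the nontrivial case of opposite signs is settled using $|a-b|^{p-1} > a_+^{p-1}$ together with the nonnegativity of $\alpha, \beta$). Passing $\varepsilon \to 0^+$ in $\mathcal{A}(u,x,y)(\phi_\varepsilon(x) - \phi_\varepsilon(y))$ replaces $\theta_\varepsilon(u)$ by $\chi_{\{u > 0\}}$, and then this algebraic bound reduces the nonlocal integrand to one dominating $\mathcal{A}(u_+, x, y)(\phi(x) - \phi(y))$. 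Combining the local and nonlocal estimates yields \eqref{3} with $u$ replaced by $u_+$.

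The main obstacle will be producing an $\varepsilon$-uniform integrable majorant needed to justify the limit in the nonlocal integral. The strategy is to split $\Omega \times \Omega$ into a neighborhood of $\spt \phi \times \spt \phi$, where the dominant is controlled through $\tilde \varrho_{s,p,\Omega,\mvbb}(u)$ together with the bound $0 \leq \theta_\varepsilon \leq 1$, and its complement, where the Lipschitz decay of $\phi$ away from its support and the tail condition $\tl(u;x_0,r,\rho)<\infty$ supply the required integrability; the same decomposition, carried out in the spirit of Lemma \ref{176}, also yields the admissibility of $\phi_\varepsilon$ in the weighted variable-exponent fractional Sobolev space, which is the principal technical cost of adapting the Garain–Kinnunen argument to the present framework.
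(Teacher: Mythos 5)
Your overall strategy is the same one the paper uses (via the Garain--Kinnunen argument it cites): approximate $\chi_{\{u>0\}}$ by a bounded Lipschitz truncation of $u$ (your $\theta_\varepsilon(u)$ plays the role of $u_k=\min\{k u_+,1\}$), multiply by the test function, pass to the limit, and use a sign/case analysis to recover the inequality for $u_+$; your pointwise algebraic inequality is correct. The gap is in the admissibility of your test function $\phi_\varepsilon=\phi\,\theta_\varepsilon(u)$: the paper's Definition \ref{166} does not require $\phi$ to be bounded, and neither of the tools you invoke covers the product of an \emph{unbounded} $\phi$ with $\theta_\varepsilon(u)$. Lemma \ref{177} \ref{178} needs both factors in $L^\infty_{\loc}$, and a Lemma \ref{176}--type argument does not apply either, because $\theta_\varepsilon(u)$ is Lipschitz as a function of $u$, not of $x$: in the Gagliardo seminorm the cross term is bounded only by $\varepsilon^{-1}|\phi(y)|\,|u(x)-u(y)|$, and
$\int_\Omega\int_\Omega |\phi(y)|^{p(x,y)}|u(x)-u(y)|^{p(x,y)}|x-y|^{-n-s(x,y)p(x,y)}\mvbb(x)\mvbb(y)\,\dxe\dye$
is not finite in general without $\phi\in L^\infty$. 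This is exactly why the paper's proof first replaces $\phi$ by $T_\ell(\phi)$ (so that both factors are bounded and Lemma \ref{177} applies), proves the inequality for $u_k T_\ell(\phi)$, and only then sends $\ell\to\infty$; your proposal omits this truncation and the corresponding extra limit.

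A second, related weak point is the limit justification in the nonlocal term. The $\varepsilon$-uniform majorant you describe, built from $0\le\theta_\varepsilon\le 1$ alone, would be of the form $|u(x)-u(y)|^{p(x,y)-1}\bigl(|\phi(x)|+|\phi(y)|\bigr)|x-y|^{-n-s(x,y)p(x,y)}$ near $\spt\phi\times\spt\phi$, and this is not integrable across the diagonal (the factor $|\phi(y)|$ paired with the singular kernel diverges), while the alternative bound $\varepsilon^{-1}|\phi(y)|\,|u(x)-u(y)|$ is not uniform in $\varepsilon$. The standard repair, and essentially what the cited proof does, is to split
$\phi_\varepsilon(x)-\phi_\varepsilon(y)=\theta_\varepsilon(u(x))\bigl(\phi(x)-\phi(y)\bigr)+\phi(y)\bigl(\theta_\varepsilon(u(x))-\theta_\varepsilon(u(y))\bigr)$,
use the dominant $|u(x)-u(y)|^{p(x,y)-1}|\phi(x)-\phi(y)|$ (integrable by H\"older since $u,\phi$ lie in the fractional space, plus the tail for the far region) for the first piece, and exploit that the second piece satisfies $\mathcal{A}(u,x,y)\bigl(\theta_\varepsilon(u(x))-\theta_\varepsilon(u(y))\bigr)\ge 0$ (monotonicity of $\theta_\varepsilon$), so that Fatou in the correct direction, not dominated convergence, handles it. With the truncation $T_\ell(\phi)$ and this splitting your argument closes and coincides with the paper's; as written, both the admissibility of $\phi_\varepsilon$ and the passage to the limit are unjustified.
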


\begin{proof} The argument follows the same reasoning as in \cite[Lemma 2.8]{zbMATH07576867}, taking into account Lemma \ref{177}. Since {this lemma allows us to claim that} 
$$
u_{k}=\min \left\{k u_{+}, 1 \right\}\in W_{\loc}^{1, \pl }(\Omega ; \mvbd) \cap W^{s(x,y),p(x,y)} (\Omega ; \mvbb) \cap L^\infty (\Omega), \quad  \text { for } \quad  k=1,2, \ldots ,
$$
 and  for any $\ell >0$,
$$
T_{\ell} (\phi ) \in W_{\loc}^{1, \pl }(\Omega ; \mvbd) \cap W^{s(x,y),p(x,y)} (\Omega ; \mvbb) \cap L^\infty (\Omega),
$$
where $\phi \in W_{\loc}^{1, \pl }(\Omega ; \mvbd) \cap W^{s(x,y),p(x,y)} (\Omega ; \mvbb)$, with $\spt \phi \subset \Omega$. The truncation function  $T_\ell : \mathbb{R} \rightarrow \mathbb{R}$, $\ell>0$, is defined by
\begin{align*}
T_\ell (t)=\left\{
\begin{aligned}
&t & & \text { if } |t| \leq \ell,\\
&\operatorname{sign}(t) \ell & & \text { if } |t| > \ell.
\end{aligned}\right.
\end{align*}
Hence, by  Lemma \ref{177}, $u_k T_{\ell} (\phi ) \in W_{\loc}^{1, \pl }(\Omega ; \mvbd) \cap W^{s(x,y),p(x,y)} (\Omega ; \mvbb) \cap L^\infty (\Omega)$.

To complete the proof of Lemma \ref{165}, we refer to the proof of \cite[Lemma 2.8]{zbMATH07576867}.\end{proof}

\section{Energy estimates} \label{185}
The following energy estimate will be crucial for us.


\begin{lemma} \label{7} (\cite[Lemma 3.4]{chaker2023local}). Let $a, b \geq 0$, $\tau_1, \tau_2 \in[0,1]$, and $1<p_1 \leq p_2 \leq p_3 $. Then,
\begin{align*}
|a-b|^{p_2 -2}(a-b)&\left(a \tau_1^{p_3 }-b \tau_2^{p_3 }\right) \\[4pt]
\geq & \, \displaystyle \frac{1}{2}|a-b|^{p_2}\left(\max \left\{\tau_1, \tau_2\right\} \right)^{p_3 }-C\left(\max \{a, b\}\right)^{p_2}\left|\tau_1-\tau_2\right|^{p_2},
\end{align*}
where $C=\frac{p_3}{p_1} (2p_3)^{p_3 -1}$.
\end{lemma}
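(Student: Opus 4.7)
The plan is to reduce to an algebraic manipulation using the symmetry of the inequality, a convexity-type identity, and Young's inequality, together with a one-variable mean value estimate.

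First I would observe that the expression on the left-hand side is symmetric under the simultaneous swap $(a,\tau_1) \leftrightarrow (b,\tau_2)$ (both sign-flipping factors cancel), as is the right-hand side. Hence, without loss of generality, I may assume $\tau_2 \leq \tau_1$, so $\max\{\tau_1,\tau_2\}=\tau_1$. The key algebraic step is the identity
\[
(a-b)\bigl(a\tau_1^{p_3}-b\tau_2^{p_3}\bigr)
= (a-b)^2\tau_1^{p_3} + b(a-b)\bigl(\tau_1^{p_3}-\tau_2^{p_3}\bigr),
\]
which, after multiplication by $|a-b|^{p_2-2}$, produces the principal term $|a-b|^{p_2}\tau_1^{p_3}$ plus a cross term that I need to control.

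If $a\ge b$, then $(a-b)(\tau_1^{p_3}-\tau_2^{p_3})\ge 0$ (using $\tau_1\ge\tau_2$ and $b\ge 0$), so the cross term has the favorable sign and the inequality follows with constant $1$ and no error term. The genuine work is the case $a<b$, in which $b=\max\{a,b\}$ and I must show
\[
b|a-b|^{p_2-1}\bigl(\tau_1^{p_3}-\tau_2^{p_3}\bigr)
\ \le\ \tfrac12 |a-b|^{p_2}\tau_1^{p_3} + C\, b^{p_2}|\tau_1-\tau_2|^{p_2}.
\]
Here I would use the elementary estimate $\tau_1^{p_3}-\tau_2^{p_3}\le p_3\,\tau_1^{p_3-1}(\tau_1-\tau_2)$ (valid since $\tau_1,\tau_2\in[0,1]$ and $p_3\ge 1$), reducing the problem to bounding
$p_3\, b\,|a-b|^{p_2-1}\tau_1^{p_3-1}|\tau_1-\tau_2|$. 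Writing this as
\[
p_3\cdot\bigl(|a-b|\,\tau_1^{p_3/p_2}\bigr)^{p_2-1}\cdot\bigl(b|\tau_1-\tau_2|\bigr)\cdot \tau_1^{\,p_3/p_2-1},
\]
I would apply $\tau_1\le 1$ together with the hypothesis $p_3\ge p_2$ to drop the last factor, and then invoke Young's inequality with exponents $p_2/(p_2-1)$ and $p_2$, tuning the $\epsilon$-parameter so that the $|a-b|^{p_2}\tau_1^{p_3}$ contribution is absorbed with coefficient $\tfrac12$.

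The main obstacle, and the only delicate point, is tracking the constant in Young's inequality to match $C=\frac{p_3}{p_1}(2p_3)^{p_3-1}$. This explicit value suggests that the original argument applies Young with a specific scaling calibrated to the range $p_2\in[p_1,p_3]$; the factor $p_3^{p_3-1}$ likely arises from iterating a discrete version of the mean-value step rather than from my one-shot MVT, so I would revisit step by step with the scaling $\epsilon=\frac{1}{2p_3}$ and use $p_1\le p_2\le p_3$ to bound the resulting $p_2$-dependent prefactor uniformly by $\frac{p_3}{p_1}(2p_3)^{p_3-1}$. Once this bookkeeping is done, combining the two cases yields the desired inequality.
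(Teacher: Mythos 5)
The paper itself gives no proof of this lemma: it is quoted directly from \cite[Lemma 3.4]{chaker2023local}, so there is no in-paper argument to compare against. Your proposal is a correct, self-contained proof. The symmetry reduction to $\tau_2\le\tau_1$ is legitimate (both sides are invariant under swapping $(a,\tau_1)\leftrightarrow(b,\tau_2)$), the identity $(a-b)\bigl(a\tau_1^{p_3}-b\tau_2^{p_3}\bigr)=(a-b)^2\tau_1^{p_3}+b(a-b)\bigl(\tau_1^{p_3}-\tau_2^{p_3}\bigr)$ is correct, and the case $a\ge b$ indeed requires no error term. In the remaining case $a<b$, your chain of estimates closes: $\tau_1^{p_3}-\tau_2^{p_3}\le p_3\tau_1^{p_3-1}(\tau_1-\tau_2)$, the factorization with $\tau_1^{p_3/p_2}$ and the discarded factor $\tau_1^{p_3/p_2-1}\le 1$ (using $p_3\ge p_2$), and then Young's inequality with exponents $\tfrac{p_2}{p_2-1}$, $p_2$ and $\epsilon=\tfrac{1}{2p_3}$ give
\begin{equation*}
p_3\, b\,|a-b|^{p_2-1}\tau_1^{p_3-1}|\tau_1-\tau_2| \;\le\; \frac{p_2-1}{2p_2}\,|a-b|^{p_2}\tau_1^{p_3} \;+\; \frac{p_3}{p_2}\,(2p_3)^{p_2-1}\, b^{p_2}|\tau_1-\tau_2|^{p_2},
\end{equation*}
and since $\frac{p_2-1}{2p_2}\le\frac12$, $\frac{p_3}{p_2}\le\frac{p_3}{p_1}$, and $(2p_3)^{p_2-1}\le(2p_3)^{p_3-1}$, the error constant you obtain is no larger than $C=\frac{p_3}{p_1}(2p_3)^{p_3-1}$; proving the inequality with a smaller constant only strengthens it. The one piece of hedging in your write-up --- the speculation that $p_3^{p_3-1}$ must come from an iterated mean-value step --- is unnecessary: your one-shot argument already produces a constant dominated by the stated $C$, so no revisiting of the bookkeeping is needed.
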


\begin{lemma} \label{73} Let $u$ be a weak subsolution of \eqref{4}, and define $w=(u-k)_{+}$ with $k \in \mathbb{R}$. Then
 \begin{equation} \label{10}
\begin{aligned}
   \int_{B_{r}(x_{0})} & \psi^{\pl}|\nabla w|^{\pl} \, \dmvbd + \int_{B_{r}(x_{0})} \int_{B_{r}(x_{0})}\left(\max \left\{ \psi(x) , \psi(y) \right\} \right) ^{\pl }|w(x) -w(y)|^{p(x,y)} \, \dxu (x,y)  \\[7pt]
 \leq & \, \displaystyle 2\left[2(\pl -1)\right] ^{\pl -1}\int_{B_{r}(x_{0})} w^{\pl}|\nabla \psi|^{\pl} \, \dmvbd\\[5pt]
& \, \displaystyle   + \frac{ \left(2\pl\right)^{\pl } }{p^-} \int_{B_{r}(x_{0})} \int_{B_{r}(x_{0})} \max \{w(x), w(y)\}^{p(x,y)}|\psi(x)-\psi(y)|^{p(x,y)} \, \dxu (x,y)\\[5pt]
& \, \displaystyle   + 4 \int_{\Omega \setminus B_{r}(x_{0}) } \int _{B_{r}(x_{0})} w(y)^{p(x, y)-1} w(x) \psi(x) ^{\pl } \, \dxu (x,y),
\end{aligned}
\end{equation}
whenever $B_{r}(x_{0}) \subset \Omega$ and $\psi \in C_{c}^{\infty}(B_{r}(x_{0}))$ is a nonnegative function.

 If $u$ is a weak supersolution of \eqref{4}, the estimate in \eqref{10} holds with $w=(u-k)_{-}$.
\end{lemma}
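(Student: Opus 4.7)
The plan is to test the weak subsolution inequality \eqref{3} against $\phi = w\psi^{\pl}$, where $w = (u-k)_+$. Admissibility of $\phi$ follows by combining the fact that truncation preserves Sobolev regularity (so $w$ lies in the same function space as $u$) with the corollary to Lemma \ref{176} and Lemma \ref{177}\ref{178}, yielding $\phi \in W^{1,\pl}_{\loc}(\Omega;\dmvbd) \cap W^{s(x,y),p(x,y)}(\Omega;\dmvbb)$ with $\spt \phi \subset B_r(x_0)$. Since both \eqref{3} and \eqref{10} are positively homogeneous of degree $\pl$ in $\psi$, I rescale so that $0 \leq \psi \leq 1$, which is needed to apply Lemma \ref{7}. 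The argument then produces lower bounds on each of the two integrals in \eqref{3}, which are combined via $(\text{local})+(\text{nonlocal}) \leq 0$ and rearranged.

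For the local piece, the chain rule gives $\nabla(w\psi^{\pl}) = \psi^{\pl}\nabla w + \pl\,w\psi^{\pl-1}\nabla \psi$, and since $\nabla u = \nabla w$ on $\{u>k\}$ while $\nabla w = 0$ elsewhere,
\[
\int_{B_r(x_0)} |\nabla u|^{\pl-2}\nabla u \cdot \nabla(w\psi^{\pl})\,\dmvbd \geq \int \psi^{\pl}|\nabla w|^{\pl}\,\dmvbd - \pl \int w\,\psi^{\pl-1}|\nabla w|^{\pl-1}|\nabla \psi|\,\dmvbd.
\]
Young's inequality with conjugate exponents $\pl/(\pl-1)$ and $\pl$, balanced so that the coefficient of the $|\nabla w|^{\pl}$-term equals $1/2$, accounts precisely for the constant $2[2(\pl-1)]^{\pl-1}$ in \eqref{10} after the final factor of $2$ is applied.

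For the nonlocal piece, split $\Omega \times \Omega$ into the blocks $B_r \times B_r$, $B_r \times (\Omega\setminus B_r)$, $(\Omega\setminus B_r) \times B_r$, and $(\Omega\setminus B_r)^2$; the last block contributes nothing because $\phi \equiv 0$ there, and the antisymmetry of $\mathcal{A}$ combined with the symmetry of $\dxu$ collapses the two off-diagonal blocks into $2\int_{B_r(x_0)}\int_{\Omega\setminus B_r(x_0)} \mathcal{A}(u,x,y)\,w(x)\psi(x)^{\pl}\,\dxu(x,y)$. On the diagonal block, a case analysis on the signs of $u(x)-k$ and $u(y)-k$---trivial when both are on the same side of $k$ by monotonicity of $t\mapsto|t|^{p-2}t$, and using the estimate $u(x)-u(y)\geq u(x)-k = w(x)$ whenever $u(x)>k\geq u(y)$ in the mixed case---establishes the pointwise inequality
\[
\mathcal{A}(u,x,y)\bigl(w(x)\psi(x)^{\pl} - w(y)\psi(y)^{\pl}\bigr) \geq |w(x)-w(y)|^{p(x,y)-2}(w(x)-w(y))\bigl(w(x)\psi(x)^{\pl} - w(y)\psi(y)^{\pl}\bigr).
\]
Lemma \ref{7} with $p_1 = p^-$, $p_2 = p(x,y)$, $p_3 = \pl$, and $(\tau_1,\tau_2)=(\psi(x),\psi(y)) \in [0,1]^2$ then produces the two diagonal terms of \eqref{10} with constant $\tfrac{\pl}{p^-}(2\pl)^{\pl-1}$, which after the final doubling becomes $\tfrac{(2\pl)^{\pl}}{p^-}$. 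On the off-diagonal block, when $w(x)>0$ the only case in which $\mathcal{A}(u,x,y)$ is negative forces $u(y)>k$ and $u(y)-u(x) \leq u(y)-k = w(y)$, yielding
\[
\mathcal{A}(u,x,y)\,w(x)\psi(x)^{\pl} \geq -w(y)^{p(x,y)-1}w(x)\psi(x)^{\pl};
\]
after doubling from the two symmetric off-diagonal blocks and applying the outer factor of $2$, this gives the coefficient $4$ in front of the tail term in \eqref{10}.

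The main obstacle I anticipate is the mixed-case analysis for the diagonal pointwise inequality: it must be carried out so that its output matches exactly the hypothesis of Lemma \ref{7}, with the correct use of $u(x)-u(y)\geq w(x)$ (resp.\ $\geq w(y)$) in the appropriate sub-case, so that no spurious multiplicative constant depending on $k$ or $\|u\|_\infty$ creeps in. Once this pointwise reduction is in place, the rest is Young's inequality, a direct invocation of Lemma \ref{7}, and bookkeeping of the constants.
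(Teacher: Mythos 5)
Your argument reproduces the paper's proof essentially step for step: the test function $\phi=w\psi^{\pl}$, the Young balancing with $\epsilon=\tfrac{1}{2(\pl-1)}$ giving $\tfrac12\int\psi^{\pl}|\nabla w|^{\pl}-[2(\pl-1)]^{\pl-1}\int w^{\pl}|\nabla\psi|^{\pl}$ for the local part, the pointwise case analysis reducing $\mathcal{A}(u,x,y)$ to the corresponding expression in $w$ on $B_r\times B_r$, the invocation of Lemma \ref{7} with $(p_1,p_2,p_3)=(p^-,p(x,y),\pl)$ and $(\tau_1,\tau_2)=(\psi(x),\psi(y))$, and the collapse of the two off-diagonal blocks by symmetry together with the bound $\mathcal{A}(u,x,y)\,w(x)\psi(x)^{\pl}\ge -w(y)^{p(x,y)-1}w(x)\psi(x)^{\pl}$; the constants $2[2(\pl-1)]^{\pl-1}$, $(2\pl)^{\pl}/p^-$, and $4$ come out exactly as in the paper.

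The one step that does not hold up is your normalization: the claim that \eqref{10} is positively homogeneous of degree $\pl$ in $\psi$, so that one may rescale to $0\le\psi\le1$. It is not: the second right-hand term carries $|\psi(x)-\psi(y)|^{p(x,y)}$, which scales like $\lambda^{p(x,y)}$ with the variable exponent rather than like $\lambda^{\pl}$. Concretely, if $M:=\sup\psi>1$ and you apply the $\psi\le1$ case to $\psi/M$ and multiply back by $M^{\pl}$, that term picks up an extra factor $M^{\pl-p(x,y)}$, so you only recover \eqref{10} with the constant in the second term degraded by $(\sup\psi)^{\pl-p^-}$, not with the stated constant. The clean fix is to add the hypothesis $0\le\psi\le1$, which is exactly what is needed to apply Lemma \ref{7} (whose statement requires $\tau_1,\tau_2\in[0,1]$); the paper's own proof tacitly assumes this, and every subsequent application of the lemma uses cutoffs satisfying $0\le\psi\le1$, so nothing downstream is affected. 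Apart from this normalization remark, your proposal is sound and follows the paper's route.
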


\begin{proof}
  Let $u$ be a weak subsolution of \eqref{4}. For $w=(u-k)_{+}$, choose $\phi=w \psi^{\pl}$ as a test function in \eqref{3}. We obtain
\begin{align*}
0 & \, \displaystyle  \geq \int_{B_{r}(x_{0})}  |\nabla u|^{\pl-2} \nabla u       {\cdot} \nabla(w \psi^{\pl}) \, \dmvbd+\int_{\Omega} \int_{\Omega}  \mathcal{A}(u,x, y)(w(x) \psi(x)^{\pl}-w(y) \psi(y)^{\pl}) \, \dxu  (x,y) \\[5pt]
& \, \displaystyle  =: I+J.
\end{align*}

\medskip

{\bf Step 1.}  We have
\begin{align*}
 I  = & \, \displaystyle \int_{B_{r}(x_{0})} |\nabla u|^{\pl-2} \nabla u      {\cdot} \left( \psi^{\pl } \nabla w  + \pl  \psi^{\pl  -1 }\nabla \psi \right) \, \dmvbd \\[5pt]
 \geq  & \, \displaystyle \int_{B_{r}(x_{0})}  \psi^{\pl }|\nabla w|^{\pl} \, \dmvbd+  \int_{B_{r}(x_{0})}  \pl  w\psi^{\pl  -1 }|\nabla u|^{\pl-2} \nabla u      {\cdot} \nabla \psi  \, \dmvbd .
\end{align*}

Using Young's inequality,
\begin{align*}
  -w\psi^{\pl  -1 }|\nabla u|^{\pl-2} \nabla u      {\cdot} \nabla \psi\leq & \, \displaystyle \psi ^{\pl } \left( \epsilon \frac{\pl-1}{\pl}  |\nabla w|^{\pl} + \epsilon ^{-\pl +1} \frac{1}{\pl} \psi ^{-\pl} w^{\pl}|\nabla \psi|^{\pl} \right)\\[5pt]
 \leq & \, \displaystyle \epsilon \frac{\pl -1}{\pl } \psi ^{\pl } |\nabla w|^{\pl} +  \epsilon ^{-\pl +1} \frac{1}{\pl }  w^{\pl }|\nabla \psi|^{\pl} .
\end{align*}
Choosing $\epsilon = \frac{1}{2(\pl -1)}$, we obtain
\begin{equation}\label{6}
\begin{aligned}
I\geq & \, \displaystyle \frac{1}{2}\int_{B_{r}(x_{0})}  \psi^{\pl }|\nabla w|^{\pl} \, \dmvbd \\[5pt]
  & \, \displaystyle -   \left[2 (\pl  -1)\right] ^{\pl - 1}   \int_{B_{r}(x_{0})} w^{\pl}| \nabla \psi |^{\pl} \, \dmvbd .
\end{aligned}
\end{equation}

\medskip

{\bf Step 2.} Now we estimate $J$. We have 
$$
\begin{aligned}
  |u(x)&-u(y)|^{p(x,y)-2}(u(x)-u(y))\big(w(x) \psi(x) ^{\pl }-w(y) \psi (y) ^{\pl } \big) \\[5pt]
 = & \, \displaystyle |u(x)-u(y)|^{p(x,y) -2}(u(x)-u(y))\left[ (u(x)-k)_{+} \psi (x) ^{\pl } -(u(y)-k)_{+} \psi (y)^{\pl } \right] \\[5pt]
 = & \, \displaystyle \left\{ \begin{aligned}
&   |w(x)-w(y)|^{p(x,y)-2} (w(x)-w(y)) \left[ w(x) \psi (x) ^{\pl } -w(y) \psi (y)^{\pl } \right]   &   &   \text { if } u(x), u(y)>k , \\
&   (u(x)-u(y))^{p(x,y) -1} w(x) \psi (x)^{\pl }  &   &   \text { if } u(x)\geq k, u(y) \leq k, \\
&   (u(y)-u(x))^{p(x,y)-1} w(y) \psi (y)^{\pl }  &   &   \text { if } u(x)\leq k, u(y) \geq k, \\
&   0  &   &    \text { otherwise}.
\end{aligned}\right. \\[7pt]
 \geq & \, \displaystyle |w(x)-w(y)|^{p(x,y)-2} (w(x)-w(y))\big(w(x) \psi (x) ^{\pl }-w(y) \psi (y) ^{\pl } \big) .
\end{aligned}
$$

From Lemma \ref{7}, for $(x,y) \in B_r(x_0) \times B_r(x_0)$,
\begin{equation}\label{8}  
\begin{aligned}
 & |w(x)-w(y)|^{p(x,y)-2} (w(x)-w(y))  \big(w(x) \psi (x) ^{\pl } -w(y) \psi (y) ^{\pl } \big)\\[5pt]
& \qquad \begin{aligned}
 \geq & \, \displaystyle \frac{1}{2} |w(x)-w(y)|^{p(x,y)} \left(\max \left\{ \psi  (x) , \psi  (y)\right\}\right)^{\pl } \\[5pt]
 & \, \displaystyle -   \frac{\pl}{p^-} (2\pl)^{\pl -1} \left( \max \left\{ w(x), w(y)\right\} \right)^{p(x,y)} |\psi (x) - \psi (y)| ^{p(x,y)}      {.}
 \end{aligned}
\end{aligned}
\end{equation}

If $(x,y) \in (\Omega \setminus B_r(x_0) ) \times  B_r(x_0) $, then
\begin{equation} \label{12}
\begin{aligned}
  |w(x)-w(y)&|^{p(x,y)-2} (w(x)-w(y)) \bigl(w(x) \psi (x)^{\pl } -w(y) \psi (y) ^{\pl } \bigr)\\[5pt]
 = & \, \displaystyle -|w(x)-w(y)|^{p(x,y)-2} \bigl(w(x)-w(y)\bigr)w(y) \psi (y) ^{\pl }\\[7pt]
   \geq & \, \displaystyle - w (x) ^{p(x,y) -1}  w(y)\psi (y) ^{\pl }.
\end{aligned}
\end{equation}
Similarly, if $(x,y) \in   B_r(x_0) \times (\Omega \setminus B_r(x_0) ) $, then
\begin{equation} \label{13}
\begin{aligned}
 |w(x)-w(y)&|^{p(x,y)-2} (w(x)-w(y)) \bigl(  w(x) \psi (x)^{\pl }  -w(y) \psi (y) ^{\pl } \bigr)\\[5pt]
 = & \, \displaystyle  |w(x)-w(y)|^{p(x,y)-2} \bigl(w(x)-w(y)\bigr)w(x) \psi (x) ^{\pl } \\[7pt]
  \geq & \, \displaystyle - w(y) ^{p(x,y) -1} w(x)\psi (x) ^{\pl }.
\end{aligned}
\end{equation}

Hence,
\begin{equation}\label{11}
\begin{aligned}
J \geq & \, \displaystyle \frac{1}{2}  \int _{{B_r(x_0)}} \int _{{B_r(x_0)}} |w(x)-w(y)|^{p(x,y)} \bigl(\max \left\{ \psi  (x) , \psi  (y)\right\}\bigr)^{\pl } \, \dxu (x,y) \\[5pt]
& \, \displaystyle  - \frac{\pl \left(2\pl\right)^{\pl -1} }{p^-}\int _{{B_r(x_0)}} \int _{{B_r(x_0)}} \bigl( \max \left\{ w(x), w(y)\right\} \bigr)^{p(x,y)} |\psi (x) - \psi (y)| ^{p(x,y)} \, \dxu (x,y)\\[5pt]
& \, \displaystyle   - 2 \int_{\Omega  \setminus B_r (x_0)} \int_{B_r (x_0)} w(y)^{p(x, y)-1} w(x) \psi(x)^{\pl } \, \dxu (x,y) \textcolor{blue}{.}
\end{aligned}
\end{equation}

By \eqref{6} - \eqref{11}, we obtain \eqref{10}. In the case of a weak supersolution, the estimate in \eqref{10}  follows by applying the obtained result to $-u$.\end{proof}

Recall that for a measurable function $v : \Omega \rightarrow \mathbb{R}$, we define
\begin{equation*}
\tl (v ; x_0, r, \rho):=\underset{x \in B_\rho (x_0)}{\esssup} \, \int_{\Omega \setminus B_r(x_0)} \frac{|v(y)|^{p(x, y)-1}}{|y-x_0|^{n+s(x, y) p(x, y)}} \cdot \frac{1}{|y|^\beta} \, \dye ,
\end{equation*}
where $B_r (x_0)  \subset B_\rho (x_0) \subset \Omega$.

{For the following result, we need the auxiliary 
\begin{lemma} \label{143}
Assume that  $1<q _0 \leq q_1 <\infty$. Let $a,b\in \mathbb{R}$ and $\ell>1$. Then
$$
|a|^{q_0} -\ell ^{q_1-1} |b|^{q_0} \leq \left( \frac{\ell}{\ell -1}\right)^{q_0-1} |a-b|^{q_0}.
$$ 
\end{lemma}
\begin{proof}
By the convexity of $t \mapsto t^{q_0}$ on $[0, \infty)$, we have
\begin{align*}
|a|^{q_0} \leq & \, \left(|b| +|a-b|  \right)^{q_0}\\[4pt]
= & \,  \ell ^{q_0} \left(\frac{1}{\ell}|b| +\frac{\ell-1}{\ell} \cdot \frac{|a-b|}{\ell -1}  \right)^{q_0}\\[4pt]
\leq & \,  \ell ^{q_1-1} |b|^q + \left( \frac{\ell}{\ell -1}\right)^{q_0-1} |a-b|^{q_0}.
\end{align*}
This concludes the proof of the lemma.\end{proof}}

Next, we obtain a logarithmic energy estimate.
\begin{lemma} \label{h51} Suppose that {property \ref{144}} holds, and that
 $$
 n-2\beta -1 +s^->0,  \quad \beta\geq 0, \quad \text { and } \quad n-\beta>s^{+} \pl.
 $$ 
 Furthermore, let $d>0$ and $0<2r\leq R \leq 1/2$. Assume that $u$ is a weak supersolution of \eqref{4} such that $u \geq 0$ in $B_{R}(x_{0}) \subset \Omega$. 

If $0\notin \overline{B_{3R}(x_{0})}$ and  $0<R_0\leq|x_0|/2$, then there exists a constant $c_1 = c_1 (s^{-},  s^{+},  p^{-}, \pl , \beta, \Lambda, n)>0$ such that
\begin{equation}\label{h48}
\begin{aligned}
  \frac{1}{\mvbd \left(B_r(x_0)\right)}&\int_{B_{r}(x_{0})}|\nabla \ln (u+d)|^{\pl} \, \dmvbd  \\[5pt]
 \leq & \, \displaystyle c_1 \left(\frac{|x_0|+1}{R_0}\right)^\beta \max  \left\{ 1 , d^{p^{-}- \pl }\right\}  \left(r^{-s^+  \pl}+\frac{R_0^{n-s^+  \pl}}{|x_0|^n}\right) \\[5pt]
& \, \displaystyle  +c_1 r^{ -\pl} +c_1(|x_0|+1)^\beta d^{1-\pl}\tl (u_- ; x_0 , R , 3r/2).
\end{aligned}
\end{equation}

If $x_0=0$, then there exists a constant $c_2 = c_2 (s^{-}, p^{-}, s^{+}, \pl , \beta, \Lambda, n)>0$ such that
\begin{equation}\label{h49}
\begin{aligned}
&   \int_{B_{r}(x_{0})}|\nabla \ln (u+d)|^{\pl} \, \dmvbd  \\[5pt]
& \qquad \leq  c_2 \max \left\{1, d^{p^{-}-\pl}\right\} r^{n-s^+  \pl -2 \beta} + c_2 r^{n-\pl -2\beta}+c_2 d^{1-\pl } r^{n-\beta} \tl (u_{-} ; x_0,  R , 3  r / 2 ).
\end{aligned}
\end{equation}
\end{lemma}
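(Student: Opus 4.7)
The plan is to test the weak supersolution inequality \eqref{3} with the nonnegative function $\phi = (u+d)^{1-\pl}\psi^{\pl}$, where $\psi \in C_c^{\infty}(B_{3r/2}(x_0))$ is a standard cutoff with $0 \leq \psi \leq 1$, $\psi \equiv 1$ on $B_r(x_0)$, and $|\nabla \psi| \leq c/r$. Since $u \geq 0$ on $B_R(x_0) \supset B_{3r/2}(x_0)$ and $d > 0$, the factor $(u+d)^{1-\pl}$ is bounded, and a truncation argument justified by Lemmas \ref{176} and \ref{177} makes $\phi$ an admissible test function with $\spt \phi \subset B_{3r/2}(x_0)$. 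For the local term, a direct computation together with Young's inequality produces
$$
-\int |\nabla u|^{\pl-2}\nabla u \cdot \nabla \phi \, \dmvbd \geq \tfrac{\pl-1}{2} \int_{B_r(x_0)} |\nabla \log(u+d)|^{\pl}\, \dmvbd - c\, r^{-\pl}\mvbd(B_{3r/2}(x_0)),
$$
which supplies the $r^{-\pl}$ term in \eqref{h48} and the $r^{n-\pl-2\beta}$ term in \eqref{h49}.

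For the nonlocal term, the central ingredient is a variable-exponent adaptation of the logarithmic algebraic inequality of Di Castro--Kuusi--Palatucci: for $a,b \geq 0$, $p \in [p^-,\pl]$, and $\tau_1, \tau_2 \in [0,1]$,
$$
|a-b|^{p-2}(a-b)\bigl(\tau_1^{\pl}(a+d)^{1-\pl} - \tau_2^{\pl}(b+d)^{1-\pl}\bigr) \leq -c(\max\{\tau_1,\tau_2\})^{\pl}\left|\log\tfrac{a+d}{b+d}\right|^{\pl} + C\max\{1,d^{p^- - \pl}\}|\tau_1-\tau_2|^{\pl}.
$$
Applied to the near-diagonal integration $\iint_{B_{3r/2}\times B_{3r/2}}$, and after discarding the (nonpositive) leading logarithmic term, this reduces the diagonal contribution to a weighted integral of $|\psi(x)-\psi(y)|^{\pl}|x-y|^{-n-s\pl}$. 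The Lipschitz continuity of $\psi$ together with Lemma \ref{27} \ref{167} (applied with $\bbt = 2\beta$ to combine both weight factors) yields $r^{n-s^+\pl-2\beta}$ in \eqref{h49}; for $0 \notin \overline{B_{3R}(x_0)}$ the weight $|x|^{-2\beta}$ is essentially constant on $B_{3r/2}(x_0)$, producing the $r^{-s^+\pl}$ contribution after normalization by $\mvbd(B_r(x_0))$. A further splitting of the integration at radius $R_0$ around the origin, combined with Lemma \ref{27} \ref{28}, yields the additional $R_0^{n-s^+\pl}/|x_0|^n$ correction in \eqref{h48}.

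The pairs $(x,y) \in B_{3r/2}(x_0) \times (\Omega \setminus B_R(x_0))$ (and the symmetric counterpart), on which $\phi(y)=0$, are handled by standard tail estimates: since $u(x) \geq 0$ and $\phi(x) \leq d^{1-\pl}\psi(x)^{\pl}$, one bounds $|\mathcal{A}(u,x,y)|\phi(x) \leq C d^{1-\pl} u_-(y)^{p(x,y)-1}\psi(x)^{\pl}$ modulo terms that cancel in pairs, and integration against $\dxu$ produces the tail $\tl(u_-;x_0,R,3r/2)$ with prefactor $d^{1-\pl}$ multiplied by $r^{n-\beta}$ in the case $x_0 = 0$ or by $(|x_0|+1)^\beta\mvbd(B_r(x_0))$ in the case $x_0 \neq 0$, the latter coming from $\int_{B_r(x_0)}\psi^{\pl}|x|^{-\beta}\,\dxe$. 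In the second case, dividing through by $\mvbd(B_r(x_0))$ and using the doubling estimate \eqref{68} from Lemma \ref{180} \ref{66} to compare $B_{R_0}$- and $B_r$-balls delivers the final $(|x_0|+1)^\beta/R_0^\beta$ prefactor.

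The main technical obstacle is establishing the variable-exponent logarithmic algebraic inequality with errors that remain integrable against the weighted singular kernel; the hypotheses $n-\beta > s^+\pl$ and $n-2\beta-1+s^->0$ are precisely what guarantee this when Lemma \ref{27} is invoked with $\bbt = 2\beta$. A secondary difficulty is the uniform tracking of constants in $|x_0|$ when $x_0 \neq 0$, since the weight $|x|^{-2\beta}$ is not globally bounded; the auxiliary parameter $R_0 \leq |x_0|/2$ is introduced precisely to isolate the contribution of the tail near the origin via Lemma \ref{27} \ref{28}.
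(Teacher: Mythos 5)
Your proposal follows essentially the same route as the paper: the same test function $\phi=(u+d)^{1-\pl}\psi^{\pl}$ in \eqref{3}, the same three-way decomposition (local term, near-diagonal nonlocal term, cross/tail term), the good term $|\nabla\ln(u+d)|^{\pl}$ extracted from the \emph{local} part via Young's inequality, the tail handled through $u_-$ and $\tl(u_-;x_0,R,3r/2)$, and the weighted kernel integrals controlled by Lemma \ref{27}, with the $R_0$-splitting near the origin and Lemma \ref{27} \ref{28} producing the $R_0^{n-s^+\pl}/|x_0|^n$ correction exactly as in the paper. The one genuine deviation is your treatment of the near-diagonal nonlocal term: you invoke a variable-exponent Di Castro--Kuusi--Palatucci logarithmic inequality, flag its proof as the main technical obstacle, and then immediately discard the nonpositive logarithmic term it would provide. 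That detour is unnecessary (and left unproven): once the log term is dropped, all you need is the one-sided elementary bound used in the paper, namely that the integrand is at most $\max\{1,d^{p^--\pl}\}\,|\psi(x)^{\pl}-\psi(y)^{\pl}|$ (estimate \eqref{h24}), which follows directly from $u\geq 0$ on $B_R(x_0)$ without any algebraic lemma; in this mixed local--nonlocal setting the logarithmic good term is supplied entirely by the $\pl$-Laplacian part, so no nonlocal log estimate is required. With that simplification your outline matches the paper's proof; the remaining bookkeeping (e.g.\ treating the region $B_{3r/2}(x_0)\times(B_R(x_0)\setminus B_{3r/2}(x_0))$, where $u\geq 0$ gives the $\max\{1,d^{p^--\pl}\}$ kernel contribution rather than a tail term) is standard and consistent with what you describe.
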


\begin{proof} Let $\psi \in C_{c}^{\infty}(B_{3r/2}(x_{0}))$ be such that 
$$
0 \leq \psi \leq 1 \text  { in } B_{3 r/2}(x_{0}), \quad  \psi=1 \text {  in } B_{r}(x_{0}), \quad \text { and } \quad |\nabla \psi| \leq \frac{{C_0}  }{ r} \text { in } B_{3 r/2}(x_{0}).
$$
 By choosing $\phi=(u+d)^{1-\pl} \psi^{\pl}$ as a test function in \eqref{3}, we obtain
\begin{align*}
0 \leq & \, \displaystyle  \int_{B_{2 r}(x_{0})} \int_{B_{2 r}(x_{0})} \mathcal{A}(u,x, y) \left[  (u(x)+d)^{1-\pl} \psi(x)^{\pl}-(u(y)+d)^{1-\pl} \psi(y)^{\pl}  \right] \, \dxu  (x,y) \\[5pt]
& \, \displaystyle   +2 \int_{\Omega \setminus B_{2 r}(x_{0})} \int_{B_{2 r}(x_{0})} \mathcal{A}(u,x, y)(u(x)+d)^{1-\pl} \psi(x)^{\pl} \, \dxu (x,y)  \\[5pt]
& \, \displaystyle   + \int_{B_{2 r}(x_{0})}|\nabla u|^{\pl-2} \nabla u       {\cdot} \nabla \left[ (u+d)^{1-\pl} \psi^{\pl} \right]  \, \dmvbd \\[7pt]
 = & \, \displaystyle  I_{1}+I_{2}+I_{3}.
\end{align*}

\medskip

{\bf Estimate of $I_{1}$:}  If $u(x)>u(y)$, {take
$$
q_0 =p(x,y), \quad q_1= \pl, \quad  a= \psi(x) ^{\frac{\pl}{p(x,y)}}, \quad b= \psi(y) ^{\frac{\pl}{p(x,y)}}, \quad  \ell = \frac{u(x)+d}{u(y)+d}
$$
in Lemma \ref{143}. Then we have}
\begin{align}
  |u(x)-& u(y)|^{p(x,y)-2}(u(x)-u(y))  \left[  \frac{\psi(x)^{\pl}}{(u(x)+d)^{\pl-1}}-\frac{\psi(y) ^{\pl }}{(u(y)+d)^{\pl-1}} \right] \nonumber \\[5pt]
   {=} & \, \displaystyle   {\frac{(u(x)-u(y))^{p(x,y) -1}}{(u(x)+d)^{\pl -1}}   \left[  \psi(x)^{\pl}  -\left( \frac{u(x)+d }{u(y)+d} \right)^{\pl-1} \psi(y) ^{\pl } \right] } \nonumber \\[5pt] 
   {\leq} & \, \displaystyle   {\frac{(u(x)-u(y))^{p(x,y) -1}}{(u(x)+d)^{\pl -1}} \left( \frac{u(x)+d }{u(x)-u(y)} \right)^{p(x,y)-1}   \left|  \psi(x)^{\frac{\pl}{p(x,y)}}  - \psi(y) ^{\frac{\pl}{p(x,y)}} \right|^{p(x,y)} } \nonumber \\[5pt] 
   {\leq} & \, \displaystyle  {\frac{1}{(u(x)+d)^{\pl -p(x,y) }}   \left|  \psi(x)^{\frac{\pl}{p(x,y)}}  - \psi(y) ^{\frac{\pl}{p(x,y)}} \right|^{p(x,y)}} \nonumber\\[7pt]
    {\leq}  & \, \displaystyle {\max \{ 1  , d^{p^-  - \pl} \}  \left|  \psi(x)^{\frac{\pl}{p(x,y)}}  - \psi(y) ^{\frac{\pl}{p(x,y)}} \right|^{p(x,y)}}. \label{h24}
\end{align}
A similar estimate is obtained when $u(x)<u(y)$.

Write
$$
p^-_{2r} = \inf _{B_{2 r}(x_0) \times B_{2 r}(x_0) } p \quad \text { and } \quad p^+_{2r} = \sup _{B_{2 r}(x_0) \times B_{2 r}(x_0) } p.
$$

     {Since
\begin{equation*}\label{146}
\left|\psi (x) ^{q_0}-\psi (y)^{q_0} \right| \leq q_0 \left(\sup _{B_{2r} (x_0)} \psi ^{q_0 -1} \left|\nabla \psi \right|\right)|x-y|
\end{equation*}
for all $x, y \in B_{2r}(x_0)$ and $q_0 \geq 1$, there exists a constant $C_1 = C_1(C_0, \pl) > 1$} such that
\begin{equation}\label{h22}
\begin{aligned}
\left|  \psi(x)^{\frac{\pl}{p(x,y)}}  - \psi(y) ^{\frac{\pl}{p(x,y)}} \right|^{p(x,y)} \leq  &  C_1 ^{p(x,y)}r^{-p(x,y)}|x-y|^{p(x,y)}\\[5pt]
\leq & C_1 ^{\pl}r^{-p^+_{2r}}|x-y|^{p(x,y)}
\end{aligned}
\end{equation} 
for all $x,y \in B_{2r} (x_0)$.

If $0\not\in \overline{B_{3R}(x_0)}$, then from \eqref{h24}, \eqref{h22}, and condition  \ref{144}, we have
\begin{equation} \label{h25}
\begin{aligned}
 I_1 \leq & \, \displaystyle \frac{C_1^{\pl}\Lambda \max \{ 1 , d^{p^-  - \pl} \}}{(|x_0| -2r)^{\beta}} r^{-{p^+_{2r}}}\int _{B_{2r} (x_0)} \frac{1}{|y|^\beta} \int_{B_{4r}(y)}  |x-y|^{-n+(1 - s^+ ){p^-_{2r}}}  \, \dxe \dye\\[5pt]
= & \, \displaystyle  \frac{2 ^{ 2(1-s^+ ){p^-_{2r}}} |\partial B_1 (0)|  C_1^{\pl}\Lambda  }{  (1-s^+ ){p^-_{2r}}  } \left(\frac{ |x_0| +2r }{|x_0| -2r} \right)^{\beta} \\[5pt]
& \, \displaystyle   \cdot \max \{ 1 , d^{p^-  - \pl} \}  {r^{p^-_{2r} - p^+_{2r}}} r^{-s^+ \pl } \mvbd \left(B_{2r} (x_0) \right)\\[7pt]
\leq & \, \displaystyle   c(\beta ,n) \frac{2 ^{ 2(1-s^+ )\pl}C_1^{\pl}\Lambda (2)^{\beta}}{  (1-s^+ ){p^-} } {\mathtt{C} 2^{\pl - p^-}}\\[5pt]
& \, \displaystyle  \cdot \max \{ 1 , d^{p^-  - \pl} \}  r^{-s^+ \pl } \mvbd \left(B_{3r/2} (x_0) \right),
\end{aligned}
\end{equation}
because $6r\leq |x_0|$, and by Lemma \ref{180} \ref{66}.

If $x_0=0$, then employing Lemma \ref{27} \ref{23}, we get
\begin{equation} \label{h26}
\begin{aligned}
I_1 \leq & \, \displaystyle C_2  r^{-{p^+_{2r}}}\int _{B_{2r} (0)}   \frac{1}{|y|^{\beta}} \int_{B_{2r}(0)} \frac{1}{|x-y|^{n- (1-s^+ )} }\frac{1}{|x|^{\beta}} |x-y|^{(1-s^+ ) ({p^-_{2r}}-1)}  \, \dxe \dye \\[5pt]
 \leq & \, \displaystyle C_2 C_3 r^{-{p^+_{2r}}+(1-s^+ ) ({p^-_{2r}}-1)} \int_{B_{2r} (0)} \frac{(2r)^{2(1-s^+ )}}{|y|^{2\beta + (1-s^+ )}} \, \dye\\[7pt]
      {\leq} & \, \displaystyle C_2 C_3 \frac{2^{n-2\beta +1-s^+ } |\partial B_1 (0)|}{\left[ n-2\beta -(1-s^+ )\right]} {r^{ p_{2r}^{-} - p_{2 r}^{+}} }r^{n-2\beta-s^+  \pl}.
\end{aligned}
\end{equation}
Here $C_2= C_1^{\pl}\Lambda \max \{1 , d^{p^-  - \pl} \}$ and $C_3 = C_3 (s^+,\pl,\beta,n)>0$. {Observe that, by condition \eqref{145} in \ref{144}, $r^{ p_{2r}^{-} - p_{2 r}^{+}}$ is bounded by  $\mathtt{C} 2^{\pl - p^-}$.}

\medskip

{\bf Estimate of $I_{2}$:} We follow the lines of the proof of \cite[p 1290, Lemma 1.3]{di2016localfract}.

Notice that when $y \in B_R(x_0)$, we have $u(y) \geq 0$, and so
\begin{equation}\label{95}
\begin{aligned}
 \frac{(u(x)-u(y))_{+}^{p(x,y)-1}}{(d+u(x))^{\pl-1}} = & \, \displaystyle \left[\frac{(u(x)-u(y))_{+}}{d+u(x)}\right]^{p(x,y)-1}  \cdot (d+u(x))^{p(x,y)-\pl }   \\[5pt]
 \leq & \, \displaystyle  \max \left\{ 1  , d^{ p^-  -\pl }  \right\}, 
\end{aligned}
\end{equation}
for all $x \in B_{2 r} (x_0)$ and $y \in B_R(x_0)$. 

Moreover, when $y \in \Omega \setminus B_R(x_0)$,
\begin{equation}\label{96}
\begin{aligned}
& \, \displaystyle \frac{(u(x)-u(y))_{+}^{p (x,y) -1} }{(d+u(x))^{\pl-1}}\\[5pt]
& \, \displaystyle \quad \ \leq 2^{p(x,y)-1}\left[  \left( \frac{u(x)}{d+u(x)} \right)^{p(x,y)-1}  \cdot (d+u(x))^{p(x,y)-\pl}+\frac{(u_{-}(y))^{p(x,y)-1}}{(d+u(x))^{\pl-1}}\right]\\[7pt]
& \, \displaystyle  \quad \ \leq 2^{\pl-1}\left[   \max \left\{ 1 , d^{ p^-  -\pl }  \right\} +d^{1-\pl} (u_{-}(y))^{p(x,y)-1} \right],
\end{aligned}
\end{equation}
for all $x \in B_{2 r} (x_0) $.

We have $\spt \psi \subset B_{3r/2}(x_0) $. From \eqref{95} and \eqref{96}, we see that
$$
\begin{aligned}
 I_2 \leq & \, \displaystyle   2  \int_{B_{2 r} (x_0)} \int_{B_R(x_0) \setminus B_{2 r} (x_0)} K(x, y)(u(x)-u(y))_{+}^{p(x,y)-1}(d+u(x))^{1-\pl} \psi(x)^{\pl} \frac{1}{|x|^\beta|y|^\beta}\,  \dye \dxe \\[4pt]
& \, \displaystyle   +2  \int_{B_{2 r} (x_0)} \int_{\Omega \setminus B_R (x_0)} K(x, y)(u(x)-u(y))_{+}^{p(x,y)-1}(d+u(x))^{1-\pl} \psi(x)^{\pl} \frac{1}{|x|^\beta|y|^\beta}\, \dye \dxe   \\[4pt]
 \leq  & \, \displaystyle  2 \Lambda \max \left\{1 , d^{p^{-}-\pl}\right\}  \left[\underbrace{ \int_{B_{3 r/2} (x_0)} \int_{{B_{R} (x_0)} \setminus B_{2 r} (x_0)} \frac{ 1}{|x-y|^{n+s^+  \pl}}  \cdot \frac{1}{|x|^\beta |y|^\beta} \, \dye \dxe }_{I_{2,1}}\right.\\[4pt]
& \, \displaystyle   + \left. 2^{\pl -1} \underbrace{\int_{B_{3 r/2} (x_0)} \int_{\Omega \setminus B_{2 r} (x_0)} \left( \frac{ 1}{|x-y|^{n+s^- p^-}} + \frac{ 1}{|x-y|^{n+s^+ \pl}} \right)\frac{1}{|x|^\beta |y|^\beta} \, \dye \dxe  }_{I_{2,2}}\right]\\[4pt]
& \, \displaystyle   + 2^{{\pl}} \Lambda d^{1-\pl} \underbrace{ \int_{B_{3 r/2} (x_0)}  \int_{\Omega \setminus B_R (x_0)}  \frac{(u_{-}(y))^{p(x,y)-1} }{|x-y|^{n+s(x,y)p(x,y)}} \cdot \frac{1}{|x|^\beta |y|^\beta}  \, \dye \dxe}_{I_{2,3}}.
\end{aligned}
$$

If $0\not\in \overline{B_{3R}(x_0)}$ and $0<R_0 \leq |x_0|/2$,  then
\begin{equation}\label{106}
\begin{aligned}
I_{2,1}\leq & \, \displaystyle  \frac{1}{R_0^\beta}  \int_{B_{3 r / 2}(x_0)} \frac{1}{|x|^\beta} \int_{\mathbb{R}^n \setminus \left( B_{2 r}(x_0) \cup B_{R_0} (0)\right)} \frac{4^{n+s^+ \pl}}{|x_0-y|^{n+s^+  \pl  }} \,  \dye \dxe\\[5pt]
& \, \displaystyle  + \int_{B_{3 r / 2}(x_0)} \frac{1}{|x|^\beta} \int_{B_{R_0} (0)} \frac{4^{n+s^+ \pl}}{|x_0-y|^{n+s^+  \pl  }}  \cdot \frac{1}{|y|^\beta} \,  \dye \dxe .
\end{aligned}
\end{equation}
In the estimate above, we used the fact that for $(x, y) \in B_{3 r / 2} (x_0) \times \mathbb{R}^n \setminus B_{2r} (x_0)$, it holds that $|x-y|\geq r/2$, and moreover,
\begin{equation} \label{h37}
\frac{|y-x_0|}{|y-x|} \leq 1+ \frac{ |x-x_0|}{|x-y|} \leq 1+ \frac{3/2}{1/2}=4.
\end{equation}
By  Lemma \ref{27} \ref{28}, we deduce
\begin{equation}\label{105}
\begin{aligned}
   \frac{1}{R_0^\beta}  &\int _{\mathbb R ^n \setminus B_{2r} (x_0)} \frac{1}{|x_0-y|^{n+s^+  \pl  }} \, \dye  + \int _{B_{R_0} (0)} \frac{1}{|x_0-y|^{n+s^+  \pl  }} \cdot \frac{1}{|y|^\beta} \, \dye \\[5pt]
 \leq & \, \displaystyle    C_4(s^+,\pl,\beta,n) \frac{(2r)^{-s^+  \pl}}{R_0 ^\beta}\cdot \frac{\left(|x_0| + \frac{3r}{2} \right)^\beta }{|x|^\beta} 
   + C_4  \frac{R_0^{n  -s^+  \pl   -\beta}}{|x_0|^{n}} \cdot \frac{\left(|x_0| + \frac{3r}{2} \right)^\beta}{|x|^\beta} ,
\end{aligned}
\end{equation}
because 
\begin{equation}\label{182}
\frac{|x_0|+ \frac{3r}{2}}{|x|}> 1 \quad \text { in } \quad B_{3r/2} (x_0).
\end{equation}

From \eqref{106} and \eqref{105}, it follows that
\begin{equation}\label{h36}
\begin{aligned}
I_{2,1}\leq & \, \displaystyle C_5(s^+,\pl,\beta,n)   \frac{\left(|x_0| + \frac{ 3r}{2}\right) ^{\beta}}{R_0 ^\beta} \left(2r\right)^{-s^+  \pl  } \mvbd \left(B_{3r/2} (x_0)\right)\\[5pt]
& \, \displaystyle  + C_5 \frac{\left(|x_0| + \frac{ 3r}{2}\right) ^{\beta} R_0^{n -s^+  \pl   -\beta} }{|x_0|^{n}}\mvbd \left(B_{3r/2} (x_0)\right).
\end{aligned}
\end{equation}

Now we estimate $I_{2,1}$ when $x_0=0$. From \eqref{h37},
\begin{equation}\label{h38} 
\begin{aligned}
I_{2,1} \leq & \, \displaystyle 2^{2(n+s^+  \pl  )} \int_{B_{3 r / 2}(0)}  \frac{1}{|x|^\beta} \int_{\mathbb{R}^n \setminus B_{2 r}(0)} \frac{1}{|y|^{n+s^+  \pl   + \beta}}  \, \dye \dxe\\[5pt]
 \leq & \, \displaystyle C_6(s^+,\pl,\beta, n) r^{n - s^+  \pl -2\beta}.
\end{aligned}
\end{equation}

Similarly to what we did for $I_{2,1}$, for $I_{2,2}$ we have
\begin{equation}\label{97}
I_{2,2} \leq \left\{\begin{aligned} & \, \displaystyle  C_7 \mvbd \left(B_{3r/2} (x_0)\right)  \left(\frac{|x_0|+1}{R_0}\right)^\beta \left( r^{-s^+  \pl  } + \frac{R_0^{n-s^+  \pl}}{|x_0|^{n}} \right)& \, \displaystyle  & \, \displaystyle  \text { if } 0\not\in \overline{B_{3R}(x_0)},\\
&C_7 r^{n-s^+  \pl - 2\beta} & \, \displaystyle  & \, \displaystyle  \text { if } x_0 =0,
\end{aligned}\right.
\end{equation}
where $C_7 = C_7(s^{-}, s^{+}, p^{-}, \pl , \beta, n)>0$.

For $I_{2,3}$, using  \eqref{h37} and \eqref{182}, we obtain
\begin{equation} \label{h42}
I_{2,3} \leq \left(|x_0|+\frac{3r}{2}\right)^\beta \mvbd \left(B_{3r/2} (x_0)\right)\tl (u_- ; x_0  , R , 3r/2) .
\end{equation}

Therefore, from \eqref{h36},  \eqref{97}, and \eqref{h42},
\begin{equation}\label{h44}
\begin{aligned}
 I_{2} \leq & \, \displaystyle  C_8 \max \left\{1, d^{p^{-}-\pl}\right\} \mvbd \left(B_{3r/2} (x_0)\right)  \left(\frac{|x_0|+1}{R_0}  \right)^\beta \left( r^{-s^+  \pl  } + \frac{R_0^{n-s^+  \pl}}{|x_0|^n} \right) \\[5pt]
& \, \displaystyle   + C_8 d^{1-\pl}\left(|x_0|+1\right)^\beta \mvbd \left(B_{3r/2} (x_0)\right)\tl (u_- ; x_0  , R , 3r/2) \quad \text { if } \quad 0\notin \overline{B_{3R}(x_0)},
\end{aligned}
\end{equation}
where $C_8=C_8(s^-,s^+, p^-, \pl, \beta , \Lambda , n )>0$.

And, if $x_0=0$, combining \eqref{h38}, \eqref{97}, and \eqref{h42}, we obtain
\begin{equation}\label{h45}
I_2\leq C_9 \max \left\{1, d^{p^{-}-\pl}\right\} r^{n-2 \beta-s^+  \pl  } + C_9 d^{1-\pl } r^{n-\beta} \tl (u_{-} ; x_0 , R , 3 r / 2),
\end{equation}
where $C_9=C_9(s^-,s^+, p^-, \pl, \beta , \Lambda , n )>0$.

\medskip

{\bf Estimate of $I_{3}$:} We have
\begin{equation*}
\begin{aligned}
I_{3} =& \, \displaystyle  \int_{B_{2 r}(x_{0})}|\nabla u|^{\pl - 2} \nabla u  \nabla \left[ (u+d)^{1-\pl} \psi^{\pl}\right]  \, \dmvbd \\[5pt]
 \leq & \, \displaystyle  (1-\pl) \int_{B_{2r}(x_{0})}|\nabla \ln (u+d)|^{\pl} \psi ^{\pl} \, \dmvbd +C_1 \pl\int _{B_{2r}(x_0)} |\nabla u|^{\pl -1} (u+d)^{1-\pl} \psi ^{\pl -1}\frac{1}{r} \, \dmvbd .
\end{aligned}
\end{equation*}

Employing Young's inequality:
$$
|\nabla u|^{\pl -1} (u+d)^{1-\pl} \psi ^{\pl -1}\frac{1}{r} \leq \frac{\pl-1}{\pl} \epsilon  |\nabla u|^{\pl} (u+d)^{-\pl} \psi ^{\pl} + \frac{1}{\pl} \epsilon ^{-\pl +1}\frac{1}{r^{\pl}}.
$$
Choosing $\epsilon = \frac{1}{2C_1}$, we obtain
\begin{equation}\label{h46}
I_3\leq -\frac{\pl-1}{2} \int_{B_{r}(x_{0})}|\nabla \ln (u+d)|^{\pl}  \, \dmvbd + C_1 (2C_1)^{\pl -1} \frac{ 1}{ r^{\pl} } \mvbd \left( B_{3r/2} (x_0)\right).
\end{equation}

Hence, using \eqref{h25}, \eqref{h44},  \eqref{h46}, and Lemma \ref{180} \ref{66}, the estimate \eqref{h48} follows. Furthermore, employing \eqref{h26}, \eqref{h45}, and \eqref{h46}, we obtain \eqref{h49}.
\end{proof}

As a consequence of Lemma \ref{h51}, we have the following result.
\begin{corollary} \label{h65}
Suppose that {property \ref{144}} holds, and that $n-2\beta -1 +s^->0$, $\beta\geq 0$, and $n-\beta>s^{+} \pl$. Furthermore, let $d>0$ and $0<2r \leq R\leq 1/2$. Assume that $u$ is a weak supersolution of \eqref{4} such that $u \geq 0$ in $B_{R}(x_{0}) \subset \Omega$.

Let $a, d>0$ and  $b>1$, and define
$$
v=\min \left\{\left(\ln \left(\frac{a+d}{u+d} \right)\right)_{+}, \ln b\right\}.
$$

If $0\not\in \overline{B_{3R}(x_{0})}$ and  $0<R_0\leq |x_0|/2$, then there exists a constant $c_1 = c_1 (s^{-}, p^{-}, s^{+}, \pl , \beta, \Lambda, n)>0$ such that
\begin{equation}\label{h63}
\begin{aligned}
 \displaystyle  \fint _{B_{r}(x_{0})} & \left|v-(v)_{B_{r}(x_{0}), \mvbd }\right|^{\pl} \, \dmvbd \\[5pt] 
 \leq & \, \displaystyle  c_1 \max  \left\{ 1 , d^{p^{-}- \pl }\right\} \left( \frac{|x_0|+1}{R_0}\right)^{\beta}\left(r^{\pl-s^+  \pl }+\frac{R_0^{n-s^+  \pl  }}{|x_0|^{n}} r^{\pl}\right) \\[5pt]
& \, \displaystyle   +c_1 +c_1 d^{1-\pl} (|x_0|+1)^{\beta}r^{\pl}\tl (u_- ; x_0  , R , 3r/2).
\end{aligned}
\end{equation}

If $x_0=0$, then there exists a constant $c_2 = c_2 (s^{-}, p^{-}, s^{+}, \pl , \beta, \Lambda, n)>0$ such that
\begin{equation}\label{h64}
\begin{aligned}
 \displaystyle  \fint _{B_{r}(x_{0})} & \left|v-(v)_{B_{r}(x_{0}), \mvbd }\right|^{\pl} \, \dmvbd  \\[5pt]
  \leq & \, \displaystyle  c_2 \max \left\{1, d^{p^{-}-\pl}\right\} r^{\pl-s^+  \pl} + c_2 +c_2 d^{1-\pl } r^{\pl +\beta} \tl (u_{-} ; x_0 , R , 3 r / 2 ).
\end{aligned}
\end{equation}

\end{corollary}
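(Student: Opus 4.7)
The plan is to reduce the oscillation bound on $v$ to a weighted Poincaré–type inequality applied to $v$, then feed in Lemma \ref{h51}. Since $v$ is a truncation of $\ln(a+d) - \ln(u+d)$, the chain rule yields the pointwise bound
$$
|\nabla v| \leq |\nabla \ln(u+d)| \cdot \chi_{\{0 \,<\, \ln((a+d)/(u+d)) \,<\, \ln b\}} \leq |\nabla \ln(u+d)| \quad \text{a.e. on } B_r(x_0),
$$
and $v$ inherits the regularity $v \in W^{1,\pl}_{\loc}(B_R(x_0); \mvbd)$ from $u+d \geq d > 0$ together with $u \in W^{1,\pl}_{\loc}$.

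Next I invoke Corollary \ref{59} with $\bbt = 2\beta$, $q = \pl$, and some $p_0 \in (1, \pl)$ satisfying $1 - n(1/p_0 - 1/\pl) \geq 0$. The admissible range $[\max\{1, n\pl/(n+\pl)\},\, \pl)$ is always nonempty, and the inequality extends from Lipschitz to weighted Sobolev functions by density (standard since $\mvbd \in A_{\pl}$ by Lemma \ref{180}). Chaining with Hölder's inequality for averages
$$
\left( \fint_{B_r(x_0)} |\nabla v|^{p_0} \, \dmvbd \right)^{1/p_0} \leq \left( \fint_{B_r(x_0)} |\nabla v|^{\pl} \, \dmvbd \right)^{1/\pl}
$$
and the gradient bound above, I obtain
$$
\fint_{B_r(x_0)} \left| v - (v)_{B_r(x_0), \mvbd} \right|^{\pl} \, \dmvbd \leq c \, r^{\pl} \fint_{B_r(x_0)} |\nabla \ln(u+d)|^{\pl} \, \dmvbd,
$$
with $c$ depending only on $p^-$, $\pl$, $\beta$, and $n$.

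To conclude, I plug in Lemma \ref{h51}. In the case $0 \notin \overline{B_{3R}(x_0)}$, multiplying the right side of \eqref{h48} by $r^{\pl}$ converts $r^{-s^+\pl}$, $R_0^{n-s^+\pl}/|x_0|^n$, $r^{-\pl}$, and $(|x_0|+1)^{\beta}\tl$ into exactly the four terms on the right of \eqref{h63}. In the case $x_0 = 0$, I divide \eqref{h49} by
$$
\mvbd(B_r(0)) = \frac{|\partial B_1(0)|}{n - 2\beta}\, r^{n - 2\beta},
$$
which is finite and positive because the hypothesis $n - 2\beta - 1 + s^- > 0$ forces $n > 2\beta$; multiplying afterwards by $r^{\pl}$ transforms the exponents $n - s^+\pl - 2\beta$, $n - \pl - 2\beta$, and $n - \beta$ into $\pl - s^+\pl$, $0$, and $\pl + \beta$ respectively, matching \eqref{h64}. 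The only genuinely nontrivial step I expect is the density extension of Corollary \ref{59} to the weighted Sobolev setting; the remainder is bookkeeping of exponents and constants.
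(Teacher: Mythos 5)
Your proposal is correct and follows essentially the same route as the paper: the weighted Poincaré inequality of Corollary \ref{59} (with weight $|x|^{-2\beta}$, plus Hölder to pass from $p_0<\pl$ to $\pl$), the pointwise bound $|\nabla v|\leq|\nabla\ln(u+d)|$ coming from the truncation, and then Lemma \ref{h51}, with the same exponent bookkeeping in both cases $0\notin\overline{B_{3R}(x_0)}$ and $x_0=0$ (including the computation $\mvbd(B_r(0))\sim r^{n-2\beta}$). The only difference is that you make explicit the density extension of Corollary \ref{59} beyond Lipschitz functions and the choice of $p_0$, which the paper leaves implicit.
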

\begin{proof} By Corollary \ref{59}, we have
\begin{equation}\label{h60}
\begin{aligned}
 \displaystyle \fint _{B_{r}(x_{0})} & \left|v-(v)_{B_{r}(x_{0}), \mvbd }\right|^{\pl} \, \dmvbd \\[5pt]
  \leq & \, \displaystyle C_1(\beta,\pl,n) \frac{1}{\mvbd \left( B_r (x_0)\right)} r^{\pl} \int_{B_{r}(x_{0})}|\nabla v|^{\pl} \, \dmvbd \quad \text { if }  \quad 0\notin \overline{B_{3R}(x_0)},
\end{aligned}
\end{equation}
and
\begin{equation}\label{h61}
\begin{aligned}
 \displaystyle \fint _{B_{r}(x_{0})} & \left|v-(v)_{B_{r}(x_{0}), \mvbd }\right|^{\pl} \, \dmvbd \\[5pt]
 \leq & \, \displaystyle  C_2(\beta,\pl,n)  r^{\pl-n+2\beta} \int_{B_{r}(x_{0})}|\nabla v|^{\pl} \, \dmvbd \quad \text { if } \quad x_0=0.
\end{aligned}
\end{equation}

Since $v$ is a truncation of the sum of a constant and $\ln (u+d)$, it holds that
\begin{equation}\label{h62}
\int_{B_{r}(x_{0})}|\nabla v|^{\pl} \, \dmvbd \leq \int_{B_{r}(x_{0})}|\nabla \ln (u+d)|^{\pl} \, \dmvbd .
\end{equation}

The estimates in \eqref{h63} and \eqref{h64} follow by applying Lemma \ref{h51} together with \eqref{h60} - \eqref{h62}.\end{proof}

\section{Local boundedness} \label{186}

We apply the following lemma from real analysis. For the proof of Lemma \ref{84}, see \cite[Lemma 4.1]{dibenedetto1993degenerate}.

\begin{lemma}\label{84}
 Let $(Y_{j})_{j=0}^{\infty}$ be a sequence of positive real numbers such that $Y_{0} \leq c_{0}^{-\frac{1}{\gamma}} b^{-\frac{1}{\gamma^{2}}}$ and $Y_{j+1} \leq c_{0} b^{j} Y_{j}^{1+\gamma}, j=0,1,2, \ldots$, for some constants $c_{0}, b>1$ and $\gamma>0$. Then $\lim _{j \rightarrow \infty} Y_{j}=0$.
\end{lemma}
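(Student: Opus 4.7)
The plan is to prove the geometric decay bound $Y_j \leq Y_0 b^{-j/\beta}$ for every $j \geq 0$ by induction on $j$; since $b > 1$, this immediately yields $Y_j \to 0$. The strength of the hypothesis $Y_0 \leq c_0^{-1/\beta} b^{-1/\beta^2}$ is exactly what is needed to propagate the inductive step, so the argument is essentially an algebraic verification.

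The base case $j=0$ is trivial. For the inductive step, assume $Y_j \leq Y_0 b^{-j/\beta}$. Plugging into the recursion,
\begin{equation*}
Y_{j+1} \leq c_0 b^{j} Y_j^{1+\beta} \leq c_0 b^{j} Y_0^{1+\beta} b^{-j(1+\beta)/\beta} = c_0 Y_0^{1+\beta} b^{-j/\beta}.
\end{equation*}
To close the induction I need $c_0 Y_0^{1+\beta} b^{-j/\beta} \leq Y_0 b^{-(j+1)/\beta}$, which simplifies to $c_0 Y_0^{\beta} \leq b^{-1/\beta}$, i.e. $Y_0 \leq c_0^{-1/\beta} b^{-1/\beta^2}$. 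This is precisely the smallness assumption on $Y_0$, so the induction goes through.

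Having established $0 < Y_j \leq Y_0 b^{-j/\beta}$ for all $j$, letting $j \to \infty$ and using $b > 1$ (so $b^{-j/\beta} \to 0$) gives $Y_j \to 0$, as claimed. There is no genuine obstacle here: the only subtlety is choosing the right ansatz for the induction (a pure geometric decay $Y_0 b^{-j/\beta}$ rather than, say, something depending on $c_0$), and then observing that the hypothesis on $Y_0$ is sharp in the sense that it is exactly the equality case of the constraint produced by the inductive step.
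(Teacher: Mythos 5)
Your induction is correct: the exponent bookkeeping checks out, and the smallness hypothesis on $Y_0$ is exactly what closes the inductive step, giving $Y_j \leq Y_0\, b^{-j/\beta} \to 0$. The paper does not prove this lemma itself but cites DiBenedetto's fast geometric convergence lemma, whose standard proof is precisely this induction, so your argument coincides with the referenced one.
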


We now show that weak subsolutions of \eqref{4} are locally bounded. The statement of this result, as given in the introduction, is as follows:


\medskip

\noindent {\bf Theorem \ref{129}.} {\it (Local boundedness). Let $\Omega$ be a  bounded domain. Assume conditions \ref{162} and \ref{163} are satisfied. Also, assume that $\beta \geq 0$ and $n-\beta - 1+      {s^-} >0$.}

{\it Let $u\in W^{1,\pl } _{\loc} (\Omega ; |x|^{-2\beta}) \cap W^{s(x,y),p(x,y)} (\Omega ; |x|^{-\beta})$ be a weak solution of \eqref{4}, satisfying $\tl (u ; x_0, r, \rho) <\infty $ for all balls  $B_r (x_0) \subset B_\rho (x_0) \subset \Omega$. Then there exists a  constant $C=C(s^{+}, p^{-}, \pl , \beta, \Lambda, n)>0$, such that}
\begin{equation}\label{85}
\underset{B_{r/2}(x_{0})}{\esssup } \, u \leq \delta (|x_0|+1)^{\frac{\beta}{\pl-1}} r^{\frac{\pl}{\pl-1}} \tl \left(u_+ ; x_{0}, \frac{r}{2} ,r \right)^{\frac{1}{\pl -1}} +C\delta^{-\frac{(\pl - 1)\kappa}{\pl(\kappa -1)}} \left(\fint_{B_{r}(x_{0})} u_+^{\pl} \, \dmvbd \right)^{\frac{1}{\pl}} +1,
\end{equation}
{\it whenever $B_{r}(x_{0}) \subset \Omega$ with $r \in(0,1/2]$ and $\delta \in(0,1]$. Here,        {\(\kappa = n/(n - \pl)\) if \(1 < \pl < n\), and \(\kappa = 2\) if \(\pl \geq n\).}}

\medskip




\begin{proof} Let $B_{r}(x_{0}) \subset \Omega$ with $r \in(0,1/2]$. For $j=0,1,2, \ldots$, we define
$$
r_j := \frac{r}{2}(1 + 2^{-j}), \quad \tilde{r}_j := \frac{r_j + r_{j+1}}{2}, \quad B_j := B_{r_j}(x_0), \quad \tilde{B}_j := B_{\tilde{r}_j}(x_0).
$$

Let $(\psi_{j})_{j=0}^{\infty} \subset C_{c}^{\infty}(\tilde{B}_{j})$ be a sequence of cutoff functions such that,  for all $j=0,1,2, \ldots$,
$$
0 \leq \psi_{j} \leq 1 \text { in } \tilde{B}_{j},  \quad \psi_{j}=1 \text { in } B_{j+1}, \quad \text {  and } \quad |\nabla \psi_{j}| \leq \frac{C_1(n)2^{j+3}}{r},  
$$
 where $C_1> 1 $.

 For $j=0,1,2, \ldots$ and $k, \tilde{k} \geq 0$, we define 
$$
k_j := k + (1 - 2^{-j}) \tilde{k}, \quad \tilde{k}_j := \frac{k_j + k_{j+1}}{2}, \quad w_j := (u - k_j)_{+}, \quad \tilde{w}_j := (u - \tilde{k}_j)_{+}.
$$

From Lemma \ref{180} \ref{66}, 
\begin{equation}\label{83} 
\begin{aligned}
 \left(\frac{\tilde{k}}{2^{j+2}} \right)^{\frac{\pl (\kappa-1)}{\kappa}} \left(  \fint _{B_{j+1}}  w_{j+1}^{\pl} \, \dmvbd \right)^{\frac{1}{\kappa}}  = & \, \displaystyle  (k_{j+1}-\tilde{k}_{j})^{\frac{\pl(\kappa -1)}{\kappa}} \left(\fint _{B_{j+1}}  w_{j+1}^{\pl} \, \dmvbd \right)^{\frac{1}{\kappa}} \\[5pt]
 \leq & \, \displaystyle  \left[\left(\frac{3}{2} \right)^n C_2(\beta , n)\right]^{\frac{1}{ \kappa}}\left(\fint _{\tilde{B}_{j}}|\tilde{w}_{j} \psi_{j}|^{\pl  \kappa} \, \dmvbd \right)^{\frac{1}{\kappa}} ,
\end{aligned}
\end{equation}
where $\kappa$ is given by \eqref{69}.

By the Sobolev inequality in Corollary \ref{70}, with $C_3=C_3(\beta , \pl , n)>0$, we obtain
\begin{equation} \label{71}
\begin{aligned}
\left(\fint_{ \tilde B_{j}} | \tilde{w}_{j} \psi_{j} | ^{\pl \kappa} \, \dmvbd \right)^{\frac{1}{\kappa}}  \leq & \, \displaystyle  C_3 \frac{\mvbd(B_j)}{\mvbd (\tilde B _j)} \tilde r_j^{\pl} \fint_{B_{j}} \left|  \nabla \left(   \tilde{w}_{j} \psi_{j}  \right) \right|^{\pl} \, \dmvbd  \\
 \leq & \, \displaystyle  C_4(\beta , \pl , n) \left( \underbrace{ r^{\pl} \fint_{B_{j}} \tilde{w}_{j}^{\pl}|\nabla \psi_{j}|^{\pl} \, \dmvbd}_{I_1}+\underbrace{r^{\pl} \fint_{B_{j}} \psi_{j}^{\pl}|\nabla \tilde{w}_{j}|^{\pl} \, \dmvbd}_{I_2} \right),
\end{aligned}
\end{equation}
since, by  Lemma \ref{180} \ref{66}, 
$$
\frac{r_j  }{\tilde r_j}= \frac{2^{j+2} +4}{2^{j+2}  +3}\leq \frac{4}{3}, \quad \tilde r_j \leq r, \quad {\text { and } \quad \tilde B_{j} \subset B_{j}.}
$$ 

\medskip

{\bf Estimate of $I_{1}$:} Using the properties of $\psi_{j}$, we have
\begin{equation}\label{72}
\begin{aligned}
I_{1}= &\, \displaystyle  r^{\pl}\fint_{B_{j}} \tilde{w}_{j}^{\pl}|\nabla \psi_{j}|^{\pl} \, \dmvbd \\[5pt]
\leq  &\, \displaystyle   2^{3\pl}C_1^{\pl}  2^{j \pl} \fint _{B_{j}} w_{j}^{\pl} \, \dmvbd .
\end{aligned}
\end{equation}

\medskip

{\bf Estimate of $I_{2}$:} By Lemma \ref{73}, we obtain
\begin{equation}\label{80}
\begin{aligned}
 I_{2}= & \, \displaystyle  r^{\pl} \fint_{B_{j}} \psi_{j}^{\pl}|\nabla \tilde{w}_{j}|^{\pl} \, \dmvbd \\[5pt]
 \leq & \, \displaystyle  C_{5}(\pl) r^{\pl}\left( \fint_{B_{j}} \tilde{w}_{j}^{\pl}|\nabla \psi_{j}|^{\pl} \, \dmvbd \right.\\[5pt]
& \, \displaystyle  +\frac{1}{\mvbd(B_j)}\int_{B_{j}} \int_{B_{j}} \max \{\tilde{w}_{j}(x), \tilde{w}_{j}(y)\}^{p(x,y)}|\psi_{j}(x)-\psi_{j}(y)|^{p(x,y)} \, \dxu  (x,y) \\[5pt]
& \, \displaystyle   \left. + \frac{1}{\mvbd(B_j)} \int_{\Omega \setminus B_{j} } \int _{B_{j}} \tilde w(y)^{p(x, y)-1} \tilde w(x) \psi _j(x) ^{\pl } \, \dxu (x,y) \right) \\[7pt]
 = & \, \displaystyle  J_{1}+J_{2}+J_{3}
\end{aligned}
\end{equation}

\medskip

{\bf Estimates of $J_{1}$ and $J_{2}$:}  To estimate $J_{2}$, proceeding similarly to estimate (4.5) in \cite[p. 1292]{di2016localfract}, we obtain
\begin{equation*}
\begin{aligned}
J_2\leq & \, \displaystyle  \frac{C_5 r^{\pl}\Lambda}{\mvbd(B_j)}\int_{B_{j}} \int_{B_{j}} (\tilde{w}_{j}(x) + \tilde{w}_{j}(y) )^{p(x,y)} \frac{|\psi_{j}(x)-\psi_{j}(y)|^{p(x,y)} }{|x-y|^{n+s(x,y)p(x,y)} |x|^\beta  |y|^\beta} \,  \dxe \dye\\[5pt]
 \leq & \, \displaystyle  \frac{ 2^{\pl} C_5 r^{\pl}\Lambda}{\mvbd (B_j)}  \int_{B_{j}} \int_{B_{j} \cap \{u>\tilde k_j\}} \tilde{w}_{j}(x) ^{p(x,y)} \frac{|\psi_{j}(x)-\psi_{j}(y)|^{p(x,y)} }{|x-y|^{n+s(x,y)p(x,y)} |x|^\beta |y|^\beta} \,  \dxe \dye\\[7pt]
 \leq & \, \displaystyle  \frac{ 2^{{4\pl}} C_5C_1^{\pl} 2^{j\pl}\Lambda}{\mvbd (B_j)}\int_{B_{j}} \int_{B_{j}} \left[ w _{j}(x) ^{\pl} +\left( 2^{j+2}\frac{w_j(x)}{\tilde k}\right)^{\pl}\right]\frac{1}{|x-y|^{n-(1-s(x,y))p(x,y)} |x|^\beta |y|^\beta} \, \dxe \dye .
\end{aligned}
\end{equation*}
{This follows because}
$$
\frac{w_j}{\tilde k_j - k_j}\geq 1 \quad \text { in } \quad B_j \cap \{u>\tilde k_j\},
$$
and we use the estimates 
$$
|\psi _j(x) - \psi _j (y)|\leq \left(\sup _{\tilde B_j} |\nabla \psi _j| \right)|x-y|, \quad \tilde k_j - k_j= \frac{\tilde k}{2^{j+2}},
$$
 as well as the inequality $a^{p(x,y)}\leq a^{\pl} +1$ for all $a\geq 0$ and $  (x,y)\in \Omega \times \Omega$.

If $x_0\notin B_{3r}(0)$, then  $0\notin \overline{B_r(x_0)}$, and
\begin{equation}\label{78}
\begin{aligned}
J_2 \leq  & \, \displaystyle   C_6 (\pl , \Lambda)\frac{2^{2j\pl}}{\min \{\tilde k ^{\pl} , 1\} (|x_0|-r)^{\beta}} \fint_{B_{j}} w _{j}(x) ^{\pl} |x|^\beta \left[ \int_{B_{2r_j} (x)} \frac{1}{|x-y|^{n-(1-s^+)p^-} } \,  \dye \right] \, \dmvbd (x)\\[5pt]
 \leq & \, \displaystyle  C_7 (s^+ , p^- ,\pl , \Lambda ,n )\frac{2^{2j\pl} }{\min \{\tilde k ^{\pl} , 1\} } \left(\frac{|x_0|+r}{|x_0|-r} \right)^{\beta} r _j ^{(1-s^+)p^-} \fint_{B_{j}} w _{j} ^{\pl} \, \dmvbd \\[7pt]
 \leq & \, \displaystyle  C_7 \frac{2^{2j\pl} 2^\beta}{\min \{\tilde k ^{\pl} , 1\} } r_j^{(1-s^+)p^-} \fint_{B_{j}} w _{j} ^{\pl} \, \dmvbd .
\end{aligned}
\end{equation}

If $x_0\in B_{3r}(0)$, then from $|x-y|<2r_j$ in $B_j$,  we have 
\begin{equation} \label{76}
J_2 \leq  C_6 (\pl , \Lambda)\frac{2^{2j\pl} (2r_j)^{(1-s^+)(p^- -1)}}{\min \{\tilde k ^{\pl} , 1\} } \fint_{B_{j}} w _{j}(x) ^{\pl} |x|^\beta  \left[ \int_{B_{2r_j} (x)} \frac{1}{|x-y|^{n-(1-s^+)} |y|^\beta} \,  \dye \right] \, \dmvbd (x).
\end{equation}
If $x \in B_j \setminus \{0\}$,  then by Lemma \ref{27} \ref{23},       {choosing
$$
R=2|x|>0, \qquad \alpha = 1- s^+, \qquad \tilde \beta = \beta,
$$
we obtain}
\begin{equation} \label{74}
\begin{aligned}
\int_{B_{2r_j} (x)} & \displaystyle \frac{1}{|x-y|^{n-(1-s^+)} |y|^\beta} \,  \dye  \\[5pt]
      {\leq} & \, \displaystyle  \int_{B_{2r_j} (x) \setminus B_{2|x|} (0)} \frac{1}{|x-y|^{n-(1-s^+)} |y|^\beta} \,  \dye + \int_{B_{2|x|} (0)} \frac{1}{|x-y|^{n-(1-s^+)} |y|^\beta} \,  \dye\\[5pt]
\leq & \, \displaystyle   C_8(s^+, \beta , n )\frac{1}{|x|^\beta} \left( r_j^{1-s^+} + |x|^{1-s^+}\right) \\[7pt]
\leq & \, \displaystyle  C_9(s^+, \beta , n ) \frac{1}{|x|^\beta}       {r} ^{1-s^+}.
\end{aligned}
\end{equation}

From \eqref{76} and      {\eqref{74}}, we obtain 
\begin{equation} \label{77}
J_2 \leq  C_{     {10}} \frac{2^{2j\pl} }{\min \{\tilde k ^{\pl}, 1\} } \fint_{B_{j}} w _{j} ^{\pl}  \, \dmvbd , \quad \text { if } \quad x_0\in B_{3r} (0),
\end{equation}
 where $C_{     {10}}:= C_{     {10}}(s^+ , p^- , \pl  , \beta , \Lambda , n)>0$.

To estimate $J_{1}$, we use the estimate for $I_{1}$ in \eqref{72}. Considering  \eqref{78} and \eqref{77}, we get
\begin{equation}\label{79}
J_i\leq C_{     {11}} \frac{2^{2j\pl} }{\min \{\tilde k ^{\pl}, 1\} } \fint_{B_{j}} w _{j} ^{\pl} \, \dmvbd  ,
\end{equation}
for $i=1,2$, where $C_{     {11}}:= C_{     {11}}(s^+ , p^- , \pl  , \beta , \Lambda , n)>0$.

\medskip


{\bf Estimate of $J_{3}$:} We observe that $w_{j}(x)^{\pl} \geq(\tilde{k}_{j}-k_{j})^{\pl-1} \tilde{w}_{j}(x)$, and
$$
\frac{|y-x_{0}|}{|x-y|} \leq \frac{|y-x|+|x-x_{0}|}{|x-y|} \leq 1+\frac{\tilde{r}_{j}}{r_{j}-\tilde{r}_{j}} \leq 2^{j+4},
$$
for $y \in \mathbb{R}^{n} \setminus B_{j}$ and $x \in \tilde{B}_{j} \supset \spt \psi _j$.

 Let $\delta \in (0,1]$. We have
\begin{align}
&J_{3} \leq  \frac{C_5 \Lambda r^{\pl}}{\mvbd(B_j)} \int_{\Omega \setminus B_j } \int _{B_j} \tilde w _j (y)^{p(x, y)-1} \tilde w _j(x) \psi _j(x) ^{\pl } \frac{1}{|x-y|^{n+s(x,y)p(x,y)} |x|^{\beta} |y|^\beta} \, \dxe \dye  \nonumber \\[5pt]
& \, \displaystyle \quad \ \leq C_{     {12}}  2^{j(n+\pl s^+)} r^{\pl} \fint_{B_{j}} |x|^\beta \tilde{w}_{j} \dmvbd  \cdot \underset{x \in B_{ r_j}(x_0)}{\operatorname{ess} \sup }      {\Bigg\{} \int_{\Omega \setminus B_{r_j}(x_0)} \frac{\tilde{w}_{j}(y)^{p(x,y)-1}}{|x_0-y|^{n+s(x,y)p(x,y)}  |y|^\beta} \, \dye      {\Bigg\}} \nonumber \\[5pt]
& \, \displaystyle \quad \  \leq C_{     {12}}(|x_0| +r_j)^\beta 2^{j(n+\pl s^+)} r^{\pl} \tl \left( w_{0} ; x_{0}, \frac{r}{2} , r\right) \fint_{B_{j}} \frac{w_{j}^{\pl}}{(\tilde{k}_{j}-k_{j})^{\pl-1}} \, \dmvbd  \nonumber\\[5pt]
& \, \displaystyle \quad \  \leq C_{     {13}}(|x_0| +1)^\beta\frac{2^{j(n+\pl s^+ + \pl-1)}}{\tilde{k}^{\pl-1}} r^{\pl} \tl \left( w_{0} ; x_{0}, \frac{r}{2} , r\right) \fint_{B_{j}} w_{j}^{\pl} \, \dmvbd \nonumber\\[7pt]
& \, \displaystyle  \quad \ \leq C_{     {13}} 2^{j(n+\pl s^+ +\pl -1)} \delta^{1-\pl} \fint _{B_{j}} w_{j}^{\pl} \dmvbd, \label{81}
\end{align}
where $C_{i}= C_i(s^{+}, \pl ,  \Lambda , n)>0$ for $i=     {12,13}$, and 
$$
\tilde{k} \geq \delta (|x_0|+1)^{\frac{\beta}{\pl-1}} r^{\frac{\pl}{\pl-1}} \tl \left(w_{0} ; x_{0}, \frac{r}{2} ,r \right)^{\frac{1}{\pl -1}} +1.
$$

By applying \eqref{79} and \eqref{81} to \eqref{80}, we obtain
\begin{equation}\label{82}
I_2\leq C_{     {14}}  2^{2j(n+\pl s^+ + \pl-1)}    \delta^{1-\pl} \fint_{B_{j}} w _{j} ^{\pl} \, \dmvbd 
\end{equation}
 where $C_{     {14}}:= C_{     {14}}(s^+ , p^- , \pl  , \beta , \Lambda , n)>1$ and $\delta \in (0,1]$.
 
 \medskip
 
 {\bf We now proceed to complete the proof of the theorem.}

\medskip
 
  By inserting \eqref{83}, \eqref{72}, and \eqref{82} into \eqref{71}, we have
\begin{equation*}
\frac{1}{\tilde k}\left(\fint_{B_{j+1}}w_{j+1}^{\pl} \, \dmvbd \right)^{\frac{1}{\pl}} \leq C_{     {15}}2^{j\frac{2(n+\pl s^++\pl-1)}{\pl}\kappa} \delta^{\frac{1-\pl}{\pl}\kappa} \frac{1}{\tilde k ^{\kappa}} \left(  \fint_{B_{j}} w_{j}^{\pl} \, \dmvbd \right)^{\frac{\kappa}{\pl}} ,
\end{equation*}
where $C_{     {15}}=C_{     {15}}(s^+ , p^- , \pl  , \beta , \Lambda , n) >1$.

Set
$$
Y_{j}:=\left(\fint_{B_{j}} w_{j}^{\pl} \, \dmvbd\right)^{\frac{1}{\pl}}
$$
and
$$
\tilde{k}=\delta (|x_0|+1)^{\frac{\beta}{\pl-1}} r^{\frac{\pl}{\pl-1}}\tl \left(w_{0} ; x_{0}, \frac{r}{2} ,r \right)^{\frac{1}{\pl -1}} +c_{0}^{\frac{1}{\gamma}} b^{\frac{1}{\gamma^{2}}}\left(\fint_{B_{r}(x_{0})} w_{0}^{\pl} \, \dmvbd \right)^{\frac{1}{\pl}} +1,
$$
where
$$
c_{0}=C_{     {15}}\delta^{\frac{1-\pl}{\pl} \kappa}, \quad b=2^{\left[\frac{2(n+\pl s^++\pl-1)}{\pl}+\frac{\kappa-1}{\kappa}\right] \kappa} \quad \text { and } \quad \gamma=\kappa-1 .
$$

Hence,
\begin{equation*}
\frac{Y_{j+1}}{\tilde{k}} \leq c_0 b^j \left(\frac{Y_{j}}{\tilde{k}} \right)^{\kappa} .
\end{equation*}

Moreover, from the definition of $\tilde{k}$,  we have
$$
\frac{Y_{0}}{\tilde{k}} \leq c_{0}^{-\frac{1}{\gamma}} b^{-\frac{1}{\gamma^{2}}} .
$$

Thus, by Lemma \ref{84}, we obtain $Y_{j} \rightarrow 0$ as $j \rightarrow \infty$, which implies
$$
\underset{B_{r/2}(x_{0})}{\esssup } \, u \leq k + \tilde k,
$$
which gives \eqref{85} by choosing $k=0$.\end{proof}

\section{Local Hölder continuity} \label{187}

The following local Hölder continuity result (as stated in the introduction) for weak solutions of \eqref{4} follows from Lemma \ref{183} below.


\medskip

\noindent {\bf Theorem \ref{157}.}  {\it (Local Hölder continuity). Let $\Omega$ be a  bounded domain. Assume that conditions \ref{162}, \ref{163}, and {property \ref{144}} are satisfied. Also, suppose that
$$
\beta \geq 0,  \quad n-2\beta - 1+      {s^-} >0,  \quad n-\beta>s^+\pl, \quad \text { and } \quad (1-s^+)(p^--1)\geq \frac{\pl}{p^-}(\pl- p^-).
$$}

{\it Let $u\in W^{1,\pl } _{\loc} (\Omega ; |x|^{-2\beta}) \cap W^{s(x,y),p(x,y)} (\Omega ; |x|^{-\beta})$ be a weak solution of \eqref{4}, satisfying $\tl (u ; x_0, r, \rho) <\infty $ for all balls  $B_r (x_0) \subset B_\rho (x_0) \subset \Omega$.   Then $u$ is locally Hölder continuous in $\Omega$.}

{\it Moreover, if $B_R(x_0) \subset \Omega$    with $R \in (0, 1/2]$ is a ball such that either  $0 \notin \overline{B_{3R}(x_0)}$ or $x_0 = 0$, then there exist constants $\alpha \in ( 0, \frac{1-s^+}{\pl-1} )$ and $c>0$, depending on ${\sup _{y \in \Omega} |y|}$, $R$, $s^-$, $s^{+}$, $p^{-}$, $\pl$,  $\Lambda$, $\beta$, and $ n$, such that}
\begin{equation*}
\begin{aligned}
\underset{B_{\rho}(x_{0})}{\osc} \, u := & \, \displaystyle \underset{B_{\rho}(x_{0})}{\esssup } \, u-\underset{B_{\rho}(x_{0})}{\essinf } \, u \\[5pt]
\leq & \, \displaystyle c\left(\frac{\rho}{r}\right)^{\alpha}   \left[ r^{\frac{\pl}{\pl - 1}}\tl\left( u ; x_{0}, \frac{r}{2} , r \right) ^{\frac{1}{\pl -1}}+\left(\fint _{B_{ r}(x_{0})}|u|^{\pl} \, \dmvbd \right)^{\frac{1}{\pl}} +1 \right],
\end{aligned}
\end{equation*}
{\it where $B_{2r}(x_0) \subset B_R(x_0)$, $r \in (0, 1/4]$, and $\rho \in (0, r/2]$.}

{\it If $\beta = 0$, the constants $\alpha$ and $c$ depend only on  $s^-$, $s^+$, $p^-$, $\pl$, $\Lambda$, and $n$. In the case $x_0 = 0$, the constants $\alpha$ and $c$ depend only on  $s^-$, $s^+$, $p^-$, $\pl$, $\Lambda$, $\beta$, and $n$.}

\medskip


We prove the next result by arguing similarly to the proof of  \cite[Theorem 5.1]{zbMATH07576867} and \cite[Lemma 5.1]{di2016localfract}.

Choosing $\delta (|x_0|+1)^{\frac{\beta}{\pl -1}} =1$ in Theorem \ref{129}, we obtain
\begin{equation}\label{111}
\underset{B_{r/2}(x_0)}{\esssup} \ |u| \leq r^{\frac{\pl}{\pl -1}}\tl \left(u ; x_{0}, \frac{r}{2} , r\right)^{\frac{1}{\pl -1}}+
C\left(\fint _{B_{r}(x_{0})}|u|^{\pl} \, \dmvbd \right)^{\frac{1}{\pl}} +1,
\end{equation}
and
\begin{equation}\label{112} 
\underset{B_{r/2}(x_{0})}{\osc} \ u  \leq 2r^{\frac{\pl}{\pl -1}}\tl \left(u ; x_{0}, \frac{r}{2} , r\right)^{\frac{1}{\pl -1}}+
2C\left(\fint _{B_{r}(x_{0})}|u|^{\pl} \, \dmvbd \right)^{\frac{1}{\pl}} +2,
\end{equation}
where $C=C(s^+ , p^- , \pl , \beta , \Lambda ,    {\sup _{y \in \Omega} |y|}, n)>0$.

\begin{lemma} \label{183}
Let $\Omega$ be a  bounded domain. Assume that conditions \ref{162}, \ref{163}, and {property \ref{144}} are satisfied. Also, suppose that
$$
\beta \geq 0, \quad n-2\beta - 1+       {s^-} >0,  \quad n-\beta>s^+\pl,  \quad \text { and } \quad (1-s^+)(p^--1)\geq \frac{\pl}{p^-}(\pl- p^-).
$$ 

Let $u\in W^{1,\pl } _{\loc} (\Omega ; |x|^{-2\beta}) \cap W^{s(x,y),p(x,y)} (\Omega ; |x|^{-\beta})$ be a weak solution of \eqref{4} satisfying $\tl (u ; x_0, r, \rho) <\infty $ for all balls  $B_r (x_0) \subset B_\rho (x_0) \subset \Omega$.  Suppose that   $B_R(x_0) \subset \Omega$   is a ball such that either $0 \notin \overline{B_{3R}(x_0)}$ or $x_0 =0$.

Then there exist  constants $\eta \in (0, 1/4]$ and $\alpha \in(0, \frac{1-s^+}{\pl-1})$,  depending only on ${\sup _{y \in \Omega} |y|}$, $R$, $s^-$, $s^+$, $p^-$, $\pl$, $\Lambda$, $\beta$, and $n$, such that the following holds:

Assume that $0 < 2r \leq R \leq 1/2$. Define
$$
r_j := \eta^j \frac{r}{2} \quad \text{and} \quad B_j := B_{r_j}(x_0),  \quad j = 0, 1, 2, \ldots 
$$

Set
\begin{equation}\label{136}
\frac{1}{2} \omega(r_{0}):=r^{\frac{\pl}{\pl -1}}\tl \left(u ; x_{0}, \frac{r}{2} , r\right)^{\frac{1}{\pl -1}}+C\left(\fint _{B_{r}(x_{0})}|u|^{\pl} \, \dmvbd \right)^{\frac{1}{\pl}} +1 ,
\end{equation}
where  $C$ is the constant from \eqref{112}.

For each $j = 1, 2, \ldots$, let
\begin{equation}\label{99}
\omega(r_{j}) :=\left(\frac{r_{j}}{r_{0}}\right)^{\alpha} \omega(r_{0}).
\end{equation}

Then, for all $j = 0, 1, 2, \ldots$,
\begin{equation} \label{98}
\underset{B_{j}}{\osc} \, u \leq \omega(r_{j}).
\end{equation}

If $\beta = 0$, the constants $\eta$ and $\alpha$ depend only on  $s^-$, $s^+$, $p^-$, $\pl$, $\Lambda$, and $n$. In the case $x_0 = 0$, the constants $\eta$ and $\alpha$ depend only on  $s^-$, $s^+$, $p^-$, $\pl$, $\Lambda$, $\beta$, and $n$.
\end{lemma}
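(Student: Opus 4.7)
\textbf{Proof plan for Lemma \ref{183}.} The plan is to establish \eqref{98} by induction on $j$, following the oscillation-decay strategy of Di Castro--Kuusi--Palatucci \cite{di2016localfract} and Garain--Kinnunen \cite{zbMATH07576867}, adapted to the present mixed local--nonlocal weighted variable-exponent framework. The base case $j = 0$ is immediate: the local boundedness estimate \eqref{112} is precisely the content of definition \eqref{136} of $\omega(r_0)$, and gives $\osc_{B_0} u \leq \omega(r_0)$.

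For the inductive step, assume \eqref{98} holds for $i = 0, 1, \ldots, j$ and set $M_i := \esssup_{B_i} u$, $m_i := \essinf_{B_i} u$. Exploiting the invariance of \eqref{4} under $u \mapsto -u$ and under additive constants (noted after Definition \ref{166}, together with Lemmas \ref{164}--\ref{165}), we may assume without loss of generality that $\mvbd\bigl(\{u \leq (M_j + m_j)/2\} \cap B_{j+1}\bigr) \geq \tfrac{1}{2}\mvbd(B_{j+1})$. Then $\tilde u := M_j - u$ is a nonnegative weak supersolution of \eqref{4} on $B_j$ satisfying $\tilde u \geq \omega(r_j)/2$ on a subset of $B_{j+1}$ of relative $\mvbd$-measure at least $1/2$. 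The first core step is to apply Corollary \ref{h65} to $\tilde u$ on a ball intermediate between $B_j$ and $B_{j+1}$, with $a = \omega(r_j)/2$, $b = 2^{\ell}$, and suitably chosen $d = d(\ell, \omega(r_j))$. The truncated logarithm $v$ vanishes on the ``good set'' $\{\tilde u \geq \omega(r_j)/2\}$ and equals $\ell \log 2$ on the ``bad set'' $\{\tilde u \leq c\,\omega(r_j)/2^{\ell}\}$; combining \eqref{h63}--\eqref{h64} with Chebyshev's inequality then yields
\[
\mvbd\!\left(\{\tilde u \leq c\,\omega(r_j)/2^{\ell}\} \cap B_{j+1}\right) \;\leq\; \frac{C_{\star}}{\ell^{\,\pl - 1}}\,\mvbd(B_{j+1}), \qquad \ell \geq 1,
\]
with $C_\star$ independent of $j$, provided the tail terms in \eqref{h63}--\eqref{h64} can be majorized by a uniform multiple of $\omega(r_j)^{\pl - 1}$.

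The second core step is a De Giorgi iteration: applying the energy estimate of Lemma \ref{73} to the supersolution $\tilde u$ on a dyadic sequence of shrinking balls between $B_{j+1}$ and a ball halfway to $B_j$, together with the Sobolev embedding of Corollary \ref{70} and the iteration Lemma \ref{84}, produces a threshold $\nu_0 > 0$ depending only on the structural constants such that whenever the bad set above has relative measure at most $\nu_0$, one concludes $\tilde u \geq c'\,\omega(r_j)/2^{\ell}$ a.e.\ on $B_{j+1}$. Choosing $\ell$ large enough that $C_\star / \ell^{\pl - 1} \leq \nu_0$ gives
\[
\osc_{B_{j+1}} u \;\leq\; \omega(r_j) \,-\, c'\omega(r_j)/2^{\ell} \;=\; \bigl(1 - c' 2^{-\ell}\bigr)\,\omega(r_j),
\]
and the induction closes by fixing $\eta \in (0, 1/4]$ and $\alpha \in (0, (1 - s^+)/(\pl - 1))$ with $\eta^{\alpha} \geq 1 - c' 2^{-\ell}$.

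The main obstacle is \emph{uniform tail control across all scales}: both the logarithmic and the De Giorgi steps involve tail quantities $\tl(\tilde u_-; x_0, \cdot, \cdot)$ and the analogous nonlocal contribution in \eqref{10}, which must be bounded at every scale $r_{j+1}$ using only the inductive estimates on the nested balls $B_0 \supset B_1 \supset \cdots \supset B_j$. Decomposing
\[
\Omega \setminus B_{j+1} \;=\; \bigsqcup_{i=0}^{j}(B_i \setminus B_{i+1}) \;\sqcup\; (\Omega \setminus B_0),
\]
bounding $|\tilde u| \leq \omega(r_i) + \omega(r_0)$ on each annulus via \eqref{98}, and absorbing the outermost contribution into the initial tail $\tl(u; x_0, r/2, r)$ already encoded in $\omega(r_0)$, produces a geometric sum of ratio $\eta^{(1 - s^+)\pl - \alpha(\pl - 1)}$; its convergence dictates precisely $\alpha < (1 - s^+)/(\pl - 1)$, which is the admissible range asserted in the statement. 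The auxiliary hypothesis $(1 - s^+)(p^- - 1) \geq (\pl / p^-)(\pl - p^-)$ intervenes when replacing the variable-exponent power $p(x,y) - 1$ inside tail integrals by the constant $\pl - 1$, ensuring that the resulting factors $\omega(r_i)^{p^- - \pl} \leq 1$ do not overwhelm the geometric decay. In the special cases $x_0 = 0$ (one uses \eqref{h64} instead of \eqref{h63}) and $\beta = 0$ (the origin-dependent prefactors disappear), the constants $C_\star$ and $\nu_0$, and hence $\eta$ and $\alpha$, depend only on the reduced set of parameters listed in the statement.
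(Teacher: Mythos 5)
Your overall architecture coincides with the paper's: induction on $j$, a dichotomy reducing matters to a nonnegative supersolution $u_j$ that is large on half of a smaller ball, a logarithmic estimate (Corollary \ref{h65}) plus a Chebyshev-type argument to bound the measure of the bad set, a De Giorgi iteration (Lemma \ref{73}, Corollary \ref{70}, Lemma \ref{84}) converting smallness of that measure into pointwise positivity, and tail control over the annuli $B_i\setminus B_{i+1}$ exactly as in \eqref{113}--\eqref{121}. Two points, however, do not work as written. A minor one: normalizing at the midpoint $(M_j+m_j)/2$ only gives $\tilde u\ge (M_j-m_j)/2$ on the good set, and $M_j-m_j$ can be much smaller than $\omega(r_j)$, so your asserted bound $\tilde u\ge\omega(r_j)/2$ on half of $B_{j+1}$ does not follow; the paper splits at the level $\essinf_{B_j}u+\omega(r_j)/2$ (alternatives \eqref{100}--\eqref{101} and definition \eqref{140}) precisely to guarantee this.

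The more serious gap is the claimed measure decay $C_\star/\ell^{\pl-1}$ with $C_\star$ independent of $\ell$. To detect the level $c\,\omega(r_j)2^{-\ell}$ you must take $d\lesssim\omega(r_j)2^{-\ell}$ in the truncated logarithm, and then the term $c_1 d^{1-\pl}r^{\pl}\tl(u_-;\dots)$ in \eqref{h63}--\eqref{h64} is, even after the annuli bound $r_j^{\pl}\tl\lesssim\eta^{-\alpha(\pl-1)}\omega(r_j)^{\pl-1}+r_j^{(1-s^+)\pl}$ of \eqref{119}--\eqref{120}, of size $2^{\ell(\pl-1)}$ (and $\max\{1,d^{p^--\pl}\}$ contributes $2^{\ell(\pl-p^-)}$). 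Hence the right-hand side of the log estimate, and with it your $C_\star$, grows geometrically in $\ell$, so ``choose $\ell$ so large that $C_\star/\ell^{\pl-1}\le\nu_0$'' cannot be executed; the same $2^{\ell}$-factors would also enter the constants of your De Giorgi step, making $\nu_0$ itself depend on $\ell$, a circularity. The paper avoids introducing $\ell$ at all: it sets the positivity level $d=\epsilon\omega(r_j)$ with $\epsilon=\eta^{\pl/(\pl-1)-\alpha}$ coupled to $\eta$, so the critical products \eqref{122}, \eqref{124}, \eqref{138} are $\le1$ uniformly in $j$ (exactly where $(1-s^+)(p^--1)\ge\frac{\pl}{p^-}(\pl-p^-)$ and $\alpha\le\frac{1-s^+}{\pl-1}$ are used), accepts the weaker bad-set bound $\hat C/\ln(1/\eta)$ of \eqref{123} (via $\lambda\ge\tfrac12\ln(1/\epsilon)$ in \eqref{127}--\eqref{128}), and only afterwards chooses $\eta$ below the structural De Giorgi threshold and then $\alpha$ with $\eta^{\alpha}\ge1-\epsilon$. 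Your plan could be repaired by reversing the order of quantifiers (fix $\ell$ from a threshold established under the normalization that all such products are $\le1$, then take $\eta$ small depending on $\ell$ to absorb the $2^{\ell(\pl-1)}$-factors in both steps), but that is essentially the paper's coupling in disguise and must be made explicit. Finally, a small misattribution: the restriction $\alpha<\frac{1-s^+}{\pl-1}$ does not come from convergence of the geometric tail sum (which converges for every $\alpha<\frac{\pl}{\pl-1}$, see \eqref{118}), but from keeping \eqref{122}/\eqref{124} bounded by one and from the closing inequality $(1-\epsilon)\eta^{-\alpha}\le1$ in \eqref{141}.
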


\begin{proof}  

 By Lemmas \ref{164}  and  \ref{165}, $u_+$ and $(-u)_+$ are weak subsolutions of \eqref{4}.

 From \eqref{112}, it follows that \eqref{98} holds true for $j=0$. Suppose \eqref{98} holds for all $i=0, \ldots, j$ for some $j \in\{0,1,2, \ldots\}$. To prove \eqref{98} for $j+1$, we proceed by induction.


We observe that either
\begin{equation}\label{100}
\frac{ \mvbd \left( B_{2 r_{j+1}}(x_{0}) \cap \left\{u \geq \underset{B_{j}}{\essinf} \,  u  + \frac{\omega(r_{j})}{2}\right\} \right)}{ \mvbd (  B_{2 r_{j+1}}(x_{0})  ) } \geq \frac{1}{2} ,
\end{equation}
or
\begin{equation}\label{101}
\frac{ \mvbd \left( B_{2 r_{j+1}}(x_{0}) \cap \left\{u \leq \underset{B_{j}}{\essinf} \,  u +\frac{\omega(r_{j})}{2}\right\} \right)}{ \mvbd (  B_{2 r_{j+1}}(x_{0})  ) } \geq \frac{1}{2} 
\end{equation}
holds. Define
\begin{equation}\label{140}
u_{j} :=\left\{\begin{aligned}
&   u-\underset{B_{j}}{\essinf} \, u &   &    \text { if \eqref{100} holds},  \\
&   \omega(r_{j})-\left(u-\underset{B_{j}}{\essinf} \,  u \right) &   &   \text { if \eqref{101} holds}.
\end{aligned} \right.
\end{equation}

Then $u_{j}$ is a weak solution of \eqref{4}. Also, by the induction hypothesis in both cases, $u_{j} \geq 0$ in $B_{j}$, and
\begin{equation}\label{107}
\underset{B_{j}}{\esssup} \, |u_j| \leq 2\omega (r_i), \quad \forall i\in \{0, \ldots, j\}.
\end{equation}
Furthermore, from \eqref{111}, we obtain
\begin{equation}\label{102}
\begin{aligned}
 |u_j|\leq  & \, \displaystyle |u| + \omega (r_j) + \underset{B_{j}}{\esssup} \, |u| \\[5pt]
 \leq & \, \displaystyle |u| + \omega (r_j) + \frac{1}{2}\omega (r_0) \quad \text { in  } \quad \Omega .
\end{aligned} 
\end{equation}
We also have
\begin{equation}\label{103}
\frac{\mvbd \left( B_{2 r_{j+1}}(x_{0}) \cap \left\{u_{j} \geq \frac{\omega(r_{j})}{2}\right\} \right)}{ \mvbd ( B_{2 r_{j+1}}(x_{0}))} \geq \frac{1}{2} . 
\end{equation}

Set 
$$
\epsilon := \eta ^{\frac{\pl}{\pl -1} -\alpha}.
$$

Then the following estimate holds:
\begin{equation}\label{122}
\begin{aligned}
  \left[\epsilon \omega (r_j)\right]^{p^- - \pl} r_{j+1} ^{(1-s^+)\pl}= & \, \displaystyle \left[\eta ^{\frac{\pl}{\pl-1} -\alpha} \left(\frac{r_j}{r_0} \right)^\alpha \omega (r_0)\right]^{p^- - \pl} r_{j+1} ^{(1-s^+)\pl}\\[5pt]
 \leq  & \, \displaystyle \eta ^{j[-\alpha (\pl - p^-) +(1-s^+)\pl]} \eta^{-\frac{(\pl -p^-)\pl}{\pl -1} + \alpha (\pl -p^-) + (1-s^+)\pl} \left( \frac{r}{2}\right)^{(1-s^+)\pl}\\[5pt]
 \leq & \, \displaystyle 2^{-(1-s^+)\pl} \\[7pt]
 \leq & \, \displaystyle 1,
\end{aligned}
\end{equation}
provided that  $(1-s^+)(\pl -1)\geq \pl - p^-$ and $(1-s^+)\pl \geq \alpha (\pl - p^-)$.

In the case where $\frac{(1-s^+)\pl}{\pl -1}\geq \alpha $, we observe that
\begin{equation}\label{124}
\begin{aligned}
  \left[\epsilon \omega (r_j)\right]^{1 - \pl} & \displaystyle \left( \frac{r_{j+1}}{r_j} \right)^{\pl}r_j ^{(1-s^+)\pl}\\[5pt]
  = & \, \displaystyle \left[\eta ^{\frac{\pl}{\pl-1} -\alpha} \left(\frac{r_j}{r_0} \right)^\alpha \omega (r_0)\right]^{1 - \pl} \left( \frac{r_{j+1}}{r_j} \right)^{\pl}r_j ^{(1-s^+)\pl}\\[5pt]
  \leq  & \, \displaystyle \eta ^{j[-\alpha (\pl - 1) +(1-s^+)\pl]} \eta^{-\pl + \alpha (\pl -1) + \pl} \\[7pt]
  \leq & \, \displaystyle 1.
\end{aligned}
\end{equation}

If $(1-s^{+})(\pl-1) \geq\frac{\pl}{p^-} (\pl-p^{-})$ and $(1-s^{+}) p^- \geq \alpha(\pl -p^{-})$, then we also have:
\begin{equation}\label{138}
\begin{aligned}
 \left[\epsilon \omega (r_j)\right]^{p^- - \pl} r_{j+1} ^{(1-s^+)p^-}=& \, \displaystyle \left[\eta ^{\frac{\pl}{\pl-1} -\alpha} \left(\frac{r_j}{r_0} \right)^\alpha \omega (r_0)\right]^{p^- - \pl} r_{j+1} ^{(1-s^+)p^-}\\[5pt]
 \leq  & \, \displaystyle \eta ^{j[-\alpha (\pl - p^-) +(1-s^+)p^-]} \eta^{-\frac{(\pl -p^-)\pl}{\pl -1} + \alpha (\pl -p^-) + (1-s^+)p^-} \left( \frac{r}{2}\right)^{(1-s^+)p^-}\\
 \leq & \, \displaystyle 2^{-(1-s^+)\pl} \\[7pt]
 \leq & \, \displaystyle 1.
\end{aligned}
\end{equation}

\medskip


{\bf Step 1.} Let $\epsilon=\eta^{\frac{\pl}{\pl-1}-\alpha}$, {where}
$$
\frac{1-s^+}{\pl -1}\geq \alpha>0 , \quad  (1-s^+)(p^- -1 )\geq \frac{\pl}{p^-}(\pl - p^-), \quad \text { and } \quad {\eta \leq \frac{1}{36}}.
$$ 
We claim that
\begin{equation}\label{123}
\frac{\mvbd\left(B_{2 r_{j+1}}(x_{0}) \cap\left\{u_{j} \leq 2 \epsilon \omega(r_{j})\right\}\right)}{ \mvbd \left( B_{2 r_{j+1}}(x_{0})\right)} \leq \frac{\hat{C}}{\ln \frac{1}{\eta}}, 
\end{equation}
 for some constant $\hat{C}=\hat C ({\sup _{y \in \Omega} |y|},   R, s^{-}, s^{+}, p^{-}, \pl , \beta, \Lambda, n)>0$. Furthermore, if $\beta = 0$, the constant $\hat{C}$ depends only on  $s^-$, $s^+$, $p^-$, $\pl$, $\Lambda$, and $n$. In the case $x_0 = 0$, the constant $\hat{C}$ depends only on  $s^-$, $s^+$, $p^-$, $\pl$, $\Lambda$, $\beta$, and $n$.
 
 \medskip
 

We set
\begin{equation} \label{127}
\lambda :=\ln \left(\frac{\frac{\omega(r_{j})}{2}+\epsilon \omega(r_{j})}{3 \epsilon \omega(r_{j})} \right)=\ln \left(\frac{\frac{1}{2}+\epsilon}{3 \epsilon}\right) \geq \frac{1}{2}\ln \frac{1}{\epsilon}
\end{equation}
and define
\begin{equation*}
\Theta :=\min \left\{\left(\ln \left(\frac{\frac{\omega(r_{j})}{2}+\epsilon \omega(r_{j})}{u_{j}+\epsilon \omega(r_{j})}\right)\right)_{+}, \lambda \right\} .
\end{equation*}

By \eqref{103}, we have
\begin{equation}\label{104}
\begin{aligned}
\lambda  =& \, \displaystyle \frac{1}{ \mvbd \left( B_{2 r_{j+1}}(x_{0}) \cap \left\{u_{j} \geq \frac{\omega(r_{j})}{2}\right\}\right)} \int_{B_{2 r_{j+1}}(x_{0}) \cap\{u_{j} \geq \frac{\omega(r_{j})}{2}\}} \lambda \, \dmvbd \\[5pt]
 = & \, \displaystyle  \frac{1}{\mvbd \left( B_{2 r_{j+1}}(x_{0}) \cap \left\{u_{j} \geq \frac{\omega(r_{j})}{2}\right\}\right)} \int_{B_{2 r_{j+1}}(x_{0}) \cap\{\Theta=0\}} \lambda \, \dmvbd  \\[5pt]
 \leq & \, \displaystyle  \frac{2}{\mvbd \left( B_{2 r_{j+1}}(x_{0})\right)} \int_{B_{2 r_{j+1}}(x_{0})}(\lambda-\Theta) \, \dmvbd \\[7pt]
 = & \, \displaystyle 2 \left(  \lambda-(\Theta)_{B_{2 r_{j+1}}(x_{0}) , \mvbd} \right),
\end{aligned}
\end{equation}
where 
$$
(\Theta)_{B _{2 r_{j+1}}(x_{0}), \mvbd}=\fint_{B_{2 r_{j+1}(x_{0})}} \Theta \, \dmvbd.
$$ 

Integrating \eqref{104} over the set $B_{2 r_{j+1}}(x_{0}) \cap {\{ \Theta=\lambda\}}$, we get
\begin{equation}\label{128}
\frac{\mvbd \left( B_{2 r_{j+1}}(x_{0}) \cap\{\Theta=\lambda \}\right)}{\mvbd \left( B_{2 r_{j+1}}(x_{0})\right)} \lambda \leq \frac{2}{\mvbd \left( B_{2 r_{j+1}}(x_{0})\right)} \int_{B_{2 r_{j+1}(x_{0})}}\left|\Theta-(\Theta)_{B_{2 r_{j+1}}(x_{0}) , \mvbd }\right| \, \dmvbd . 
\end{equation}

Applying Corollary \ref{h65} with 
$$
r=2r_{j+1}, \quad  R=r_j, \quad a=\frac{\omega(r_{j})}{2}, \quad d=\epsilon \omega(r_{j}), \quad \text { and } \quad b=e^{\lambda},
$$ 
and using \eqref{122},   we obtain 
$$
d^{p^- - \pl} r_{j+1}^{(1-s^+)\pl} \leq 1,
$$
and
\begin{equation}\label{115}
\begin{aligned}
&   \fint _{B_{2 r_{j+1}}(x_{0})}\left|\Theta-(\Theta)_{B_{2 r_{j+1}}(x_{0}) , \mvbd }\right|^{\pl} \, \dmvbd \\[5pt]
&   \qquad  \leq C_1\left\{\left[ \epsilon \omega(r_{j}) \right]^{1-\pl}\left(\frac{r_{j+1}}{r_{j}}\right)^{\pl} r_j^{\pl}\tl(u_{j} ; x_{0}, r_{j} , 3r_{j+1})+1 \right\},
\end{aligned} 
\end{equation}
provided $0\notin \overline{R_{3R}(x_0)}$, where $C_1=C_1({\sup _{y \in \Omega} |y|} , R ,  s^- , s^+ , p^- , \pl, \beta , \Lambda , n)>0$. 

In the case $x_0=0$, we obtain
\begin{equation}\label{116}
\begin{aligned}
&   \fint _{B_{2 r_{j+1}}(x_{0})}\left|\Theta-(\Theta)_{B_{2 r_{j+1}}(x_{0}) , \mvbd }\right|^{\pl} \, \dmvbd \\[5pt]
&   \qquad  \leq C_2\left\{  \left[ \epsilon \omega(r_{j}) \right] ^{1-\pl}\left(\frac{r_{j+1}}{r_{j}}\right)^{\pl} r_j^{\pl+\beta}\tl(u_{j} ; x_{0}, r_{j} , 3r_{j+1})+1 \right\} ,
\end{aligned}
\end{equation}
where $C_2=C_2(s^- , s^+ , p^- , \pl, \beta , \Lambda , n)>0$.

Following the reasoning of estimate (5.6) in  \cite[pp. 1294 - 1295]{di2016localfract}, we claim that
\begin{gather}
r_j ^{\pl}\tl\left(u_{j} ; x_{0}, r_{j} , \ell \right) \leq C_3\left[  \eta^{-\alpha(\pl-1)} \omega(r_{j})^{\pl-1} + r_j^{(1-s^+)\pl} \right] \quad \text { if } \quad 0 \notin \overline{B_{3R} (x_0)}, \label{119} \\[5pt]
r_j ^{\pl + \beta}\tl\left(u_{j} ; x_{0}, r_{j} , \ell \right) \leq C_4  \left[ \eta^{-\alpha(\pl-1)} \omega(r_{j})^{\pl-1} + r_j^{(1-s^+)\pl} \right] \quad \text { if } \quad x_0=0, \label{120}
\end{gather}
with $ 3r_{j+1} \leq \ell \leq r/2$, where $C_3=C_3(   {\sup _{y \in \Omega} |y|} , R ,  s^{-}, s^{+}, p^{-}, \pl , \beta, n)>0$ and $C_4 = C_4 (s^{-}, s^{+}, p^{-}, \pl, \beta, n)>0$.

\medskip

{\bf Now we  prove statements \eqref{119} and \eqref{120}, which appear below between \eqref{113} and \eqref{121}:}

\medskip

\begin{equation}\label{113}
\begin{aligned}
 r_j^{\pl}\tl (u_j ; x_0, r_j , \ell) \leq & \, \displaystyle r_j^{\pl}  \sum_{i=1}^j \int_{B_{i-1} \setminus B_i} \frac{|u_j(x)|^{\pl -1} + 1 }{|x-x_0|^{n+s^+ \pl } } \cdot \frac{1}{|x|^\beta} \, \dxe \\[5pt]
& \, \displaystyle   + \underset{y\in B_{\ell} (x_0)}{\esssup}      {\Bigg\{}  r_j^{\pl} \int_{\Omega \setminus B_0}   
 \frac{|u_j(x)|^{p(x,y) -1}}{|x-x_0|^{n+s(x,y) p(x,y) }} \cdot \frac{1}{|x|^\beta}\, \dxe      {\Bigg\}}.
\end{aligned}
\end{equation}

If $0\notin \overline{B_{3R}(x_0)}$, then $2R<|x|$ in $B_R(x_0)$. By \eqref{107}, we have
\begin{equation}\label{108}
\begin{aligned}
 r_j^{\pl}\int_{B_{i-1} \setminus B_i} \frac{|u_j(x)|^{\pl -1} +1}{|x-x_0|^{n+s^{+} \pl}} \frac{1}{|x|^\beta} \, \dxe \leq & \, \displaystyle  \frac{|     {\partial B_1(0)}|2^{\pl -1}}{s^+ \pl(2R)^\beta} \omega (r_{i-1})^{\pl  - 1 } \left( \frac{r_j}{r_{i}}\right)^{\pl} r_i ^{\pl}r_{i}^{-s^+\pl}\\[5pt]
& \, \displaystyle   +\frac{|     {\partial B_1(0)}|}{s^+ \pl (2R)^\beta}  r_j ^{\pl}  \eta ^{-s^+ \pl} r_{i-1}^{-s^+\pl},
\end{aligned}
\end{equation}
due to 
$$
r_i ^{-s^+ \pl - \beta} - r_{i-1} ^{-s^+\pl - \beta} = \frac{1-\eta ^{s^+ \pl + \beta}}{\eta ^{s^+ \pl + \beta}}r_{i-1} ^{-s^+\pl - \beta}.
$$

Furthermore,  from \eqref{102} and \eqref{136}, we have
\begin{equation}\label{109}
\begin{aligned}
&r_j^{\pl } \int_{\Omega \setminus B_0} \frac{|u_j(x)|^{p(x,y)-1}}{|x-x_0|^{n+s(x,y)p(x,y) }} \cdot \frac{1}{|x|^\beta} \, \dxe \\[5pt]
& \, \displaystyle \qquad \begin{aligned}
 \leq & \, \displaystyle  3^{\pl-1}r_j^{\pl } \int_{\Omega \setminus B_0}  \frac{|u|^{p(x,y)-1}}{|x-x_0|^{n+s(x,y) p(x,y)} }  \cdot \frac{1}{|x|^\beta}  + \frac{\omega (r_j) ^{p(x,y)-1}+\left[\omega (r_0)/2 \right] ^{p(x,y)-1}}{|x-x_0|^{n+s(x,y) p(x,y) } } \cdot \frac{1}{|x|^\beta} \, \dxe\\[7pt]
 \leq & \, \displaystyle  3^{\pl-1}r_j^{\pl } \left(r^{-\frac{\pl}{\pl -1}}\frac{1}{2}\omega (r_0)\right)^{\pl -1}  + 3^{\pl-1}r_j^{\pl } \left[\omega(r_j)^{\pl -1}+ \left(\frac{1}{2}\omega (r_0)\right)^{\pl-1} +2\right]\\[5pt]
& \, \displaystyle  \cdot \int_{\mathbb{R}^n \setminus B_0}   \left( \frac{1}{|x-x_0|^{n+s^- p^-}}+\frac{1}{|x-x_0|^{n+s^+ \pl}} \right) \frac{1}{|x|^\beta} \, \dxe  \qquad \text { if } \qquad y\in B_{\ell} (x_0) .
\end{aligned}
\end{aligned}
\end{equation}

By employing the same argument as in \eqref{105}, we obtain, for \eqref{109},
\begin{equation}\label{110}
\begin{aligned}
 r_j^{\pl } \int_{\Omega \setminus B_0} \frac{|u_j(x)|^{p(x,y)-1}}{|x-x_0|^{n+s(x,y)p(x,y) }} \cdot \frac{1}{|x|^\beta} \, \dxe  \leq & \, \displaystyle  C_5  \left( \frac{r_j}{r_1}\right)^{\pl} r_1 ^{\pl}r_0^{ -\pl} \omega (r_0)^{\pl-1}\\[5pt]
& \, \displaystyle   + C_5\left( \frac{r_j}{r_1}\right)^{\pl} r_1 ^{\pl}r_0^{ -s^+\pl} \left[\omega(r_0)^{\pl -1} +1\right] 
\end{aligned}
\end{equation}
where $C_5= C_5 ({\sup _{y \in \Omega} |y|}  , R ,  s^- , s^+ , p^- , \pl , \beta , n )>0$.

Hence, from \eqref{113}, \eqref{108}, and \eqref{110}, we have
\begin{equation} \label{188}
r_j^{\pl } \tl (u_j ; x_0, r_j, \ell )\leq C_6 \sum _{i=1}^j \left[ \left(\frac{r_j}{r_i} \right)^{\pl} \omega (r_{i-1}) ^{\pl -1} + r_j^{\pl} \eta ^{-s^+ \pl} r_{i-1}^{-s^{+} \pl} \right] \quad \text { if } \quad 0\notin \overline{B_{3R}(x_0)},
\end{equation}
where $C_6 = C_6 (  {\sup _{y \in \Omega} |y|} , R , s^- , s^+ , p^- , \pl , \beta , n )>0$.

On the other hand, following a similar argument to the one used to obtain \eqref{188}, we get
\begin{equation}\label{114}
r_j^{\pl +\beta} \tl (u_j ; x_0, r_j, \ell)\leq C_7\sum _{i=1}^j \left[ \left(\frac{r_j}{r_i} \right)^{\pl} \omega (r_{i-1}) ^{\pl -1} + r_j^{\pl +\beta} \eta ^{-s^{+} \pl-\beta} r_{i-1}^{-s^{+} \pl-\beta} \right] \quad \text { if } \quad x_0 = 0,
\end{equation}
where $C_7= C_7 (s^- , s^+ , p^- , \pl , \beta , n )>0$.

We  now proceed to estimate the right-hand side of \eqref{114}:
\begin{equation}\label{117}
\begin{aligned}
  \sum _{i=1}^j  r_j^{\pl +\beta} r_{i-1}^{-s^{+} \pl-\beta} = & \, \displaystyle r_j^{\pl +\beta} r_{0}^{-(s^{+} \pl+\beta)} \frac{\eta ^{-j(s^{+} \pl+\beta)} -1}{\eta ^{-(s^{+} \pl+\beta)}-1}\\[5pt]
 = & \, \displaystyle  r_j^{\pl +\beta} r_{j-1}^{-(s^{+} \pl+\beta)} \frac{1-\eta ^{j(s^{+} \pl+\beta)} }{1- \eta ^{s^{+} \pl+\beta}} \\[7pt]
 \leq & \, \displaystyle \frac{4^{s^{+} \pl+\beta}}{(\ln 4) (s^{+} \pl+\beta)} \eta ^{s^+ \pl + \beta} r_{j}^{(1-s^+)\pl}.
\end{aligned}
\end{equation}
Here, we use the inequality $(1-\eta^t) \sigma ^t \geq \sigma ^t -1\geq  (\ln \sigma ) t$, which holds if $0<\eta \leq  1/\sigma \leq 1$ and $t\geq 0$.

We also have
\begin{equation}\label{118}
\begin{aligned}
 \displaystyle \sum _{i=1}^j  \left(\frac{r_j}{r_i} \right)^{\pl} & \omega (r_{i-1}) ^{\pl -1}  \\[5pt]
  = & \, \displaystyle  \omega(r_0)^{\pl-1} \left(\frac{r_j}{r_0} \right)^{\alpha(\pl-1)} \sum_{i=1}^j \left(\frac{r_{i-1}}{r_i} \right)^{\alpha(\pl-1)}\left(\frac{r_j}{r_i}\right)^{\pl -\alpha(\pl-1)} \\[5pt]
 = & \, \displaystyle  \omega(r_j) ^{\pl-1} \eta ^{-\alpha(\pl-1)} \sum_{i=0}^{j-1} \eta^{i[ \pl-\alpha(\pl-1)]} \\[5pt]
 \leq & \, \displaystyle  \omega(r_j)^{\pl-1} \frac{\eta ^{-\alpha(\pl-1)}}{1-\eta^{\pl-\alpha(\pl-1)}} \\[5pt]
 \leq & \, \displaystyle  \frac{4^{ \pl -\alpha(\pl -1)}}{(\ln 4)[ \pl -\alpha(\pl -1)]} \eta ^{-\alpha(\pl-1)} \omega(r_j)^{\pl-1}\\[5pt]
 \leq & \, \displaystyle  \frac{4^{ \pl }}{(\ln 4)( \pl -1)} \eta ^{-\alpha(\pl-1)} \omega(r_j)^{\pl-1}.
\end{aligned}
\end{equation}

From \eqref{114}, \eqref{117}, and \eqref{118}, we conclude the proof of \eqref{120}, namely,
\begin{equation} \label{121}
r_j ^{\pl + \beta}\tl\left(u_{j} ; x_{0}, r_{j} , \ell\right) \leq C_4 \left[\eta^{-\alpha(\pl -1)} \omega(r_{j})^{\pl-1} + r_j^{(1-s^+)\pl} \right] \quad \text { if } \quad x_0=0. 
\end{equation}
Similarly, we prove \eqref{119}.

\medskip

{\bf We now  proceed to complete Step 1:}

\medskip

 By \eqref{115} and \eqref{119}, if $0\notin \overline{B_{3R}(x_0)}$, then
\begin{equation}\label{126}
\begin{aligned}
&   \fint _{B_{2 r_{j+1}}(x_{0})}\left|\Theta-(\Theta)_{B_{2 r_{j+1}}(x_{0}) , \mvbd }\right|^{\pl} \, \dmvbd \\[5pt]
&   \qquad \begin{aligned} 
  \leq & \, \displaystyle  C_1 \left\{ C_3 \left[ \epsilon \omega(r_{j})  \right] ^{1-\pl}\left(\frac{r_{j+1}}{r_{j}}\right)^{\pl} \left[ \eta^{-\alpha(\pl-1)} \omega(r_{j})^{\pl-1} + r_j^{(1-s^+)\pl} \right]       {+1} \right\}\\[7pt]
 \leq & \, \displaystyle  C_1\left(2C_3 +      {1} \right),
 \end{aligned}
\end{aligned}
\end{equation}
due to  \eqref{124}.

Similarly, from \eqref{116} and \eqref{120}, we obtain
\begin{equation}\label{125}
\begin{aligned}
&   \fint _{B_{2 r_{j+1}}(x_{0})}\left|\Theta-(\Theta)_{B_{2 r_{j+1}}(x_{0}) , \mvbd }\right|^{\pl} \, \dmvbd \\[5pt]
&   \qquad \begin{aligned} 
 \leq & \, \displaystyle  C_2 \left\{ C_4 \left[ \epsilon \omega(r_{j}) \right] ^{1-\pl}\left(\frac{r_{j+1}}{r_{j}}\right)^{\pl} \left[ \eta^{-\alpha(\pl-1)} \omega(r_{j})^{\pl-1} + r_j^{(1-s^+)\pl} \right]       {+ 1} \right\}\\[7pt]
 \leq & \, \displaystyle  C_2\left(2C_4 +      {1} \right) \quad \text { if } \quad x_0=0.
\end{aligned}
\end{aligned}
\end{equation}

Using these two inequalities, together with \eqref{127}, \eqref{128}, \eqref{115}, and \eqref{116}, we obtain \eqref{123}.

\medskip


{\bf Step 2.} We now  use an iteration argument to obtain \eqref{98} for $i=j+1$. To this end, for every $i=0,1,2, \ldots$, let 
$$
\rho_i := \left(1 + 2^{-i}\right) r_{j+1}, \quad \hat{\rho}_i := \frac{\rho_i + \rho_{i+1}}{2}, \quad B^i := B_{\rho_i}(x_0), \quad \hat{B}^i := B_{\hat{\rho}_i}(x_0).
$$

 Recalling that $\epsilon=\eta^{\frac{\pl}{\pl-1}-\alpha}$, define 
 $$
 k_{i} :=(1+2^{-i}) \epsilon \omega(r_{j}) \quad \text { and } \quad
A^{i}:=B^{i} \cap\{u_{j} \leq k_{i}\}, \quad i=0,1,2, \ldots .
$$

Let 
$$
w_{i}:=(k_{i}-u_{j})_{+}.
$$ 

Consider a sequence $(\psi_{i})_{i=0}^{\infty} \subset C_{c}^{\infty}(\hat{B}^{i})$  such that 
$$
0 \leq \psi_{i} \leq 1 \text { in } \hat{B}^{i}, \quad \psi_{i}=1 \text {  in  } B^{i+1},  \quad \text {  and } \quad |\nabla \psi_{i}| \leq \frac{C_8 2^{i}}{\rho_{i}}\leq \frac{C_82^i}{r_{j+1}} \text {  in } \hat{B}^{i},
$$
 where $C_8=C_8(n)>1$. 

Applying Corollary \ref{70}, we obtain
\begin{equation} \label{130}
\begin{aligned}
 (k_{i}-k_{i+1})^{\pl} \left(\frac{\mvbd \left( A^{i+1}\right)}{\mvbd \left(  B^{i+1}\right)}\right)^{\frac{1}{\kappa}}  \leq & \, \displaystyle \left(\fint_{B^{i+1}} w_{i}^{\kappa \pl} \, \dmvbd \right)^{\frac{1}{\kappa}} \leq C_9 \left(\fint _{B^{i}} w_{i}^{\kappa \pl} \psi_{i}^{\kappa \pl} \, \dmvbd \right)^{\frac{1}{\kappa}}  \\[5pt]
\leq & \, \displaystyle C_{10} \rho_{i}^{\pl} \fint_{B^{i}}\left|\nabla(w_{i} \psi_{i})\right|^{\pl} \, \dmvbd \\[7pt]
\leq & \, \displaystyle 2^{2\pl -1} C _{10} (I+J),
\end{aligned}
\end{equation}
where $C_{h}=C_{h}(\beta , \pl , n)>0$ for  $h=9,10$, and 
$$
I:=r_{j+1}^{\pl} \fint _{B^{i}} w_{i}^{\pl}|\nabla \psi_{i}|^{\pl} \, \dmvbd \quad \text { and } \quad J:= r_{j+1}^{\pl} \fint_{B^{i}}|\nabla w_{i}|^{\pl} \psi_{i}^{\pl} \, \dmvbd .
$$

\medskip

{\bf Estimate of $I$:} Since $u_{j} \geq 0$ in $B_{j}$, we have
\begin{equation}\label{131}
w_{i} \leq k_{i} \leq 2 \epsilon \omega(r_{j}) \quad \text {  in  } \quad B_j,
\end{equation}
and  $ B^{i} \subset B_j$. Thus, 
\begin{equation}\label{132}
I=r_{j+1}^{\pl}\fint_{B^{i}} w_{i}^{\pl}|\nabla \psi_{i}|^{\pl} \, \dmvbd \leq 2^{\pl} C_8 ^{\pl} \left[ \epsilon \omega(r_{j}) \right]^{\pl} 2^{i \pl} \frac{\mvbd \left( A^{i}\right)}{\mvbd \left( B^{i}\right)} .
\end{equation}

\medskip

{\bf Estimate of $J$:} By Lemma \ref{73}, there exists a constant $C_{11}=C_{11}(p^-, \pl , \Lambda)>0$ such that
\begin{equation}\label{133}
\mvbd (B^i)J =r_{j+1}^{\pl}\int_{B^{i}}|\nabla w_{i}|^{\pl} \psi_{i}^{\pl} \, \dmvbd \leq C_{11}(J_{1}+J_{2}+J_{3}),
\end{equation}
where
\begin{gather*}
J_{1}:=r_{j+1}^{\pl} \int_{B^{i}} w_{i}^{\pl}|\nabla \psi_{i}|^{\pl} \, \dmvbd , \\
 J_{2}:=r_{j+1}^{\pl}\int_{B^{i}} \int_{B^{i}} \max \{w_{i}(x), w_{i}(y)\}^{p(x,y)} \frac{|\psi_{i}(x)-\psi_{i}(y)|^{p(x,y)}}{|x-y|^{n+s(x,y)p(x,y)}} \cdot \frac{1}{|x|^\beta |y|^\beta}\, \dxe \dye 
\end{gather*}
and
$$
J_{3}:=\underset{x \in \hat{B}^{i}}{\operatorname{ess} \sup }      {\Bigg\{} r_{j+1}^{\pl}\int_{\Omega \setminus B^{i}} \frac{w_{i}(y)^{p(x,y)-1}}{|x-y|^{n+s(x,y)p(x,y)}} \frac{1}{|y|^\beta}\, \dye \cdot \int_{B^{i}} |x|^\beta w_{i} \psi_{i}^{\pl} \, \dmvbd (x)     {\Bigg\}}.
$$
Here, note that 
$$
|x| \leq \rho_i + |x_0| \leq 2r_{j+1} + |x_0| \quad \text { in } \quad B^i.
$$

From \eqref{132}, we get
\begin{equation}\label{134}
J_{1} \leq C_{12} (\pl ,n)  \left[ \epsilon \omega(r_{j})\right]^{\pl} 2^{i \pl} \mvbd \left(  A^{i}\right).
\end{equation}

\medskip

{\it Next, we  estimate $J_2$:}

\medskip

Assume that $0\notin \overline{B_{3R}(x_0)}$. Using \eqref{131} and the inequalities
\begin{gather*}
     {\max \{  |a| ,   |b| \} \leq |a|+ |b| \quad \text { for  } \quad a,b\in \mathbb{R},}\\[5pt]
     {  |a|^{q_2}\leq  |a|^{q_1}+ |a|^{q_3} \quad \text { for  } \quad a\in \mathbb{R}, 0<q_1\leq q_2 \leq q_3,}\\[5pt]
\left|\psi _i (x)- \psi _i(y)\right|\leq \left(\sup _{\hat B^i} \left|\nabla \psi _i\right| \right)|x-y| \quad \text { in } \quad B^i,
\end{gather*}
 we obtain
\begin{align}
J_2\leq & \, \displaystyle  r_{j+1}^{\pl}\int_{B^{i}} \int_{B^{i}}  \bigl( w_{i}(x) +  w_{i}(y)\bigr)^{p(x,y)}\frac{|\psi_{i}(x)-\psi_{i}(y)|^{p(x,y)}}{|x-y|^{n+s(x,y)p(x,y)}}  \cdot \frac{1}{|x|^\beta |y|^\beta}\, \dxe \dye \nonumber\\[5pt]
\leq & \, \displaystyle      {2^{\pl} r_{j+1}^{\pl}\int_{B^{i}} \int_{B^{i}}   w_{i}(y)^{p(x,y)}\frac{|\psi_{i}(x)-\psi_{i}(y)|^{p(x,y)}}{|x-y|^{n+s(x,y)p(x,y)}}  \cdot \frac{1}{|x|^\beta |y|^\beta}\, \dxe \dye} \nonumber\\[5pt]
 \leq & \, \displaystyle  \frac{ 2 ^{     {\pl}}C_8 ^{\pl} 2^{i \pl} }{(2R)^{2\beta}}  \int_{B^i \cap\left\{u_j \leq k_i\right\}} \int_{B^i} \frac{ [2\epsilon \omega (r_j)]^{p^-} + [2\epsilon \omega (r_j)]^{\pl}}{|x-y|^{n-(1-s^+) p^-}} \, \dxe \dye\nonumber \\[7pt]
 \leq & \, \displaystyle  C_{13}2^{i \pl}  r_{j+1}^{(1-s^+)p^-} \Big\{ [2\epsilon \omega (r_j)]^{p^-} + [2\epsilon \omega (r_j)]^{\pl} \Big\}\mvbd \left(B^i \cap\left\{u_j \leq k_i\right\}\right), \label{135}
\end{align}
since $B^i \subset B_{2\rho _i}(y)$ for  all $y\in B^i$. Here, $C_{13} = C_{13} (R , s^+ , p^- ,\pl , \beta , n  ) >0$.

In the case \(x_0 = 0\), we obtain a similar estimate to that in \eqref{135}, using the argument from \eqref{74}.


\medskip

{\it Now we estimate $J_3$.}

\medskip

Observe that
$$
\underset{x \in \hat{B}^{i}}{\esssup}       {\Bigg\{} r_{j+1}^{\pl}\int_{\Omega \setminus B^{i}} \frac{w_{i}(y)^{p(x,y)-1}}{|x-y|^{n+s(x,y)p(x,y)}} \cdot \frac{1}{|y|^\beta}\, \dye       {\Bigg\}} \leq  2^{(i+4)(n+s^+ \pl )}r_{j+1}^{\pl} \tl\left(w_i ; x_0, r_{j+1} ,3r_{j+1}\right)
$$
which follows from the inequality
\begin{align*}
\frac{1}{|y-x|}  =& \, \displaystyle \frac{1}{|y-x_{0}|} \cdot \frac{|y-x_{0}|}{|y-x|} \leq \frac{1}{|y-x_{0}|}\left(1+\frac{|x-x_{0}|}{|y-x|}\right)  \\[5pt]
 \leq & \, \displaystyle \frac{1}{|y-x_{0}|} \left(1+\frac{\hat{\rho}_{i}}{\rho_{i}-\hat{\rho}_{i}} \right) \leq \frac{2^{i+4}}{|y-x_{0}|},
\end{align*}
where $x \in \hat{B}^{i}$ and $y \in \mathbb{R}^{n} \setminus B^{i}$, and from the facts that $B_{r_{j+1}} (x_0) = B_{j+1} \subset B^i $ and $\hat B ^i \subset B_{3r_{j+1}} (x_0) $.

Recalling  \eqref{120} and \eqref{131}, together with the fact that 
$$
B_{3r_{j+1}} (x_0)\subset B_{r_j} (x_0) \quad
 \text { and } \quad w_i \leq |u_j| + 2\epsilon \omega(r_j) \text {  in } \Omega,
$$
 we further obtain, if $x_0=0$:
$$
\begin{aligned}
r_{j+1}^{\pl} & \tl\left(w_i ; x_0, r_{j+1} , 3r_{j+1} \right) \\[5pt]
  \leq & \, \displaystyle   \underset{y\in B_{r_{j}} (x_0)}{\esssup}      {\Bigg\{} r_{j+1}^{\pl} \int_{B_j \setminus B_{j+1}}   \frac{w_i(x) ^{p(x,y)-1}}{|x-x_0|^{n+s(x,y) p(x,y)}}  \cdot \frac{1}{|x|^\beta} \, \dxe      {\Bigg\}} \\[5pt]
& \, \displaystyle  + \underset{y\in B_{r_{j}} (x_0)}{\esssup}      {\Bigg\{} 2^{\pl -1} r_{j+1}^{\pl} \int_{\Omega \setminus B_{j}}   \frac{[2\epsilon\omega (r_j)] ^{p(x,y)-1}}{|x-x_0|^{n+s(x,y) p(x,y)}} \cdot \frac{1}{|x|^\beta} \, \dxe      {\Bigg\}}  + 2^{\pl -1} r_{j+1}^{\pl} \tl \left(u_i ; x_0, r_j , 3r_{j+1} \right)  \\[7pt]
 \leq & \, \displaystyle  C_{14}(s^-, s^+ , p^-, \pl , \beta ,n)  r_{j+1}^{\pl}\left\{ [ \epsilon \omega(r_j)]^{\pl-1}+  [ \epsilon \omega(r_j)]^{p^--1} \right\}\\[5pt]
& \, \displaystyle   \cdot \left[r_{j+1} ^{-s^+\pl - \beta} + \int_{\Omega \setminus B_j} \left(\frac{1}{|x-x_0|^{n+s^- p^- }} + \frac{1}{|x-x_0|^{n+s^+ \pl  }} \right) \frac{1}{|x|^\beta}\, \dxe \right]\\[5pt]
& \, \displaystyle   + C_{14}r_{j+1}^{\pl} r_j ^{-\pl-\beta}\left\{\eta ^{-\pl} \left[ \epsilon \omega(r_j)\right]^{\pl -1}+r_j^{(1-s^{+}) \pl } \right\}\\[7pt]
  \leq & \, \displaystyle  C_{15}(s^-, s^+ , p^-, \pl , \beta ,n) r_{j+1}^{-\beta} \left\{ \left[ \epsilon \omega(r_j)\right]^{\pl -1} + [ \epsilon \omega(r_j)]^{p^--1} r_{j+1} ^{(1-s^+)\pl}+ \left( \frac{r_{j+1}}{r_j}\right)^{\pl}r_j^{(1-s^+)\pl}\right\}.
  \end{aligned}
$$

Hence, employing also \eqref{138}, if $x_0 = 0$, we have:
\begin{equation}\label{137}
\begin{aligned}
J_3 \leq & \, \displaystyle  C_{15}2^{(i+4)(n+ s^+\pl ) +1} r_{j+1}^{-\beta} (2r_{j+1})^{\beta} \left\{ \left[ \epsilon \omega(r_j)\right]^{\pl -1} + \left( \frac{r_{j+1}}{r_j}\right)^{\pl}r_j^{(1-s^+)\pl}\right\}\int_{B^i} w_i \psi_i^{\pl } \, \dmvbd\\[5pt]
 \leq & \, \displaystyle  C_{15} 2^{(i+4)(n+ s^+\pl ) +1} \left\{ \left[ \epsilon \omega(r_j)\right]^{\pl} + \left( \frac{r_{j+1}}{r_j}\right)^{\pl}r_j^{(1-s^+)\pl} \left[ \epsilon \omega(r_j)\right]\right\} \mvbd\left(B^i \cap\left\{u_j \leq k_i\right\}\right).
\end{aligned}
\end{equation}
For the case $0 \notin \overline{B_{3R}(x_0)}$, using the estimates in \eqref{105} and \eqref{119}, we obtain a result similar to \eqref{137}.

\medskip

{\bf We now proceed to complete the induction argument, and thus conclude the proof of this lemma.}

\medskip

Recalling    from \eqref{124} and \eqref{138} that
$$
\left[\epsilon \omega (r_j)\right]^{1-\pl }\left(\frac{r_{j+1}}{r_j}\right)^{\pl } r_j^{\left(1-s^{+}\right) \pl }\leq 1 \quad \text { and } \quad \left[\epsilon \omega (r_j)\right]^{p^- -\pl }r_{j+1}^{\left(1-s^{+}\right) p^{-}}\leq 1,
$$
then by \eqref{134}, \eqref{135}, and \eqref{137}, we conclude that 
\begin{equation}\label{139}
J=r^{\pl}_{j+1} \fint_{B^{i}}|\nabla w_{i}|^{\pl} \psi_{i}^{\pl} \, \dmvbd \leq C_{16} \left[\epsilon \omega(r_{j}) \right]^{\pl} 2^{i(n+\pl)} \frac{\mvbd \left( A^{i}\right)}{\mvbd\left( B^{i}\right)} ,
\end{equation}
where $C_{16} = C_{16} (  {\sup _{y \in \Omega} |y|}  , R, s^- ,s^{+}, p^{-}, \pl , \beta, \Lambda , n)>0$.

Therefore, from \eqref{130}, \eqref{132}, and \eqref{139}, we get
\begin{equation}
\left(k_{i}-k_{i+1}\right)^{\pl} \left(\frac{\mvbd \left( A^{i+1}\right)}{\mvbd \left(  B^{i+1}\right)}\right)^{\frac{1}{\kappa}} \leq C_{17}  2^{i(n+\pl)} \left[\epsilon \omega(r_{j}) \right]^{\pl} \frac{\mvbd \left( A^{i}\right)}{\mvbd\left( B^{i}\right)} 
\end{equation}
where $C_{17} = C_{17} (  {\sup _{y \in \Omega} |y|}  , R, s^- , s^{+}, p^{-}, \pl , \beta, \Lambda , n)>0$.

Let
$$
Y_{i}=\frac{\mvbd \left( A^{i}\right)}{\mvbd\left( B^{i}\right)} , \quad i=0,1,2, \ldots
$$

Noting that $k_{i}-k_{i+1}=2^{-i-1} \epsilon \omega(r_{j})$ , we get
$$
Y_{i+1} \leq 2^{\pl}C_{17} 2^{i(2 \pl+n) \kappa} Y_{i}^{\kappa}
$$

 By \eqref{123} in {\bf Step 1}, we have
$$
Y_{0} \leq \frac{\hat{C} \left(  {\sup _{y \in \Omega} |y|}  , R, s^{-}, s^{+}, p^{-}, \pl , \beta , \Lambda , n \right)}{\ln \frac{1}{\eta}}.
$$

 Let
$$
c_{0}=2^{\pl}C_{17}, \quad b=2^{(2 \pl+n) \kappa}, \quad \gamma=\kappa - 1 \quad \text { and } \quad \eta_{1}=c_{0}^{-\frac{1}{\gamma}} b^{-\frac{1}{\gamma^{2}}}
$$

By choosing $\eta=\frac{1}{2} \min \{\frac{1}{4}, e^{-\frac{\hat{C}}{\eta_{1}}}\}$, we ensure $Y_{0} \leq \eta_{1}$. Thus,  by Lemma \ref{84}, we deduce that $\lim _{i \rightarrow \infty} Y_{i}=0$, and therefore, $u_{j} \geq \epsilon \omega(r_{j})$ in $B_{j+1}$. Using the definition of $u_{j}$ from \eqref{140}, we obtain
\begin{equation}\label{141}
\underset{B_{j+1}}{\osc} \ u \leq(1-\epsilon) \omega(r_{j})=(1-\epsilon) \eta^{-\alpha} \omega(r_{j+1}) \leq \omega(r_{j+1}), 
\end{equation}
where we have chosen $\alpha \in (0, \frac{1-s^{+}}{\pl-1} ]$  small enough such that
$$
\eta^{\alpha} \geq 1-\epsilon=1-\eta^{\frac{\pl}{\pl-1}-\alpha}.
$$

Thus, inequality  \eqref{141} proves the induction estimate \eqref{98} for $i=j+1$. Hence, the result follows.\end{proof}


\vspace{1.5cm}

\noindent{\bf {Acknowledgment:}}  {I would like to express my sincere gratitude to the referee for the helpful comments and valuable suggestions, which have significantly improved the quality of this article.}

\noindent {\bf Author Contributions:} All authors contributed equally to the writing, review, and editing of this work. All authors have read and approved the final version of the manuscript.

\noindent {\bf Funding:} This work was supported by CNPq - Conselho Nacional de Desenvolvimento Científico e Tecnológico, Grant 153232/2024-2.

\noindent {\bf Data Availability:} No data were used in the research described in this article.

\noindent {\bf Declarations}

\noindent {\bf Conflict of Interest:} The authors declare that there is no conflict of interest.

\noindent {\bf Declaration of Generative AI and AI-Assisted Technologies in the Writing Process:} During the preparation of this work, the author(s) used ChatGPT and Grok-3 to assist with grammar and spelling correction. After using these tools, the author(s) reviewed and edited the content as needed and take full responsibility for the final version of the manuscript.


\bibliographystyle{abbrv}
    \bibliography{ref}

\end{document}